\newcommand{\pg}{\mathop {\rm PG}}
\newcommand{\ag}{\mathop {\rm AG}}
\newcommand{\st}{\mathop {\rm star}}
\newcommand{\lne}{\mathop {\rm line}}
\newcommand{\pen}{\mathop {\rm pencil}}
\newcommand{\rmB}{\mathop {\rm B}}
\newcommand{\rmN}{\mathop {\rm N}}
\newcommand{\rmQ}{\mathop {\rm Q}}
\newcommand{\rmT}{\mathop {\rm T}}
\newcommand{\rmi}{\mathrm{i}}
\newcommand{\bbC}{\mathbb{C}}
\newcommand{\bbE}{\mathbb{E}}
\newcommand{\bbF}{\mathbb{F}}
\newcommand{\bbT}{\mathbb{T}}
\newcommand{\bbZ}{\mathbb{Z}}
\newcommand{\cK}{\mathcal{K}}
\newcommand{\cL}{\mathcal{L}}
\newcommand{\cO}{\mathcal{O}}
\newcommand{\cP}{\mathcal{P}}
\newcommand{\cQ}{\mathcal{Q}}
\newcommand{\cS}{\mathcal{S}}
\newcommand{\cT}{\mathcal{T}}
\newcommand{\inv}{^{-1}}
\theoremstyle{plain} \numberwithin{equation}{section}
\newtheorem{theorem}{Theorem}[section]
\newtheorem{corollary}[theorem]{Corollary}
\newtheorem{conjecture}{Conjecture}
\newtheorem{lemma}[theorem]{Lemma}
\theoremstyle{definition}
\newtheorem{definition}[theorem]{Definition}
\newtheorem{remark}[theorem]{Remark}
\newcommand\blfootnote[1]{%
  \begingroup
  \renewcommand\thefootnote{}\footnote{#1}%
  \addtocounter{footnote}{-1}%
  \endgroup
}
\begin{document}

\title{A new family of tight sets in $\cQ^{+}(5,q)$ \blfootnote{
J.~De Beule is a postdoctoral fellow of the Research Foundation Flanders -- Belgium (FWO). J.~Demeyer has been supported as a postdoctoral fellow of the Research Foundation Flanders -- Belgium (FWO). The research of M.~Rodgers has been supported partially by the FWO project "Moufang verzamelingen" G.0140.09
}}
\author[1]{Jan De Beule}
\author[1]{Jeroen Demeyer}
\author[1,2]{Klaus Metsch}
\author[1]{Morgan Rodgers }
\affil[1]{Ghent University\\
  Department of Mathematics\\
  Krijgslaan 281, S22\\
  B--9000 Gent\\
  Belgium\\
  jdebeule@cage.ugent.be\\
  jdmeyer@cage.ugent.be\\
  morgan.joaquin@gmail.com }
\affil[2]{ Universit\"at Gie\ss{}en \\
  Mathematisches Institut\\
  Arndtstra\ss{}e 2\\
  D--35392 Gie\ss{}en\\
  Germany\\
  klaus.metsch@math.uni-giessen.de}
\renewcommand\Authands{ and }

\maketitle

\begin{abstract}
In this paper, we describe a new infinite family of
$\frac{q^{2}-1}{2}$-tight sets in the hyperbolic quadrics
$\cQ^{+}(5,q)$, for $q \equiv 5 \mbox{ or } 9 \bmod{12}$.  Under the
Klein correspondence, these correspond to Cameron--Liebler line
classes of $\pg(3,q)$ having parameter $\frac{q^{2}-1}{2}$.  This is
the second known infinite family of nontrivial Cameron--Liebler line
classes, the first family having been described by Bruen and Drudge
with parameter $\frac{q^{2}+1}{2}$ in $\pg(3,q)$ for all odd $q$.

The study of Cameron--Liebler line classes is closely related to the
study of symmetric tactical decompositions of $\pg(3,q)$ (those having
the same number of point classes as line classes).  We show that our
new examples occur as line classes in such a tactical decomposition
when $q \equiv 9 \bmod 12$ (so $q = 3^{2e}$ for some positive integer
$e$), providing an infinite family of counterexamples to a conjecture
made by Cameron and Liebler in 1982; the nature of these
decompositions allows us to also prove the existence of a set of type
$\left(\frac{1}{2}(3^{2e}-3^{e}), \frac{1}{2}(3^{2e}+3^{e}) \right)$ in the affine
plane $\ag(2,3^{2e})$ for all positive integers $e$.  This proves a
conjecture made by Rodgers in his PhD thesis.

% \keywords{Tight sets \and Cameron-Liebler line classes \and Sets of type $(m,n)$ \and Tactical decompositions}

%  \subclass{51E20 \and 05B25}
\end{abstract}

\section{Introduction}
Let $q=p^h$, $p$ prime, $h \geq 1$, and let $\pg(d,q)$ denote the
$d$-dimensional projective space over the finite field $\bbF_q$. A
{\em spread} of $\pg(3,q)$ is a set $\cS$ of lines of $\pg(3,q)$ such
that every point of $\pg(3,q)$ is contained in exactly one line of
$\cS$, i.e. the set of lines of $\cS$ partitions the set of points of
$\pg(3,q)$. Spreads of $\pg(3,q)$ are well studied, and many families
of examples exist. 

Cameron and Liebler in \cite{CL1982} originally studied irreducible
collineation groups of $\pg(d,q)$ that have equally many point orbits
as line orbits.  This problem ended up being closely related to the
study of symmetric tactical decompositions (tactical decompositions
having the same number of point classes as line classes) of $\pg(d,q)$
(viewed as a point-line design), and this work was further generalized
to the study of line sets later termed Cameron--Liebler line classes
(the following definition is specific for $\pg(3,q)$, though these
objects are also defined in higher dimensional projective spaces).
\begin{definition}
Let $\bm{A}$ be the point-line adjacency matrix of $\pg(d,q)$.  A set
of lines $\cL$ is called a \emph{Cameron--Liebler line class} if the
characteristic vector $\bm{c}_{\cL}$ of $\cL$ lies in $\mathrm{row}(A)$.
\end{definition}
The connection between these three concepts is as follows; a
collineation group of $\pg(d,q)$ having equally many orbits on points
and lines induces a symmetric tactical decomposition on $\pg(d,q)$,
and any line class of such a tactical decomposition is a
Cameron--Liebler line class.  

Cameron--Liebler line classes of $\pg(3,q)$ have received considerable
attention, and this is the situation we will focus on in this work.
One important characterization here is that $\cL$ is a 
Cameron--Liebler line class of $\pg(3,q)$ if and only if any spread of
$\pg(3,q)$ contains exactly $x$ lines of $\cL$ for some integer $x$
called the \emph{parameter} of $\cL$.  A spread of $\pg(3,q)$ contains
$q^{2}+1$ lines, so $0 \leq x \leq q^2+1$.   Since any spread
partitions the set of points of $\pg(3,q)$, it is clear immediately
that the set of lines through a given point is a Cameron--Liebler line
class with parameter $1$. The same holds for its dual: the set of
lines in a given plane is a Cameron--Liebler line class with parameter
$1$. Note that the union of two disjoint Cameron--Liebler line
classes with respective parameters $x$ and $y$ is a Cameron--Liebler
line class with parameter $x+y$, and that the complement of a
Cameron--Liebler line class with parameter $x$ is a Cameron--Liebler
line class with parameter $q^2+1-x$. Hence the union of the set of
lines through a point and the set of lines in a plane not containing
that point is a Cameron--Liebler line class with parameter $2$. The
examples seen so far (including their complements) are called trivial
examples.  

In their work, Cameron and Liebler put forward the following
progressively weaker conjectures: 
\begin{conjecture}\label{conj:CL}
The only Cameron--Liebler line classes in $\pg(d,q)$ are the trivial
examples.
\end{conjecture}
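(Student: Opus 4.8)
The plan is to attack Conjecture~\ref{conj:CL} in the case $d=3$ through the algebraic-combinatorial structure attached to a Cameron--Liebler line class. First I would fix the characterization already recalled in the text: a set of lines $\cL$ is a Cameron--Liebler line class with parameter $x$ if and only if every spread of $\pg(3,q)$ meets $\cL$ in exactly $x$ lines, which is equivalent to saying that $\bm{c}_{\cL}$ lies in the row space of the point-line incidence matrix, i.e.\ in the sum of the all-ones eigenspace and one specific eigenspace of the Grassmann (line) scheme on $\pg(3,q)$. From this I would extract the standard local equations: for every line $\ell$ the number of lines of $\cL$ concurrent with $\ell$ is an affine function of $x$ and of the indicator $[\ell\in\cL]$, and for every antiflag $(P,\pi)$ the number of lines of $\cL$ through $P$ lying in $\pi$ is severely constrained. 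The goal would then be to show that these constraints force $x \in \{0,1,2,q^2-1,q^2,q^2+1\}$, the parameters realized by the trivial examples, and moreover that each admissible value is realized only by those examples.

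Concretely, the key steps I would carry out are: (i) derive the integrality and modular conditions on $x$ from the eigenvalue multiplicities; (ii) run a counting argument on the lines of $\cL$ through a fixed point and in a fixed plane to produce a two-intersection, tactical pattern; and (iii) attempt an exclusion argument in the style of the known low-parameter results, pushing the range of $x$ for which only trivial examples survive upward until it covers all of $[0,q^2+1]$. For small $x$ this program genuinely succeeds, and one would hope to bootstrap it by induction or by an averaging/positivity inequality applied to the local counts of step (ii).

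The step I expect to be the decisive obstacle is precisely step (iii), the exclusion of intermediate parameters, and here I must be candid: this is exactly where the conjecture breaks. The local equations and the divisibility conditions do not eliminate values of $x$ near the middle of the range, and in fact they remain consistent with genuine configurations. For $q$ odd the Bruen--Drudge construction already realizes the nontrivial parameter $\frac{q^2+1}{2}$, and the construction developed in the present paper realizes $\frac{q^2-1}{2}$ for $q \equiv 5$ or $9 \bmod 12$. Hence no proof along these lines can close, and the honest conclusion is that Conjecture~\ref{conj:CL} is \emph{false}: the correct outcome of the plan is a disproof by explicit construction rather than a classification. Accordingly, the productive reformulation is to abandon the attempt to exclude intermediate $x$ and instead to build the group orbit, respectively the symmetric tactical decomposition, that produces such a nontrivial class, which is exactly the line of attack the remainder of this paper pursues.
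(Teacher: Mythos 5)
Your conclusion is exactly right: this statement is a conjecture that the paper never proves --- it is false, having been disproved first by Drudge's parameter-$5$ example in $\pg(3,3)$ and by the Bruen--Drudge family with parameter $\frac{q^2+1}{2}$, and the paper's own contribution is precisely a new infinite family of counterexamples with parameter $\frac{q^2-1}{2}$ for $q \equiv 5, 9 \bmod{12}$. Your recognition that the exclusion step cannot close for intermediate parameters, and that the productive reformulation is disproof by explicit construction (via group orbits and symmetric tactical decompositions), is exactly the stance and the line of attack the paper takes.
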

\begin{conjecture}\label{conj:STD}
A symmetric tactical decomposition of $\pg(d,q)$ consists of either 
\begin{enumerate}[label=\rm{(\roman*)}, ref=\rm{(\roman*)}]
\item a single point and line class;
\item two point classes $\{ \bm{p}\}$, $\pg(d,q) \setminus \{ \bm{p}
  \}$ and two line classes $\st(\bm{p})$, $\st(\bm{p})^{C}$ for some
  point $\bm{p}$; or (dually)
\item two point classes $H$, $\pg(d,q) \setminus H$ and two line
  classes $\lne(H)$, $\lne(H)^{C}$ for some hyperplane $H$.
\end{enumerate}
\end{conjecture}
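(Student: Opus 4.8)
The abstract already announces that this conjecture is \emph{false} for $d=3$: the paper exhibits symmetric tactical decompositions of $\pg(3,q)$ whose line classes are nontrivial Cameron--Liebler line classes, i.e. decompositions outside the three types (i)--(iii). So rather than prove Conjecture \ref{conj:STD}, my plan is to refute it by constructing such a decomposition. The guiding principle is the chain of implications recalled in the introduction: every line class of a symmetric tactical decomposition is a Cameron--Liebler line class, so a decomposition in which one of the classes is a \emph{nontrivial} such line class is automatically a counterexample to each alternative in (i)--(iii).

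First I would construct the new Cameron--Liebler line class $\cL$ with parameter $x=\frac{q^{2}-1}{2}$ for $q\equiv 5$ or $9 \bmod 12$. Because the Klein correspondence sends Cameron--Liebler line classes of $\pg(3,q)$ to tight sets of $\cQ^{+}(5,q)$, I would work on the quadric side and build a $\frac{q^{2}-1}{2}$-tight set there, then verify the defining condition (equivalently, that every spread of $\pg(3,q)$ meets $\cL$ in exactly $x$ lines). The natural way to obtain both the set and an automorphism acting on it is to choose a subgroup of $\pgl$ or $\pGl$ stabilizing a suitable substructure --- one expects a configuration tied to the subfield factor $3^{e}$ when $q=3^{2e}$ --- and to realize $\cL$ as a union of orbits of this group.

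Next, specializing to $q\equiv 9 \bmod 12$, i.e. $q=3^{2e}$, I would promote $\cL$ to a full symmetric tactical decomposition. Concretely I would exhibit a collineation group $G$ of $\pg(3,q)$ whose orbits on points and on lines form a tactical decomposition, with $\cL$ and its complement among the line classes, then check two things: that $G$ has equally many point orbits as line orbits (symmetry), and that each point class meets each line class in a constant number of flags (the tactical condition). Since $\cL$ is nontrivial, the resulting decomposition matches none of (i), (ii), (iii), which is the desired counterexample; the affine set of type $\bigl(\frac{1}{2}(3^{2e}-3^{e}),\frac{1}{2}(3^{2e}+3^{e})\bigr)$ in $\ag(2,3^{2e})$ should then fall out of the combinatorial data attached to this decomposition.

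The main obstacle is the construction and verification of $\cL$ itself: guessing the correct group and the correct union of orbits so that the tight-set identity holds exactly, and then proving that identity. By comparison, the decomposition step is mostly bookkeeping once $G$ and $\cL$ are in hand. A secondary difficulty is the case distinction between $q\equiv 5$ and $q\equiv 9 \bmod 12$: only the latter is claimed to yield the symmetric tactical decomposition (and hence the refutation of Conjecture \ref{conj:STD}), so I would expect the group $G$ and its orbit structure to be available precisely when $q$ is a square power of $3$, whereas the former congruence yields the new tight set without necessarily producing a counterexample to the decomposition conjecture.
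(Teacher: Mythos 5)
You correctly read the situation: this statement is a conjecture that the paper \emph{refutes}, and your first step (building the Cameron--Liebler line class of parameter $\frac{q^2-1}{2}$ as a union of orbits of a group acting on the tight-set side of the Klein correspondence) is the paper's approach. The gap is in your second step. You propose to exhibit a collineation group $G$ of $\pg(3,q)$ whose point and line \emph{orbits} themselves form the symmetric tactical decomposition, with $\cL$ among the line classes, so that tacticality is automatic and only the orbit counts need checking. No such group is available, and the paper's counterexample is not of this form. The group $G=\langle c,z\rangle$ actually used has $6$ point orbits on $\pg(3,q)$ (the fixed point $\bm{p}$, the plane $\pi$, and four orbits of size $\frac{q-1}{4}(q^2+q+1)$, see Theorem~\ref{thm:orbits}) but $2+4(q+1)$ line orbits (these are the point orbits on $\cQ^{+}(5,q)$ of Lemma~\ref{GrpOrbs}), so its orbit decomposition, while tactical, is badly non-symmetric; moreover $\cL_{1}$ is a union of $2(q+1)$ line orbits, not a single one. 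The symmetric decomposition is obtained by \emph{merging}: four line classes $\st(\bm{p})$, $\lne(\pi)$, $\cL_{1}$, $\cL_{2}$ and four point classes $\{\bm{p}\}$, $\pi$, $\cP_{1}$, $\cP_{2}$, where $\cP_{1}$ and $\cP_{2}$ are defined by the number of lines of $\cL_{1}$ through a point, not by the group action. Tacticality of a merged partition is not automatic and is not bookkeeping: the paper needs the intersection-number characterization of Cameron--Liebler classes (Theorem~\ref{Tintsizes}), the pattern identities (Theorem~\ref{Tpatternprops}), and Lemma~\ref{Lavals} to control the four values $(q+1)a_{1},\dots,(q+1)a_{4}$ of $|\st(\bm{r})\cap\cL_{1}|$, and then a separate flag count to verify the block-tactical condition.

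This also shows that your final paragraph has the case distinction backwards. The group, its orbits, the tight sets, and the line classes exist for \emph{all} $q\equiv 5,9\bmod{12}$; what is special about $q\equiv 9\bmod{12}$ is arithmetic, not the availability of $G$. From Lemma~\ref{Lavals} one gets $a_{1}(q-a_{1})+a_{2}(q-a_{2})=\frac{q(q-1)}{2}$, hence $a_{2}^{2}\equiv -a_{1}^{2}\pmod{q}$; when $q=3^{2e}$ this forces $a_{1}\equiv a_{2}\equiv 0\pmod{3^{e}}$ and then $a_{1}=a_{2}=\frac{q-\sqrt{q}}{2}$, so the four point orbits carry only two distinct intersection numbers and collapse into the two classes $\cP_{1}$, $\cP_{2}$, yielding equally many point and line classes. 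For $q\equiv 5\bmod{12}$ no such collapse is forced, which is precisely why the counterexample to this conjecture is claimed only for $q\equiv 9\bmod{12}$. Without this arithmetic step, your plan produces a tactical decomposition with more point classes than line classes, which refutes nothing.
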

\begin{conjecture}\label{conj:ColGrp}
An irreducible collineation group of $\pg(d,q)$ having equally many
orbits on points and lines either 
\begin{enumerate}[label=\rm{(\roman*)}, ref=\rm{(\roman*)}]
\item is line-transitive;
\item stabilizes a hyperplane $\pi$ and acts line-transitively on it;
  or (dually) 
\item fixes a point $\bm{p}$ and acts line-transitively on the
  quotient space.
\end{enumerate}
\end{conjecture}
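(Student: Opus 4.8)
The plan is to prove Conjecture~\ref{conj:ColGrp} by transporting it, via the two facts recorded just before the three conjectures, into the combinatorial setting of Cameron--Liebler line classes, and then classifying. First I would fix an irreducible collineation group $G$ of $\pg(d,q)$ with equally many orbits on points as on lines. Since $G$ preserves incidence and is transitive on each of its orbits, the number of lines in a given line orbit through a point depends only on the point orbit of that point (and dually); hence the orbit partition is a tactical decomposition, and it is \emph{symmetric} precisely because the point and line orbit counts agree. Thus $G$ induces a symmetric tactical decomposition, and by the stated fact each of its line classes is a Cameron--Liebler line class. It therefore suffices to determine which such decompositions can occur, i.e.\ to run the implication chain Conjecture~\ref{conj:CL} $\Rightarrow$ Conjecture~\ref{conj:STD} $\Rightarrow$ Conjecture~\ref{conj:ColGrp} and then translate back.

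Second I would carry out the classification under the assumption that the line classes are \emph{trivial} Cameron--Liebler line classes. Writing the line orbits as $\cL_1,\dots,\cL_r$ with parameters $x_1,\dots,x_r$, additivity of the parameter over the partition of all lines gives $\sum_i x_i = q^2+1$. If each $\cL_i$ is forced to be a union of stars $\st(\bm p)$ and hyperplane line-sets $\lne(H)$, then the homogeneity imposed by $G$ acting transitively on each $\cL_i$ should cut the possibilities down to exactly the three configurations of Conjecture~\ref{conj:STD}, which I would match to the conclusions of Conjecture~\ref{conj:ColGrp}: a single line orbit yields line-transitivity, case (i); the configuration with a fixed point $\bm p$ and line classes $\st(\bm p)$, $\st(\bm p)^{C}$ forces $G$ to fix $\bm p$ and, under projection from $\bm p$, to act line-transitively on the quotient space, case (iii); and dually a stabilized hyperplane $H$ with line classes $\lne(H)$, $\lne(H)^{C}$ gives line-transitivity on $H$, case (ii).

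The hard part is the single step feeding this classification, namely showing that the line orbits really are trivial Cameron--Liebler line classes; in the Cameron--Liebler framework this is exactly Conjecture~\ref{conj:CL}, the genuine combinatorial core and the only serious obstacle. I expect this to be where the argument is delicate, and here lies the crucial subtlety: Conjecture~\ref{conj:ColGrp} is the weakest of the three statements, so a proof does not require the full strength of Conjecture~\ref{conj:CL}, but only that no \emph{nontrivial} Cameron--Liebler line class occurs as a line orbit of an irreducible group with balanced orbit counts. Since this paper produces nontrivial Cameron--Liebler line classes for $q \equiv 5,9 \bmod 12$, the decisive question becomes whether any of these, or their complements and unions, can arise as the line orbits of such a group; controlling the full collineation stabilizer of the new examples and its orbit structure on points and on lines is where I anticipate the real difficulty, so that a complete resolution of Conjecture~\ref{conj:ColGrp} would ultimately demand a direct group-theoretic analysis of these stabilizers rather than the reduction above.
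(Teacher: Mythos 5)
There is a genuine gap, and in fact a structural misunderstanding of how this statement relates to the rest of the paper. The statement you are asked about is Conjecture~\ref{conj:ColGrp}, which this paper does not prove at all: it is stated as one of Cameron and Liebler's original conjectures, and the paper's only ``proof'' is the remark that it was established by Bamberg and Penttila in \cite{BamPentt}, by a direct group-theoretic classification (their paper is about overgroups of cyclic Sylow subgroups of linear groups). Your proposed route --- running the implication chain Conjecture~\ref{conj:CL} $\Rightarrow$ Conjecture~\ref{conj:STD} $\Rightarrow$ Conjecture~\ref{conj:ColGrp} and classifying under the assumption that every line orbit is a \emph{trivial} Cameron--Liebler line class --- cannot work, because both of the stronger conjectures are false: Conjecture~\ref{conj:CL} was disproved already in \cite{D1999} and \cite{BD1999}, and this very paper constructs an infinite family of counterexamples to Conjecture~\ref{conj:STD}, namely symmetric tactical decompositions of $\pg(3,3^{2e})$ with four point classes $\{\bm{p}\}$, $\pi$, $\cP_{1}$, $\cP_{2}$ and four line classes $\st(\bm{p})$, $\lne(\pi)$, $\cL_{1}$, $\cL_{2}$. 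So the classification step feeding your argument rests on a hypothesis that is known to fail, and the reduction collapses.

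You do recognize this in your final paragraph, but what remains there is a research plan, not a proof: you correctly observe that Conjecture~\ref{conj:ColGrp} only needs that no nontrivial Cameron--Liebler line class arises as a line orbit of an \emph{irreducible} group with balanced orbit counts, yet you supply no argument for that. For the specific examples of this paper the worry can be dispatched cheaply: the group $G = \langle c, z \rangle$ stabilizing $\cL_{1}$ and $\cL_{2}$ fixes the point $\bm{p}$ and the plane $\pi$ in $\pg(3,q)$ (and the map $o$ interchanges the two systems of generators, so it induces a correlation rather than a collineation), hence any collineation group preserving this decomposition is reducible and cannot contradict the conjecture. But ruling out one family is not a proof of the conjecture; the actual resolution in \cite{BamPentt} proceeds by classifying irreducible collineation groups with equally many point and line orbits directly, with no detour through tactical decompositions or Cameron--Liebler line classes. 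In short: the paper contains no proof for you to match, your reduction runs the implications in the wrong direction through statements the paper itself refutes, and the genuinely needed group-theoretic classification is absent from your proposal.
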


Conjecture~\ref{conj:CL} was disproved for the first time in
\cite{D1999} with the construction of a Cameron--Liebler line class
with parameter $5$ in $\pg(3,3)$.  Later in \cite{BD1999}, the authors
construct an infinite family of Cameron--Liebler line classes. More
particularly, they show the existence of a Cameron--Liebler line class
with parameter $\frac{q^{2}+1}{2}$ for odd $q$. Essentially, their
construction is done in a geometric way; the Cameron--Liebler
line-class is obtained as the union of all secant lines to a fixed
elliptic quadric in $\pg(3,q)$, and a well defined and cleverly chosen
subset of the tangent lines to the same elliptic quadric.  In
\cite{GPCL05}, another example was constructed in $\pg(3,4)$
having parameter $7$.  None of these examples arise as line classes in
a symmetric tactical decomposition, however.

Meanwhile, the years after \cite{CL1982} have also seen the appearance
of many non-existence results (\cite{Pentt1991}, \cite{D1999},
\cite{GPCL05}, \cite{GS2004}, \cite{BHS2008}, \cite{BGHS2009},
\cite{M2010}, \cite{BM2013}).  Currently, the most general result
(which includes all the previous ones) is found in
\cite{Metsch2014}. Its main result is the following: 
if $\cL$ is a Cameron--Liebler line class with parameter $x$ then $x
\leq 2$ or $x > q\sqrt[4]{\frac{q}{2}}-\frac{2}{3}q$. This result
improves all previous non-existence results except for a few cases
dealing with small values of $q$. 

In this paper, we prove the existence of Cameron--Liebler line classes
with parameter $\frac{q^2-1}{2}$ for $q \equiv 5,9 \bmod 12$,  
which is, since \cite{BD1999}, the first construction of an infinite
family. These examples were first found by Morgan Rodgers,
\cite{Rodgers}. His examples were obtained under the assumption of
symmetry conditions for a hypothetical line class, then proceeding
through eigenvalue methods and finally obtaining the examples through
a computer search.  It was observed that the examples constructed
in $\pg(3,9)$ and $\pg(3,81)$ did in fact arise as line classes of a
symmetric tactical decomposition having four classes on points and
lines, providing the first counterexamples to
Conjecture~\ref{conj:STD}.   Here, we show that this is the case for
all of our examples with $q \equiv 9 \bmod{12}$.
\begin{remark}
It is worth noting that Conjecture~\ref{conj:ColGrp} was proven
recently in \cite{BamPentt}.
\end{remark}

\section{Tight sets, eigenvectors, and tactical decompositions}

Consider a $6$-dimensional vector space $V(6,q)$ over the finite field
$\bbF_q$. Let $f: V(6,q) \rightarrow \bbF_q$ be a bilinear form of
Witt index $3$. Hence the maximal totally isotropic subspaces with
relation to $f$ have dimension $3$. The totally isotropic subspaces
with relation to $f$ induces in $\pg(5,q)$ a set of points, lines and
planes contained in a quadratic surface. Up to coordinate
transformation, there is only one bilinear form of Witt index $3$, and
so up to coordinate transformation, there is only one such quadratic
surface. We call this geometry the hyperbolic quadric in $\pg(5,q)$,
and denote it as $\cQ^+(5,q)$. The projective spaces of maximal
dimension contained in $\cQ^+(5,q)$ are also called {\em
  generators}. Hence, the generators of $\cQ^+(5,q)$ are planes. The
generators of $\cQ^+(5,q)$ come in two systems. Two distinct
generators from the same system meet in a point, two generators from
different systems meet either in a line or are skew. Hence two skew
generators of $\cQ^+(5,q)$ necessarily belong to  a different system,
and no three generators mutually skew can be found on $\cQ^+(5,q)$. We
refer to e.g.\ \cite{HirschI} to find all these well known properties
of quadrics in finite projective spaces.  Two points $\bm{p}, \bm{q}$
of $\cQ^{+}(5,q)$ are collinear if $f(u,v) = 0$ with $u,v$ being
vector representatives of $\bm{p},\bm{q}$. We denote $\bm{p}^{\perp}$
the set of points of $\cQ^+(5,q)$ collinear with the point $\bm{p}$
(this includes $\bm{p}$ itself).
\begin{definition} A set $\cT$ of points of $\cQ^{+}(5,q)$ is an
  \emph{$x$-tight set} if for every point $\bm{p} \in \cQ^{+}(5,q)$ we
  have
\[
|\bm{p}^{\perp} \cap \cT| = x(q+1) + q^{2}\bm{c}_{\cT}(\bm{p}).
\]
\end{definition}
The {\em Klein correspondence} is a bijection from the set of lines of
$\pg(3,q)$ to the set of points of $\cQ^+(5,q)$. The image of a line
is the point with coordinates the Pl\"ucker coordinates of the line,
and concurrent lines are mapped to collinear points. Generators 
correspond with the image of either the set of lines through a point,
or the set of lines in a plane. The image of a Cameron--Liebler line
class with parameter $x$ is an $x$-tight set of $\cQ^+(5,q)$. Hence,
the trivial examples are (the union of two skew) generators and their
complements. Note that tight sets were introduced in
\cite{Payne1987}.

In constructing $i$-tight sets of $\cQ^{+}(5,q)$, it is helpful to use
the following result, due to Bamberg, Kelly, Law and Penttila 
\cite{BKLP07}.
\begin{theorem}\label{thm:Evec}
Let $\cT$ be a set of points in $\cQ^{+}(5,q)$ with characteristic
vector $\bm{c}$ and let $\bm{A}$ be the collinearity matrix of
$\cQ^{+}(5,q)$. Then $\cT$ is an $x$-tight set if and only if  
\[ (\bm{c} - \frac{x}{q^{2}+1} \bm{j}) \bm{A} = (q^{2} -1)(\bm{c}
-\frac{x}{q^{2}+1} \bm{j}) \mbox{.} \]
\end{theorem}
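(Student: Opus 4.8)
The plan is to unwind the definition of an $x$-tight set into a single matrix equation and then recognize that equation as an eigenvalue relation after subtracting the right multiple of the all-ones vector $\bm{j}$. First I would fix the convention that $\bm{A}$ is the adjacency matrix of the collinearity graph, so that $\bm{A}$ has zero diagonal and $\bm{A}_{\bm{p},\bm{q}} = 1$ exactly when $\bm{p} \neq \bm{q}$ and $\bm{p},\bm{q}$ are collinear. Since $\bm{p} \in \bm{p}^{\perp}$ while the diagonal of $\bm{A}$ vanishes, for each point $\bm{p}$ we get
\[
(\bm{A}\bm{c})_{\bm{p}} = |\{\bm{q} \in \cT : \bm{q} \neq \bm{p},\ \bm{q} \in \bm{p}^{\perp}\}| = |\bm{p}^{\perp} \cap \cT| - \bm{c}_{\cT}(\bm{p}).
\]
Keeping careful track of this $\bm{c}_{\cT}(\bm{p})$ correction is the one place where the bookkeeping can go wrong, so I would check it pointwise before proceeding.

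Next I would substitute this identity into the defining relation $|\bm{p}^{\perp} \cap \cT| = x(q+1) + q^{2}\bm{c}_{\cT}(\bm{p})$. This turns the tight-set condition into $(\bm{A}\bm{c})_{\bm{p}} = x(q+1) + (q^{2}-1)\bm{c}_{\cT}(\bm{p})$ for every $\bm{p}$, i.e.\ into the single vector equation
\[
\bm{A}\bm{c} = (q^{2}-1)\bm{c} + x(q+1)\bm{j}.
\]
Because $\bm{A}$ is symmetric, this is the same statement as the one in the theorem with $\bm{c}$ written on the left. To pass to the claimed eigenvector form I would use that the collinearity graph is regular, so $\bm{A}\bm{j} = k\bm{j}$, where $k = q(q+1)^{2}$ is the valency. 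This valency I would compute directly: $\bm{p}^{\perp}$ meets $\cQ^{+}(5,q)$ in a cone with vertex $\bm{p}$ over a $\cQ^{+}(3,q)$, whose $(q+1)^{2}$ base points give $(q+1)^{2}$ lines of the quadric through $\bm{p}$, each carrying $q$ further points.

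Finally, writing $\bm{v} = \bm{c} - \frac{x}{q^{2}+1}\bm{j}$, I would compute $\bm{A}\bm{v} = \bm{A}\bm{c} - \frac{x}{q^{2}+1}k\bm{j}$ and compare it with $(q^{2}-1)\bm{v} = (q^{2}-1)\bm{c} - (q^{2}-1)\frac{x}{q^{2}+1}\bm{j}$. These agree precisely when $\bm{A}\bm{c} = (q^{2}-1)\bm{c} + \frac{x}{q^{2}+1}\bigl(k-(q^{2}-1)\bigr)\bm{j}$, so everything collapses to the arithmetic identity
\[
k - (q^{2}-1) = q(q+1)^{2} - q^{2} + 1 = (q+1)(q^{2}+1),
\]
which makes the coefficient of $\bm{j}$ equal to $\frac{x}{q^{2}+1}(q+1)(q^{2}+1) = x(q+1)$, exactly matching the displayed tight-set equation above. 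As every step is an equivalence, this settles both directions simultaneously. The computation is routine; the only genuine care lies in the diagonal bookkeeping of the first step and in pinning down the valency $k$, after which the numerical coincidence $k-(q^{2}-1)=(q+1)(q^{2}+1)$ does all the work.
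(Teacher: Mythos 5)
Your proof is correct and follows essentially the same route as the paper: translate the pointwise tight-set condition (with the diagonal bookkeeping $|\bm{p}^{\perp}\cap\cT| - \bm{c}_{\cT}(\bm{p})$) into the matrix identity $\bm{c}\bm{A} = (q^{2}-1)\bm{c} + x(q+1)\bm{j}$, then use the regularity $\bm{j}\bm{A} = q(q+1)^{2}\bm{j}$ and the identity $q(q+1)^{2}-(q^{2}-1)=(q+1)(q^{2}+1)$ to pass to the eigenvector form. The paper's proof is just a terser version of exactly this argument, leaving the diagonal correction and the valency computation implicit.
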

\begin{proof}
By definition, $\cT$ is an $x$-tight set if and only if
\[ 
\bm{c} \bm{A} = (q^{2} -1) \bm{c} + x(q+1) \bm{j}.
\]
Since $\bm{j} \bm{A} = q (q+1)^{2} \bm{j}$, the above formula follows 
immediately. 
\end{proof}
Our main theorem concerns tight sets which are disjoint from $(\pi_{1} 
\cup \pi_{2})$, where $\pi_{1}$ and $\pi_{2}$ are two generators which are disjoint.
This allows us to modify this theorem slightly.  We will let
$\bm{A}^{\prime}$ be the matrix obtained from $\bm{A}$ by throwing
away the rows and columns corresponding to points in  $(\pi_{1} \cup
\pi_{2})$. 
\begin{theorem}
\label{EvecMod}
Let $\cT$ be a set of points of $\cQ^{+}(5,q)$ disjoint from $(\pi_{1}
\cup \pi_{2})$, and let $\bm{c}^{\prime}$ be the vector obtained
from the characteristic vector of $\cT$ by removing entries
corresponding to points of $(\pi_{1} \cup \pi_{2})$.  Then $\cT$ is an
$x$-tight set of $\cQ^{+}(5,q)$ if and only if
\[ 
(\bm{c}^{\prime} - \frac{x}{q^{2} -1} \bm{j}^{\prime}) \bm{A}^{\prime}
= (q^{2}-1)(\bm{c}^{\prime} - \frac{x}{q^{2} -1} \bm{j}^{\prime}). 
\]
\end{theorem}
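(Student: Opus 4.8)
The plan is to reduce Theorem~\ref{EvecMod} to Theorem~\ref{thm:Evec} by comparing the full collinearity matrix $\bm{A}$ with the principal submatrix $\bm{A}'$ obtained by deleting the rows and columns of $R := \pi_1 \cup \pi_2$. Write $S := \cQ^{+}(5,q) \setminus R$ and let $\bm{c}$ be the characteristic vector of $\cT$ on all of $\cQ^{+}(5,q)$; since $\cT \cap R = \emptyset$, its restriction to $S$ is $\bm{c}'$ and its restriction to $R$ is zero. The first ingredient is geometric: for $\bm{p} \in S$ we have $\pi_i \not\subseteq \bm{p}^{\perp}$ (otherwise $\langle \bm{p}, \pi_i\rangle$ would be a totally isotropic subspace of projective dimension $3$, impossible as the Witt index is $3$), so $\bm{p}^{\perp} \cap \pi_i$ is a line and $\bm{p}$ has exactly $2(q+1)$ neighbours in $R$. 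Hence $\bm{A}'$ is regular of valency $q(q+1)^{2} - 2(q+1) = (q^{2}-1)(q+2)$, giving $\bm{j}'\bm{A}' = (q^{2}-1)(q+2)\bm{j}'$; and $(\bm{c}'\bm{A}')_{\bm{p}} = (\bm{c}\bm{A})_{\bm{p}}$ for every $\bm{p} \in S$ because $\bm{c}$ is supported on $S$.

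Using $\bm{j}'\bm{A}' = (q^{2}-1)(q+2)\bm{j}'$, a one-line manipulation turns the displayed identity into $\bm{c}'\bm{A}' = (q^{2}-1)\bm{c}' + x(q+1)\bm{j}'$, that is, into $(\bm{c}\bm{A})_{\bm{p}} = (q^{2}-1)\bm{c}_{\bm{p}} + x(q+1)$ for all $\bm{p} \in S$. By the proof of Theorem~\ref{thm:Evec}, $\cT$ is $x$-tight precisely when this identity holds at \emph{every} point of $\cQ^{+}(5,q)$, so the forward implication follows at once by restricting to $S$. The real work is the converse: I must show that the identity on $S$ forces the identity on $R$.

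For this I would introduce $\bm{e} := \bm{c}\bm{A} - (q^{2}-1)\bm{c} - x(q+1)\bm{j}$; the $S$-identity says exactly that $\bm{e}$ is supported on $R$, and $\cT$ is $x$-tight if and only if $\bm{e} = 0$. Summing the $S$-identity over $S$ (again using $\bm{A}'\bm{j}'^{T} = (q^{2}-1)(q+2)\bm{j}'^{T}$) gives $|\cT| = x(q^{2}+q+1)$, equivalently that the $\bm{j}$-component of $\bm{c}$ equals $\frac{x}{q^{2}+1}\bm{j}$. The collinearity graph of $\cQ^{+}(5,q)$ is strongly regular with eigenvalues $q(q+1)^{2}$, $q^{2}-1$ and $-(q+1)$; decomposing $\bm{c}$ accordingly, the computed $\bm{j}$-component kills the $q(q+1)^{2}$-part of $\bm{e}$, while its $(q^{2}-1)$-part cancels by construction. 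Thus $\bm{e}$ lies in the $(-(q+1))$-eigenspace of $\bm{A}$. Being also supported on $R$, its restriction $\bm{e}|_{R}$ is then a $(-(q+1))$-eigenvector of the principal submatrix $\bm{A}_{RR}$.

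The crux, and the step I expect to be the main obstacle, is to rule this out by showing that $-(q+1)$ is not an eigenvalue of $\bm{A}_{RR}$. I would use the explicit block form $\bm{A}_{RR} = \left(\begin{smallmatrix} J-I & B \\ B^{T} & J-I \end{smallmatrix}\right)$, in which $J-I$ expresses that each generator $\pi_i$ induces a complete graph and $B$ is the $\pi_1 \times \pi_2$ collinearity block. The geometric counts give $B\bm{1} = (q+1)\bm{1}$, and, since two distinct points of $\pi_1$ have exactly one common neighbour in $\pi_2$, also $BB^{T} = B^{T}B = qI + J$. Splitting a hypothetical eigenvector into its parts along $\bm{1}$ and along the zero-sum space and using these relations, one finds that the spectrum of $\bm{A}_{RR}$ is $\{(q+1)^{2},\, q^{2}-1,\, -1+\sqrt{q},\, -1-\sqrt{q}\}$, none of which equals $-(q+1)$ when $q \geq 2$. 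Hence $\bm{e}|_{R} = 0$, so $\bm{e} = 0$ and $\cT$ is $x$-tight; equivalently, the $(-(q+1))$-eigenspace of $\bm{A}$ contains no nonzero vector supported on $\pi_1 \cup \pi_2$.
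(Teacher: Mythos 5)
Your proposal is correct, and it is strictly more complete than the paper's own proof. The paper's argument consists exactly of your first two paragraphs: it records the identities $(\bm{c}\bm{A})^{\prime} = \bm{c}^{\prime}\bm{A}^{\prime}$ and $(\bm{j}\bm{A})^{\prime} = \bm{j}^{\prime}\bm{A}^{\prime} + 2(q+1)\bm{j}^{\prime}$ (the latter being your count of $2(q+1)$ neighbours in $\pi_{1}\cup\pi_{2}$) and then simply states that the result follows from Theorem~\ref{thm:Evec}. As you correctly observe, that reduction only shows the displayed $\bm{A}^{\prime}$-equation is equivalent to the tightness identity holding at every point \emph{off} $\pi_{1}\cup\pi_{2}$; it yields the forward implication but is silent on why the identity propagates to the points of $\pi_{1}\cup\pi_{2}$, which is precisely the converse --- and the converse is the direction actually invoked later, in Theorem~\ref{th:final}, to turn eigenvectors of $\bm{B}$ (hence of $\bm{A}^{\prime}$) into tight sets. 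Your last two paragraphs supply this missing step, and the details check out: summing over $S$ gives $|\cT| = x(q^{2}+q+1)$, so the error vector $\bm{e}$ has zero $\bm{j}$-component and lies in the $-(q+1)$-eigenspace of the strongly regular collinearity graph while being supported on $R$; its restriction would then be a $-(q+1)$-eigenvector of $\bm{A}_{RR}$, whereas the relations $B\bm{1}=(q+1)\bm{1}$ and $BB^{T}=B^{T}B=qI+J$ (valid because $\bm{r}\mapsto \bm{r}^{\perp}\cap\pi_{2}$ identifies $B$ with a point-line incidence matrix of $\pg(2,q)$) give $\bm{A}_{RR}$ the spectrum $\{(q+1)^{2},\,q^{2}-1,\,-1+\sqrt{q},\,-1-\sqrt{q}\}$, which avoids $-(q+1)$ for every $q\ge 2$. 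In short, the paper buys brevity by glossing over the ``if'' direction; your spectral-gap analysis of $\bm{A}_{RR}$ is genuinely new content relative to the paper and is what makes the stated ``if and only if'' a complete theorem.
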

\begin{proof}
For a vector $\bm{v}$, denote by $\bm{v}^{\prime}$ the vector obtained
from $\bm{v}$ by deleting entries corresponding to points in $\pi_{1}
\cup \pi_{2}$.  Then $(\bm{c}\bm{A})^{\prime} =
\bm{c}^{\prime}\bm{A}^{\prime}$ since no point of $\cT$ lies in
$\pi_{1} \cup \pi_{2}$, and $(\bm{j}\bm{A})^{\prime} = \bm{j}^{\prime}
\bm{A}^{\prime} + 2(q+1)\bm{j}^{\prime}$. The result then follows from
Theorem~\ref{thm:Evec}. 
\end{proof}
Of course $\cQ^{+}(5,q)$ is very large, having $(q^{2}+1)(q^{2}+q+1)$
points.  The eigenspace of the collinearity matrix associated with the
eigenvector $(q^{2}-1)$ has dimension $q(q^{2}+q+1)$, and finding
vectors in this space corresponding to tight sets amounts to solving a 
difficult integer programming problem.  Thus our work is made easier
by considering various decompositions of the incidence structure, as
well as decompositions of the incidence matrix.  

\begin{definition}
  Given an incidence structure $\cS$, a \textbf{tactical decomposition 
    of $\cS$} is a partition of the points of $\cS$ into point classes
  and the blocks of $\cS$ into block classes such that the number of
  points in a point class which are incident with a given block depends
  only on the class in which the block lies, and the number of blocks in
  a block class which are incident with a given point depends only on
  the class in which the point lies. 
\end{definition}
\begin{definition}
  Let $\bm{A} = \left[ a_{i j} \right]$ be a matrix, along with a
  partition of the row indices into subsets $R_{1}$, $\ldots$, $R_{t}$,
  and a partition of the column indices into subsets $C_{1}$, $\ldots$,
  $C_{t^{\prime}}$.  We will call this a \textbf{tactical decomposition
    of $\bm{A}$} if, for every $(i,j)$, $1 \le i \le t$, $1 \le j \le
  t^{\prime}$, the submatrix $\left[ a_{\bm{p}, \ell} \right]_{\bm{p}
    \in R_{i}, \ \ell \in C_{j}}$ has constant column sums $c_{i j}$ and
  row sums $r_{i  j}$.
\end{definition}  
The most common examples of tactical decompositions of an incidence
structure are obtained by taking as point and block classes the orbits
of some collineation group (although not all examples arise this
way). A tactical decomposition of an incidence structure corresponds
to a tactical decomposition of its incidence matrix.

The following result comes from the theory of the interlacing of
eigenvalues, which was introduced by Higman and Sims and further
developed by Haemers; see \cite{HaeEV} for a detailed survey.
\begin{theorem}
  Suppose $\bm{A}$ can be partitioned as 
  \begin{equation*}
    \bm{A} = 
    \left[ 
      \begin{array}{ccc}
        \bm{A}_{11}   & \cdots & \bm{A}_{1k} \\ 
        \vdots & \ddots & \vdots \\
        \bm{A}_{k1}   & \cdots & \bm{A}_{kk}
      \end{array}
    \right]
  \end{equation*}
  with each $\bm{A}_{ii}$ square, $1 \leq i \leq k$, and each
  $\bm{A}_{ij}$ having constant column sum $c_{ij}$.  Then any
  eigenvalue of the matrix $\bm{B} = [c_{ij}]$ is also an eigenvalue
  of $\bm{A}$. 
\end{theorem}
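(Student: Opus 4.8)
The plan is to produce a single auxiliary matrix that intertwines $\bm{A}$ with the quotient matrix $\bm{B}$, and then transport eigenvectors across it. Write $n$ for the order of $\bm{A}$ and $k$ for the number of diagonal blocks. Since each $\bm{A}_{ii}$ is square, the row index set and the column index set of $\bm{A}$ are partitioned by one and the same partition into classes $R_{1}, \ldots, R_{k}$. Let $\bm{H}$ be the $n \times k$ characteristic matrix of this partition, so that the entry of $\bm{H}$ in row $\ell$ and column $i$ equals $1$ if $\ell \in R_{i}$ and $0$ otherwise. The columns of $\bm{H}$ are the characteristic vectors of the disjoint, nonempty classes $R_{i}$, hence they are linearly independent, so $\bm{H}$ has full column rank $k$.

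First I would verify that the constant column-sum hypothesis is precisely the intertwining relation $\bm{H}^{\top} \bm{A} = \bm{B} \bm{H}^{\top}$. Indeed, for an index $\ell$ lying in class $R_{j}$, the $(i,\ell)$ entry of $\bm{H}^{\top} \bm{A}$ equals $\sum_{p \in R_{i}} a_{p \ell}$, which is the sum of column $\ell$ of the block $\bm{A}_{ij}$ and therefore equals $c_{ij}$ by assumption. On the other hand the $(i,\ell)$ entry of $\bm{B} \bm{H}^{\top}$ is simply $B_{ij} = c_{ij}$. Thus the two products agree entrywise, establishing the identity.

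Next, given an eigenvalue $\lambda$ of $\bm{B}$, I would choose (over a field containing $\lambda$) a nonzero row vector $\bm{w}$ with $\bm{w} \bm{B} = \lambda \bm{w}$; such a left eigenvector exists because $\bm{B}$ and $\bm{B}^{\top}$ have the same characteristic polynomial and hence the same eigenvalues. Multiplying the intertwining identity on the left by $\bm{w}$ yields $(\bm{w} \bm{H}^{\top}) \bm{A} = \bm{w} \bm{B} \bm{H}^{\top} = \lambda (\bm{w} \bm{H}^{\top})$, so $\bm{w} \bm{H}^{\top}$ is a left eigenvector of $\bm{A}$ for $\lambda$, provided it is nonzero. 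This nonvanishing is the only point that requires genuine care, and it is where the full column rank of $\bm{H}$ enters: the linear map $\bm{w} \mapsto \bm{w} \bm{H}^{\top}$ is injective, so $\bm{w} \neq \bm{0}$ forces $\bm{w} \bm{H}^{\top} \neq \bm{0}$. It follows that $\lambda$ is an eigenvalue of $\bm{A}$, which is exactly the assertion. I expect no serious obstacle beyond keeping the row/column conventions straight and checking this final nondegeneracy.
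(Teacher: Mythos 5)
Your proof is correct and is essentially the paper's argument: the paper's one-line proof "expands" an eigenvector of $\bm{B}$ to one of $\bm{A}$, and your vector $\bm{w}\bm{H}^{\top}$ is exactly that expansion, with the intertwining relation $\bm{H}^{\top}\bm{A} = \bm{B}\bm{H}^{\top}$ and the full-column-rank observation making the details (including the nonvanishing and the left/right eigenvector bookkeeping) explicit.
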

\begin{proof}
  We use an eigenvector of $\bm{B}$ to construct and eigenvector of
  $\bm{A}$ by expanding the vector. 
\end{proof}

\section{Characters and Gauss sums}
For some basics on characters of a finite field, see \cite{LidNied};
for an in depth look at Gauss sums, see \cite{GaussSums}.  We will
write $\rmi$ for $\sqrt{-1} \in \bbC$.
\begin{definition}
  Let $\bbF$ be a finite field.  A \textbf{character $\chi$} of $\bbF$
  is a group homomorphism from $\bbF^{*}$ to $\bbC^{*}$. We call 
  $\chi$ \textbf{trivial} if $\chi(x) = 1$ for all $x \in \bbF^{*}$.
\end{definition}

The characters of a finite field $\bbF$ form a group $\hat{\bbF}$
under multiplication, with $\hat{\bbF} \simeq \bbF^{*}$.  The
\textbf{order} of a character $\chi$ is the smallest integer $d$ for
which $\chi(x)^{d} = 1$ for all $x \in \bbF^{*}$. For any character
$\chi$ of $\bbF$, we will write $\overline{\chi}$ for the
conjugate character.  We will always extend characters to the entire
field $\bbF$ by defining $\chi(0) = 0$, except for the trivial
character $\chi_{1}$ for which we define $\chi_{1}(0) = 1$. 

\begin{definition}
  Consider a finite field $\bbF_{p^{r}}$ and let $\chi$ be a character
  of $\bbF_{p^{r}}$.  Let $\bbT_{r}: \bbF_{p^{r}} \to \bbF_{p}$ be the
  absolute trace function and let $\zeta = \exp{(2 \pi \rmi/p)}$.  The
  \textbf{Gauss sum} of $\chi$ is defined to be
  \[ 
  G_{r}(\chi) = \sum_{x \in \bbF_{p^{r}}}\chi(x) \zeta^{\bbT_{r}(x)}.
  \]
\end{definition}

We will need the following result, found in
\cite[Theorem~1.1.4~(a)]{GaussSums}: 
\begin{theorem} \label{GSumProd}
  Let $\chi$ be a nontrivial character of $\bbF_{p^{r}}$.  Then
  \[
  G_{r}(\chi) G_{r}(\overline{\chi}) = \chi(-1)p^{r}.
  \]
\end{theorem}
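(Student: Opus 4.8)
The plan is to expand the product $G_{r}(\chi)G_{r}(\overline{\chi})$ as a double sum and then reduce it to two standard orthogonality relations. Since $\chi$ is nontrivial we have $\chi(0)=0$, so both Gauss sums run effectively over $\bbF_{p^{r}}^{*}$, and
\[
G_{r}(\chi)G_{r}(\overline{\chi}) = \sum_{x \in \bbF_{p^{r}}^{*}} \sum_{y \in \bbF_{p^{r}}^{*}} \chi(x)\,\overline{\chi(y)}\, \zeta^{\bbT_{r}(x)+\bbT_{r}(y)}.
\]
Using that $\bbT_{r}$ is additive and that $\overline{\chi(y)} = \chi(y)^{-1}$ (the values of $\chi$ lie on the unit circle), I would, for each fixed $y$, reparametrize the inner sum over $x$ by the substitution $x = ty$, so that $t$ ranges over $\bbF_{p^{r}}^{*}$. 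Then $\chi(x)\,\overline{\chi(y)} = \chi(ty)\chi(y)^{-1} = \chi(t)$ and $\bbT_{r}(x)+\bbT_{r}(y) = \bbT_{r}((t+1)y)$, so after swapping the order of summation the sum becomes
\[
\sum_{t \in \bbF_{p^{r}}^{*}} \chi(t) \sum_{y \in \bbF_{p^{r}}^{*}} \zeta^{\bbT_{r}((t+1)y)}.
\]

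Next I would evaluate the inner sum according to whether $t+1$ vanishes. If $t = -1$ the exponent is always $0$, giving $\sum_{y \in \bbF_{p^{r}}^{*}} 1 = p^{r}-1$. If $t \neq -1$ then $(t+1)y$ runs over all of $\bbF_{p^{r}}^{*}$ as $y$ does, so the inner sum equals $\sum_{z \in \bbF_{p^{r}}^{*}} \zeta^{\bbT_{r}(z)}$. Here I invoke the first standard fact: since the trace $\bbT_{r}$ is surjective onto $\bbF_{p}$ with every fibre of size $p^{r-1}$, we have $\sum_{z \in \bbF_{p^{r}}} \zeta^{\bbT_{r}(z)} = p^{r-1}\sum_{a \in \bbF_{p}} \zeta^{a} = 0$; removing the $z=0$ term then shows the inner sum equals $-1$ whenever $t \neq -1$.

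Substituting these two evaluations back leaves
\[
G_{r}(\chi)G_{r}(\overline{\chi}) = \chi(-1)(p^{r}-1) - \sum_{\substack{t \in \bbF_{p^{r}}^{*} \\ t \neq -1}} \chi(t).
\]
To finish I would use the second standard fact, the orthogonality relation $\sum_{t \in \bbF_{p^{r}}^{*}} \chi(t) = 0$ for the nontrivial character $\chi$, which gives $\sum_{t \neq -1} \chi(t) = -\chi(-1)$; combining terms yields $\chi(-1)(p^{r}-1) + \chi(-1) = \chi(-1)p^{r}$, as claimed. I do not anticipate a genuine obstacle here: the argument is a routine manipulation of character sums, and the only points requiring care are the bookkeeping of the $\chi(0)=0$ convention and the isolation of the degenerate term $t=-1$ before applying the two orthogonality identities.
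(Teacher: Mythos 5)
Your proposal is correct, and it is worth noting that the paper does not actually prove this statement at all: Theorem~\ref{GSumProd} is quoted verbatim from Berndt, Evans and Williams \cite[Theorem~1.1.4~(a)]{GaussSums}, so there is no internal proof to compare against. What you supply is the standard elementary argument: expand $G_{r}(\chi)G_{r}(\overline{\chi})$ as a double sum over $\bbF_{p^{r}}^{*}$ (legitimate here because the paper's convention $\chi(0)=0$ applies to both $\chi$ and $\overline{\chi}$, as both are nontrivial), substitute $x=ty$, use additivity of the trace to get $\bbT_{r}((t+1)y)$ in the exponent, and then split off the degenerate term $t=-1$ before applying the two orthogonality relations (additive: $\sum_{z\in\bbF_{p^{r}}}\zeta^{\bbT_{r}(z)}=0$ via surjectivity of the trace with fibres of size $p^{r-1}$; multiplicative: $\sum_{t\in\bbF_{p^{r}}^{*}}\chi(t)=0$). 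Each step checks out, including the bookkeeping $\chi(-1)(p^{r}-1)+\chi(-1)=\chi(-1)p^{r}$, and the argument is valid in all characteristics. What your route buys is self-containedness: the paper leans on an external reference, while your one-page computation uses nothing beyond the definitions already set up in the paper's character section, at the cost of a paragraph of routine manipulation that the authors evidently preferred to outsource.
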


\begin{definition}
Let $p$ be odd, and fix a primitive element $\alpha$ of
$\bbF_{p^{r}}$.  We define the \textbf{quadratic character} $\chi_{2}$
of $\bbF_{p^{r}}$ by $\chi_{2}(0) = 0$, $\chi_{2}(\alpha^{n}) =
(-1)^{n}$. We have that $\chi_{2}$ is the unique character of
$\bbF_{p^{r}}$ having order $2$. 
\end{definition}

We apply the following result from \cite{GaussSums} to evaluate Gauss
sums of the quadratic character:
\begin{theorem}\label{GSum2Eval}
  Let $p$ be an odd prime, $\chi_{2}$ be the quadratic character on
  $\bbF_{p^{r}}$.  Then
  \[ 
  G_{r}(\chi_{2}) = 
  \begin{cases} 
    (-1)^{r-1}\sqrt{q} & \mbox{ if } p \equiv 1 \bmod{4}\\
    (-1)^{r-1} \rmi^{r} \sqrt{q} & \mbox{ if } p \equiv 3 \bmod{4}.
  \end{cases} 
  \]
\end{theorem}

\begin{theorem}[\textbf{Davenport-Hasse product
    relation}] \label{DavHasse} 
  Let $\psi$ be a character of $\bbF_{p^{r}}$ of order $d > 1$.  If
  $\chi$ is a nontrivial character of $\bbF_{p^{r}}$ such that $\chi
  \psi^{i}$ is nontrivial for all $1 \leq i < d$ then we have
  \[ 
  \frac{\chi^{d}(d) G_{r}(\chi)}{G_{r}(\chi^{d})} \prod_{i=1}^{d-1}
  \frac{G_{r}(\chi \psi^{i})}{G_{r}(\psi^{i})} = 1. 
  \]
\end{theorem}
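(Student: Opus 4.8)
The plan is to move from Gauss sums to Jacobi sums, the setting in which a product relation of this shape is most transparent, and then to reduce the statement to a single explicit evaluation. For nontrivial characters $\alpha,\beta$ of $\bbF_{p^r}$ with $\alpha\beta$ nontrivial, write $J(\alpha,\beta)=\sum_{t}\alpha(t)\beta(1-t)$; expanding the product $G_r(\alpha)G_r(\beta)$ and substituting yields the fundamental relation $G_r(\alpha)G_r(\beta)=J(\alpha,\beta)\,G_r(\alpha\beta)$, and iterating it gives, for the generalized Jacobi sum $J(\chi,\chi\psi,\dots,\chi\psi^{d-1})$,
\[
\prod_{i=0}^{d-1}G_r(\chi\psi^i)=J(\chi,\chi\psi,\dots,\chi\psi^{d-1})\,G_r\Big(\textstyle\prod_{i=0}^{d-1}\chi\psi^i\Big),
\]
valid whenever the intermediate products stay nontrivial, which the hypotheses arrange. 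Since $\psi$ has order $d$ and $\hat{\bbF}_{p^r}$ is cyclic, the total character is $\prod_{i=0}^{d-1}\chi\psi^i=\chi^d\psi^{d(d-1)/2}$, equal to $\chi^d$ for odd $d$ and to $\chi^d\chi_2$ for even $d$. Cross-multiplying, the asserted identity takes the equivalent form
\[
\chi^d(d)\prod_{i=0}^{d-1}G_r(\chi\psi^i)=G_r(\chi^d)\prod_{i=1}^{d-1}G_r(\psi^i),
\]
so the whole theorem is reduced to evaluating $J(\chi,\chi\psi,\dots,\chi\psi^{d-1})$.

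I would treat the odd case first, where the target becomes $\chi^d(d)\,J(\chi,\chi\psi,\dots,\chi\psi^{d-1})=\prod_{i=1}^{d-1}G_r(\psi^i)$. The right-hand side is free of $\chi$, so the content is to show the left-hand side is as well and then to identify the constant. The natural engine is the $d$-th power map $x\mapsto x^{d}$ on $\bbF_{p^r}^{*}$, whose fibre over $a$ is counted by $\sum_{i=0}^{d-1}\psi^i(a)$: feeding this into the sum defining $G_r(\chi^d)$ is exactly what manufactures the twisted characters $\chi\psi^i$, while rescaling the summation variable by the field element $d$ is what produces the normalizing factor $\chi^d(d)$. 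Once independence of the normalized sum from $\chi$ is established, I would pin down the constant by evaluating at one convenient admissible character outside $\langle\psi\rangle$—for instance $\chi_2$, using the explicit value of $G_r(\chi_2)$ from Theorem~\ref{GSum2Eval}.

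For even $d$ the telescoping delivers $G_r(\chi^d\chi_2)$ in place of $G_r(\chi^d)$, so a further step must convert one into the other; here the quadratic Gauss sum re-enters, and its value together with the reflection identity $G_r(\psi^i)G_r(\overline{\psi^i})=\psi^i(-1)p^{r}$ from Theorem~\ref{GSumProd} is used to absorb the discrepancy. Theorem~\ref{GSumProd} also furnishes a running sanity check: it gives $|G_r(\alpha)|=p^{r/2}$ for every nontrivial $\alpha$, so both sides of the displayed identity have absolute value $p^{rd/2}$, and any miscount of factors would surface immediately.

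The step I expect to be the main obstacle is the evaluation of the generalized Jacobi sum, and specifically the bookkeeping of the scalar $\chi^d(d)$. The difficulty is genuine: invariance of the normalized sum under $\chi\mapsto\chi\psi^{j}$ is automatic (it merely permutes the factors and leaves $\chi^d$ fixed), but invariance across cosets of $\langle\psi\rangle$ is tantamount to the theorem itself, so the substitution must be organized so that the factor $\chi^d(d)$ falls out cleanly and the residual $\chi$-dependence cancels all at once. A conceptually cleaner but heavier alternative, which I would use as a fallback, is to express each $G_r$ through the Gross--Koblitz formula in terms of the $p$-adic Gamma function and then invoke the Gauss multiplication formula for $\Gamma_p$, of which this relation is the precise finite-field shadow.
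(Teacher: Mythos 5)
The paper itself contains no proof of Theorem~\ref{DavHasse}: like Theorems~\ref{GSumProd} and~\ref{GSum2Eval} it is quoted from \cite{GaussSums}, so your proposal has to stand as a self-contained argument, and as such it has two gaps, one repairable and one fatal. The repairable one: telescoping $G_r(\alpha)G_r(\beta)=J(\alpha,\beta)\,G_r(\alpha\beta)$ requires every partial product $\chi^{k}\psi^{k(k-1)/2}$, $2\le k\le d$, to be nontrivial, and you assert that the hypotheses ``arrange'' this. They do not. Take $d=2$, $\psi=\chi_{2}$, and $\chi$ of order $4$ (possible whenever $p^{r}\equiv 1\bmod 4$, which is exactly the regime of this paper), so that $\chi^{2}=\chi_{2}$: then $\chi$ and $\chi\psi=\chi^{3}$ are nontrivial, so the theorem's hypotheses hold, yet the terminal character $\chi^{2}\psi$ is trivial, $G_{r}(\chi^{2}\psi)=0$ under the paper's convention $\chi_{1}(0)=1$, and the identity you iterate is simply false at that step --- one must instead use $G_{r}(\chi)G_{r}(\overline{\chi})=\chi(-1)p^{r}$ from Theorem~\ref{GSumProd}. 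Analogous degenerations occur for every $d$ (for $d=3$, take $\chi=\chi_{2}\psi$; then the intermediate character $\chi^{2}\psi$ is trivial). This could be fixed by a case analysis, but you never carry it out.

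The fatal gap is the one you flag yourself: after the reduction, the theorem is equivalent to showing that $\chi^{d}(d)\,J(\chi,\chi\psi,\dots,\chi\psi^{d-1})$ is independent of $\chi$, and you concede this ``is tantamount to the theorem itself.'' The engine you offer does not close it: inserting the fibre count $\sum_{i=0}^{d-1}\psi^{i}(x)$ of the map $x\mapsto x^{d}$ into a Gauss sum produces only the \emph{additive} identity $\sum_{i=0}^{d-1}G_{r}(\chi\psi^{i})=\sum_{w}\chi^{d}(w)\,\zeta^{\bbT_{r}(w^{d})}$, and no rescaling of the summation variable converts a sum of Gauss sums into their product; so your outline consists of correct bookkeeping wrapped around an unproved core. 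It is worth noting that the only instance the paper ever invokes is $d=2$, $\psi=\chi_{2}$, and that this case \emph{does} admit an elementary proof in the spirit you want, provided the power-map substitution is made inside the Jacobi sum rather than the Gauss sum: putting $t=(1+s)/2$ in $J(\chi,\chi)=\sum_{t}\chi(t)\chi(1-t)$ and splitting over squares gives $J(\chi,\chi)=\overline{\chi}(4)\,J(\chi_{2},\chi)$, which upon converting both Jacobi sums to Gauss sums is exactly $\chi^{2}(2)\,G_{r}(\chi)G_{r}(\chi\chi_{2})=G_{r}(\chi^{2})G_{r}(\chi_{2})$, with no degenerate cases under the stated hypotheses. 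For general $d$, however, every known proof needs genuinely heavier input --- Stickelberger/Gross--Koblitz as in your fallback, or the induction carried out in \cite{GaussSums} --- and invoking Gross--Koblitz plus the multiplication formula for $\Gamma_{p}$ wholesale is citing a proof rather than giving one, which is, in fairness, precisely what the paper does.
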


\begin{lemma} \label{GSumRatio}
  Let $q = p^{h}$, and $\chi$ be a character of $\bbF_{q^{3}}$ such that 
  the restriction of $\chi$ to $\bbF_{q}$ is nontrivial.  Let $\rmT$
  denote the relative trace function $\bbF_{q^{3}} \to \bbF_{q}$. Then
  \begin{align}
    \label{GSR} \sum_{x \in \bbF_{q^{3}}} \overline{\chi}(\rmT(x)) \chi(x) & =
    (q-1)\frac{G_{3h}(\chi)}{G_{h}(\chi)}.
  \end{align}
\end{lemma}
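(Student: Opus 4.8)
The plan is to expand the factor $\overline{\chi}(\rmT(x))$ --- a value of a multiplicative character of $\bbF_q$ --- as a short sum over additive characters, interchange the order of summation, and recognize the resulting inner sum as a scalar multiple of the Gauss sum $G_{3h}(\chi)$. The only structural input I would record at the outset is transitivity of the trace, $\bbT_{3h} = \bbT_h \circ \rmT$, so that $\zeta^{\bbT_{3h}(x)} = \zeta^{\bbT_h(\rmT(x))}$ for every $x \in \bbF_{q^3}$; in particular $G_{3h}(\chi) = \sum_{x \in \bbF_{q^3}} \chi(x)\,\zeta^{\bbT_h(\rmT(x))}$. Write $\chi_0$ for the restriction of $\chi$ to $\bbF_q$, which is nontrivial by hypothesis and equals the character occurring in $G_h(\chi)$; this nontriviality is exactly what legitimizes the expansion below.

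The key identity is that for any nontrivial character $\eta$ of $\bbF_q$ and any $t \in \bbF_q$,
\[
\sum_{a \in \bbF_q^{*}} \eta(a)\,\zeta^{\bbT_h(at)} = \overline{\eta}(t)\, G_h(\eta).
\]
When $t \ne 0$ this follows from the substitution $a \mapsto a t^{-1}$, and when $t = 0$ both sides vanish (the left because $\eta$ is nontrivial, the right because $\overline{\eta}(0) = 0$). Taking $\eta = \chi_0$ and $t = \rmT(x)$, and solving for the character value, gives
\[
\overline{\chi_0}(\rmT(x)) = \frac{1}{G_h(\chi_0)} \sum_{a \in \bbF_q^{*}} \chi_0(a)\, \zeta^{\bbT_h(a\,\rmT(x))}.
\]

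Substituting this into the left-hand side of \eqref{GSR} and interchanging the two summations, I would arrive at
\[
\frac{1}{G_h(\chi_0)} \sum_{a \in \bbF_q^{*}} \chi_0(a) \sum_{x \in \bbF_{q^3}} \chi(x)\, \zeta^{\bbT_h(a\,\rmT(x))}.
\]
Now $\bbF_q$-linearity of the relative trace gives $a\,\rmT(x) = \rmT(ax)$, so by transitivity the inner sum equals $\sum_{x} \chi(x)\,\zeta^{\bbT_{3h}(ax)}$; the multiplicative substitution $x \mapsto a^{-1}x$ (a bijection of $\bbF_{q^3}$ for each fixed $a \in \bbF_q^{*}$) then factors out $\overline{\chi}(a) = \overline{\chi_0}(a)$ and leaves precisely $G_{3h}(\chi)$. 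The whole expression therefore collapses to $G_h(\chi_0)^{-1}\, G_{3h}(\chi) \sum_{a \in \bbF_q^{*}} \chi_0(a)\overline{\chi_0}(a)$, and since $|\chi_0(a)| = 1$ for each of the $q-1$ elements $a \in \bbF_q^{*}$, the remaining sum is simply $q-1$, giving $(q-1)\,G_{3h}(\chi)/G_h(\chi)$ as claimed.

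The computation is short, and the step I expect to be the genuine crux is controlling the nontriviality hypothesis on $\chi_0$: it is used once to guarantee $G_h(\chi_0) \ne 0$ so that the division is meaningful, and once to make the expansion of $\overline{\chi_0}$ valid uniformly, including at $t = 0$. The only other point I would verify carefully is that $\overline{\chi}$ evaluated at $a \in \bbF_q^{*}$ coincides with $\overline{\chi_0}(a)$, so that no ambiguity arises between $\chi$ and its restriction when the two summations are recombined.
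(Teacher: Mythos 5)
Your proof is correct and is essentially the paper's own argument in a repackaged form: the paper multiplies the sum by $G_{h}(\chi)$ and reindexes the resulting double sum (substituting $\lambda = \rmT(x)\mu$ and then $x = \mu^{-1}y$, using transitivity of the trace), while you run the same computation in the opposite direction by expanding $\overline{\chi}(\rmT(x))$ via the inverse Gauss-sum identity and swapping the order of summation. The only organizational difference is that your expansion identity holds uniformly at $t=0$, so the trace-zero terms that the paper must dispose of separately at the end (via the $\chi(\omega)\neq 1$ scaling trick) are absorbed automatically into the identity $\sum_{a \in \bbF_{q}^{*}}\chi(a) = 0$.
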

\begin{proof}
  We begin by multiplying \eqref{GSR} by $G_{h}(\chi)$.  Note that we
  can omit the terms where $\rmT(x) = 0$, so
  \begin{align*}
    \sum_{\lambda \in \bbF_{q}} \chi(\lambda) \zeta^{\bbT_{h}(\lambda)} 
    \sum_{x \in \bbF_{q^{3}}} \overline{\chi}(\rmT(x)) \chi(x) & =
    \sum_{\lambda \in \bbF_{q}^{*}} \sum_{x \in \bbF_{q^{3}}, \rmT(x) \neq
      0} \overline{\chi}(\rmT(x))\chi(\lambda x) \zeta^{\bbT_{h}(x)}.
  \end{align*}
  We replace $\lambda$ with $\rmT(x) \mu$:
  \begin{align*}
    &= \sum_{\mu \in \bbF_{q}^{*}} \sum_{x \in \bbF_{q^{3}}, \rmT(x) \neq
      0} \overline{\chi}(\rmT(x)) \chi(\rmT(x) \mu x)
    \zeta^{\bbT_{h}(\rmT(x)\mu)}\\
    &= \sum_{\mu \in \bbF_{q}^{*}} \sum_{x \in \bbF_{q^{3}}, \rmT(x) \neq
      0} \chi(\mu x) \zeta^{\bbT_{h}(\rmT(\mu x))}.
  \end{align*}
  Now we replace $x$ by $\mu^{-1}y$ so that $\mu$ disappears from the
  formula: 
  \begin{align*}
    &= (q-1) \sum_{y \in \bbF_{q^{3}}, \rmT(y) \neq
      0} \chi(y) \zeta^{\bbT_{3h}(y)}.
  \end{align*}
  Except for the condition that $\rmT(y) \neq 0$, this is equal to
  $(q-1)G_{3h}(\chi)$.  However,
  \begin{align*}
    \sum_{y \in \bbF_{q^{3}}, \rmT(y) = 0} \chi(y) \zeta^{\bbT_{3h}(y)} &
    =  \sum_{y \in \bbF_{q^{3}}, \rmT(y) = 0} \chi(y).
  \end{align*}
  Let $\omega$ be a primitive element of $\bbF_{q}$, we have
  $\chi(\omega) \neq 1$ by assumption.  By replacing $y$ with $\omega
  y$, we get 
  \begin{align*}
    \sum_{y \in \bbF_{q^{3}}, \rmT(y) = 0} \chi(y) = \sum_{y \in
      \bbF_{q^{3}}, \rmT(y) = 0} \chi(\omega y) = \chi(\omega) \sum_{y \in
      \bbF_{q^{3}}, \rmT(y) = 0} \chi(y),
  \end{align*}
  which implies that the sum of the left hand side must be $0$ and that
  the condition $\rmT(y) \neq 0$ does not make a difference.
\end{proof}

\begin{definition}
  Let $f: \bbF^{*} \to \bbC$, and $\chi$ be any character of $\bbF$.  Then
  the \textbf{Fourier transform} of $f$ is defined to be
  \[ 
  \hat{f}(\chi) = \sum_{x \in \bbF^{*}} f(x) \overline{\chi}(x). 
  \]
\end{definition}

The Fourier transform is a bijective map from $\bbC^{\bbF^{*}} \to
\bbC^{\hat{\bbF}}$; that is, if $f : \bbF^{*} \to \bbC$, then
$\hat{f}: \hat{\bbF}\to \bbC$.\\

\section{Algebraic results}\label{SAlg}
Let $q$ be a prime power with $q \not \equiv 1 \bmod{3}$ and $q \equiv
1 \bmod{4}$.  We will consider the fields $\bbF =\bbF_{q}$ with
$\bbF^{*} = \langle \omega \rangle$, and $\bbE = \bbF_{q^{3}}$ with
$\bbE^{*} = \langle \alpha \rangle$, and assume that $\omega =
\alpha^{q^{2}+q+1}$.
\subsection{Preliminaries}
Define the following functions from $\bbE$ to $\bbF$:
\begin{align*}
  \rmT(x) = &x^{q^{2}} + x^{q} + x,\\ 
  \rmN(x) = &x^{q^{2} +q +1}.
\end{align*}

Through some simple computations, the following two properties can be
verified:
\begin{lemma} \label{TSymPoly}
  Let $x,y \in \bbE$.  Then
  \[ 
  (y+x)(y+x^{q})(y+x^{q^{2}}) = y^{3} + y^{2}\rmT(x) + y
  \rmT(x^{q+1})+ \rmN(x). 
  \]
  In the special case that $y \in \bbF$, the left hand side is equal to $\rmN(y+x)$.
\end{lemma}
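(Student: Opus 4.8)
The plan is to expand the left-hand side as a polynomial in $y$ and match coefficients against the claimed expression. Since $x, x^{q}, x^{q^{2}}$ are precisely the three Galois conjugates of $x$ under the cyclic group $\mathrm{Gal}(\bbE/\bbF)$, expanding the product gives
\[
(y+x)(y+x^{q})(y+x^{q^{2}}) = y^{3} + e_{1} y^{2} + e_{2} y + e_{3},
\]
where $e_{1}, e_{2}, e_{3}$ are the elementary symmetric polynomials in $x, x^{q}, x^{q^{2}}$. Two of these are immediate: $e_{1} = x + x^{q} + x^{q^{2}} = \rmT(x)$, and $e_{3} = x \cdot x^{q} \cdot x^{q^{2}} = x^{1+q+q^{2}} = \rmN(x)$. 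So the first and last coefficients require no work beyond recalling the definitions of $\rmT$ and $\rmN$.

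The only coefficient needing a short computation is the middle one, $e_{2} = x^{q+1} + x^{q^{2}+1} + x^{q^{2}+q}$. I would compare this with $\rmT(x^{q+1}) = x^{q+1} + (x^{q+1})^{q} + (x^{q+1})^{q^{2}} = x^{q+1} + x^{q^{2}+q} + x^{q^{3}+q^{2}}$. The key point is that $x \in \bbE = \bbF_{q^{3}}$ satisfies $x^{q^{3}} = x$, so the last summand collapses to $x^{1+q^{2}}$, yielding $\rmT(x^{q+1}) = x^{q+1} + x^{q^{2}+q} + x^{q^{2}+1} = e_{2}$. This is the single place where the relation $x^{q^{3}} = x$ is used, and it is entirely routine; there is no real obstacle in the argument.

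For the special case $y \in \bbF$, I would instead use that $y$ is fixed by the Frobenius automorphism $z \mapsto z^{q}$, so $y^{q} = y^{q^{2}} = y$. Consequently $(y+x)^{q} = y + x^{q}$ and $(y+x)^{q^{2}} = y + x^{q^{2}}$, which lets me factor the left-hand side as $(y+x)(y+x)^{q}(y+x)^{q^{2}} = (y+x)^{1+q+q^{2}} = \rmN(y+x)$. Thus the entire lemma reduces to an elementary-symmetric-function identity together with the fact that Frobenius fixes $\bbF$ pointwise, and I expect the whole proof to occupy only a few lines.
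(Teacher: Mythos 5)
Your proof is correct, and it is precisely the routine computation the paper has in mind (the paper omits the proof entirely, saying only that the lemma "can be verified through some simple computations"). The identification of the coefficients with elementary symmetric functions, the use of $x^{q^{3}}=x$ for the middle coefficient, and the Frobenius argument $(y+x)^{q}=y+x^{q}$ for $y\in\bbF$ are exactly the intended steps.
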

\begin{lemma} \label{TTrx2}
  For any $x \in \bbE$, we have
  \[ 
  \rmT(x^{2}) = \rmT(x)^{2}-2\rmT(x^{q+1}). 
  \]
\end{lemma}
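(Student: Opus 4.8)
The plan is to prove this identity by direct expansion, exploiting the fact that $\rmT$ is the relative trace $\bbE \to \bbF$, so its three summands are the Frobenius conjugates $x, x^{q}, x^{q^{2}}$ of $x$, together with the relation $x^{q^{3}} = x$, valid for every $x \in \bbE = \bbF_{q^{3}}$.

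First I would square the defining expression for $\rmT(x)$:
\[
\rmT(x)^{2} = (x + x^{q} + x^{q^{2}})^{2} = (x^{2} + x^{2q} + x^{2q^{2}}) + 2(x^{q+1} + x^{q^{2}+1} + x^{q^{2}+q}).
\]
The first bracketed group is exactly $\rmT(x^{2})$, since $\rmT(x^{2}) = x^{2} + (x^{2})^{q} + (x^{2})^{q^{2}}$. It therefore remains to identify the second group of cross terms with $2\rmT(x^{q+1})$.

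Next I would compute $\rmT(x^{q+1})$ directly. Applying Frobenius to $x^{q+1}$ produces the exponents $q+1$, $q^{2}+q$, and $q^{3}+q^{2}$; here the single nonroutine point enters, namely that the relation $x^{q^{3}} = x$ collapses the last term to $x^{q^{2}+1}$. This yields
\[
\rmT(x^{q+1}) = x^{q+1} + x^{q^{2}+q} + x^{q^{2}+1},
\]
which matches the cross terms from the expansion above. Combining the two computations gives $\rmT(x)^{2} - 2\rmT(x^{q+1}) = \rmT(x^{2})$, as claimed. The only obstacle is the careful bookkeeping of exponents modulo $x^{q^{3}} = x$; once that reduction is applied, the cross terms cancel cleanly and no further calculation is needed.
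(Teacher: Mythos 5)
Your proof is correct and matches the paper's intent exactly: the paper offers no written proof, stating only that the identity ``can be verified through some simple computations,'' and your direct expansion of $\rmT(x)^{2}$ together with the reduction $x^{q^{3}}=x$ to identify the cross terms with $2\rmT(x^{q+1})$ is precisely that computation. Nothing is missing; in particular you correctly handled the one nontrivial step, namely $(x^{q+1})^{q^{2}} = x^{q^{3}+q^{2}} = x^{q^{2}+1}$.
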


\begin{lemma} \label{TTrxqp1}
  Let $x \in \bbE$ such that $\rmT(x) = \rmT(x^{q+1}) = 0$. 
  \begin{enumerate}[label=\rm{(\roman*)}, ref=\rm{(\roman*)}]
  \item If $q \equiv 2 \bmod{3}$, then $x = 0$.
  \item If $q = 3^{h}$, then $x \in \bbF$.
  \end{enumerate}
\end{lemma}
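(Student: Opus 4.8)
The plan is to use Lemma~\ref{TSymPoly} to reinterpret the two hypotheses as a single statement about the characteristic polynomial of $x$ over $\bbF$, and then to analyse the resulting cube equation separately in the two characteristic regimes. Concretely, I would substitute $y = -x$ into the identity of Lemma~\ref{TSymPoly}. The left-hand side $(-x+x)(-x+x^{q})(-x+x^{q^{2}})$ vanishes, while the right-hand side becomes $-x^{3} + \rmT(x)x^{2} - \rmT(x^{q+1})x + \rmN(x)$. Feeding in the hypotheses $\rmT(x) = \rmT(x^{q+1}) = 0$ collapses this to
\[ x^{3} = \rmN(x). \]
The key observation is that $\rmN(x) = x^{q^{2}+q+1} \in \bbF$, so $x^{3}$ lies in the subfield $\bbF$; everything now reduces to understanding the cube map on $\bbE$.

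For part (i), with $q \equiv 2 \bmod 3$, I would note that $q^{3} \equiv 2 \bmod 3$, hence $\gcd(3, q^{3}-1) = 1$, so the map $z \mapsto z^{3}$ is a bijection of $\bbE$. Writing $c = \rmN(x) \in \bbF$, both $x$ and $x^{q}$ cube to $c$ (the latter because $(x^{q})^{3} = c^{q} = c$), so injectivity of cubing forces $x = x^{q}$, i.e.\ $x \in \bbF$. Then $0 = \rmT(x) = 3x$, and since $p \neq 3$ gives $3 \ne 0$ in $\bbF$, we conclude $x = 0$.

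For part (ii), with $q = 3^{h}$, the map $z \mapsto z^{3}$ is the Frobenius automorphism of $\bbE$, so in characteristic $3$ the polynomial $Y^{3} - c$ factors as $(Y-b)^{3}$, where $b$ is the unique cube root of $c = \rmN(x)$. Since $\bbF$ is perfect we have $b = c^{3^{h-1}} \in \bbF$ (indeed $b^{3} = c^{3^{h}} = c^{q} = c$). As $x$ is a root of $Y^{3} - c = (Y-b)^{3}$, it must equal $b$, and therefore $x \in \bbF$.

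I expect the only delicate point to be keeping straight the two opposite behaviours of the cube map: a bijection admitting a single cube root when $3 \nmid q^{3}-1$, versus the purely inseparable Frobenius in characteristic $3$, where $Y^{3}-c$ is a perfect cube and forces a unique root lying already in $\bbF$. Neither step is difficult once the cube equation $x^{3} = \rmN(x)$ is in hand; the genuine content of the argument is the reduction to this equation via Lemma~\ref{TSymPoly}, after which each case is a short verification.
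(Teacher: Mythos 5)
Your proposal is correct and follows essentially the same route as the paper: substituting $y=-x$ into Lemma~\ref{TSymPoly} to obtain $x^{3}=\rmN(x)\in\bbF$, deducing $x\in\bbF$ from $q\not\equiv 1\bmod 3$, and then using $0=\rmT(x)=3x$ to get $x=0$ when $p\neq 3$. The only difference is that you spell out the cube-map analysis (bijectivity when $3\nmid q^{3}-1$ versus Frobenius in characteristic $3$) that the paper compresses into the single sentence ``Since $q \not\equiv 1 \bmod{3}$, this implies that $x \in \bbF$.''
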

\begin{proof}
  Applying Lemma~\ref{TSymPoly} with $y = -x$, we find $\rmN(x)=x^{3}$.
  Therefore $x^{3} \in \bbF$.  Since $q \not\equiv 1 \bmod{3}$, this
  implies that $x \in \bbF$.  It follows that $0 = \rmT(x) = 3x$.
\end{proof}

\begin{corollary}\label{TTrace}
  Let $x \in \bbE$ such that $\rmT(x) = \rmT(x^{2}) = 0$.
  \begin{enumerate}[label=\rm{(\roman*)}, ref=\rm{(\roman*)}]
  \item If $q \equiv 5 \bmod{6}$, then $x = 0$.
  \item If $q =3^{h}$, then $x \in \bbF$.
  \end{enumerate}
\end{corollary}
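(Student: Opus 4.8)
The plan is to deduce this immediately from Lemma~\ref{TTrxqp1} after first re-expressing the hypothesis $\rmT(x^{2}) = 0$ in terms of $\rmT(x^{q+1})$. First I would invoke Lemma~\ref{TTrx2}, which gives
\[
\rmT(x^{2}) = \rmT(x)^{2} - 2\rmT(x^{q+1}).
\]
Substituting the two hypotheses $\rmT(x) = 0$ and $\rmT(x^{2}) = 0$ collapses the right-hand side and leaves $2\rmT(x^{q+1}) = 0$.

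The one point that needs checking is whether the factor $2$ may be cancelled, i.e.\ whether $\bbF$ has odd characteristic in each case. In case (i) the assumption $q \equiv 5 \bmod 6$ forces $q$ to be odd, so $2 \neq 0$ in $\bbF$; in case (ii) the characteristic is $3$, so again $2 \neq 0$. In either case we therefore conclude $\rmT(x^{q+1}) = 0$, and hence $x$ satisfies the joint hypotheses $\rmT(x) = \rmT(x^{q+1}) = 0$ required by Lemma~\ref{TTrxqp1}.

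It then remains only to match the two cases against the two parts of that lemma. For (i) I would note that $q \equiv 5 \bmod 6$ implies $q \equiv 2 \bmod 3$, so Lemma~\ref{TTrxqp1}(i) applies and yields $x = 0$. For (ii), $q = 3^{h}$ is precisely the hypothesis of Lemma~\ref{TTrxqp1}(ii), which yields $x \in \bbF$. I expect no genuine obstacle here: the argument is a direct corollary, and the only things to be careful about are the (harmless) cancellation of $2$ in odd characteristic and the elementary implication $q \equiv 5 \bmod 6 \Rightarrow q \equiv 2 \bmod 3$.
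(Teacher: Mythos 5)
Your proposal is correct and is essentially identical to the paper's own proof, which simply states that the corollary ``follows from Lemma~\ref{TTrx2} and Lemma~\ref{TTrxqp1}''---exactly the chain you spell out. The details you verify explicitly (cancelling the factor $2$ in odd characteristic, and $q \equiv 5 \bmod{6} \Rightarrow q \equiv 2 \bmod{3}$) are the intended, routine checks left implicit in the paper.
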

\begin{proof}
  This follows from Lemma~\ref{TTrx2} and Lemma~\ref{TTrxqp1}.
\end{proof}

\subsection{Cyclic model of the projective plane}\label{Smu}
Let $\mu = \alpha^{q-1}$, so $| \mu | = q^{2} +q +1$.  It is then
clear that $\rmN(\mu) = \mu^{q^{2} +q +1} = 1$.  

\begin{lemma}\label{muLI}
  The elements of $\langle \mu \rangle \subset \bbE^{*}$ are pairwise
  linearly independent over $\bbF$. 
\end{lemma}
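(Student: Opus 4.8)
The plan is to translate $\bbF$-linear independence into a divisibility statement about exponents of $\alpha$. Two distinct elements $\mu^{i}$ and $\mu^{j}$ of $\langle \mu \rangle$ (say with $0 \le i < j \le q^{2}+q$) are linearly dependent over $\bbF$ exactly when one is an $\bbF^{*}$-multiple of the other; since both are nonzero, this happens if and only if $\mu^{j-i} \in \bbF^{*}$. Hence it suffices to prove that $\mu^{k} \notin \bbF^{*}$ for every $k$ with $1 \le k \le q^{2}+q$, and the pairwise independence of the whole set follows at once.

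Next I would rewrite this condition purely in terms of the primitive element $\alpha$. Recall $\mu = \alpha^{q-1}$, so $\mu$ has order $(q^{3}-1)/(q-1) = q^{2}+q+1$, consistent with the preceding remark, and $\bbF^{*} = \langle \omega \rangle = \langle \alpha^{q^{2}+q+1} \rangle$. Thus $\mu^{k} = \alpha^{(q-1)k}$ lies in $\bbF^{*}$ if and only if $(q^{2}+q+1) \mid (q-1)k$.

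The heart of the argument is a single gcd computation. Reducing modulo $q-1$ we have $q \equiv 1$, so $q^{2}+q+1 \equiv 3 \bmod (q-1)$, which gives $\gcd(q-1,\,q^{2}+q+1) = \gcd(q-1,3)$. This is where the standing hypothesis $q \not\equiv 1 \bmod 3$ of Section~\ref{SAlg} is used: it guarantees $3 \nmid (q-1)$, and therefore $\gcd(q-1,\,q^{2}+q+1) = 1$. Consequently $(q^{2}+q+1) \mid (q-1)k$ forces $(q^{2}+q+1) \mid k$, which is impossible for $1 \le k \le q^{2}+q$. Hence $\mu^{k} \notin \bbF^{*}$ on this range, and the lemma follows.

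The only real obstacle is conceptual rather than computational: one must recognize that the entire statement collapses to the coprimality of $q-1$ and $q^{2}+q+1$, and that this coprimality is precisely what the assumption $q \not\equiv 1 \bmod 3$ secures. Indeed, were $3 \mid (q-1)$ instead, some nontrivial power $\mu^{k}$ would fall into $\bbF^{*}$ and the conclusion would fail, so tracking this hypothesis carefully through the gcd is the key step.
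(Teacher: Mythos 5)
Your proof is correct, but it takes a different route from the paper's. You reduce the statement to showing $\mu^{k}\notin\bbF^{*}$ for $1\le k\le q^{2}+q$, and then work entirely inside the cyclic group $\bbE^{*}$: writing $\mu^{k}=\alpha^{(q-1)k}$ and $\bbF^{*}=\langle\alpha^{q^{2}+q+1}\rangle$, the question becomes whether $(q^{2}+q+1)\mid(q-1)k$, which your gcd computation $\gcd(q-1,\,q^{2}+q+1)=\gcd(q-1,3)=1$ rules out. The paper instead argues via the norm map: from $\mu^{i}=\lambda\mu^{j}$ and $\rmN(\mu)=1$ it deduces $1=\rmN(\lambda)=\lambda^{3}$, and since $q\not\equiv 1\bmod 3$ makes cubing injective on $\bbF^{*}$, it concludes $\lambda=1$, a contradiction. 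The two arguments rest on exactly the same arithmetic fact, $3\nmid q-1$, surfacing in different guises: in the paper it appears as the injectivity of $x\mapsto x^{3}$, in yours as the coprimality of $q-1$ and $q^{2}+q+1$ (indeed $\rmN(\lambda)=\lambda^{q^{2}+q+1}=\lambda^{3}$ for $\lambda\in\bbF^{*}$ is precisely your reduction of $q^{2}+q+1$ modulo $q-1$). The paper's proof is shorter and reuses machinery already in play throughout the section (the norm and the identity $\rmN(\mu)=1$), whereas yours is more elementary and self-contained, using only the structure of a cyclic group; your closing observation that the conclusion genuinely fails when $3\mid q-1$ is a nice bonus that the paper's phrasing does not make explicit.
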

\begin{proof}
  Assume otherwise, so that we have $\mu^{i} = \lambda \mu^{j}$, with
  $i \neq j \pmod{q^{2} +q +1}$, for some $\lambda \in \bbF$.
  Then 
  \[ 
  1 = \rmN(\mu^{i}) = \rmN(\lambda) \rmN(\mu^{j}) = \lambda^{3}. 
  \]
  Since $q \not \equiv 1 \bmod{3}$, this implies that $\lambda = 1$ 
  and so $\mu^{i} = \mu^{j}$, a contradiction.
\end{proof}

We can consider the elements of  $\langle \mu \rangle$ to represent
the points of a  projective plane $\pi$, with underlying vector space
$\bbE$ over $\bbF$.  The lines of $\pi$ correspond to solutions to the
equation $\rmT(\lambda x) = 0$ for a fixed $\lambda \in \bbE^{*}$,
which is an $\bbF$-linear map from $\bbE \to \bbF$.  The map $x
\mapsto \mu x$ gives a collineation of $\pi$ which  acts sharply
transitively on the points, as well as on the lines of $\pi$.

\begin{lemma}\label{SquareConic}
  The map $x \mapsto x^{2}$ induces a permutation on the points of
  $\pi$; the preimage of a line under this map is a conic.
\end{lemma}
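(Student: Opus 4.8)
The plan is to treat the two assertions separately, exploiting the cyclic (Singer) model established above, in which the points of $\pi$ are the elements of $\langle \mu\rangle$ and the line determined by $\lambda \in \bbE^{*}$ is the point set $\ell_{\lambda} = \{\,x : \rmT(\lambda x) = 0\,\}$ (read projectively, i.e.\ up to scaling by $\bbF^{*}$). For the first assertion, I would first note that squaring descends to a well-defined map on points: if $x' = c x$ with $c \in \bbF^{*}$, then $(x')^{2} = c^{2}x^{2}$ and $c^{2} \in \bbF^{*}$, so the point represented by $x^{2}$ depends only on the point represented by $x$. In the model, squaring sends $\mu^{i}$ to $\mu^{2i}$, and since the $\mu^{j}$ represent distinct points (Lemma~\ref{muLI}) this is the map $i \mapsto 2i$ on $\bbZ/(q^{2}+q+1)\bbZ$. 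Because $q \equiv 1 \bmod 4$ is odd, $q^{2}+q+1$ is odd, so $2$ is a unit modulo $q^{2}+q+1$ and the map is a bijection; hence $x \mapsto x^{2}$ induces a permutation of the points of $\pi$.

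For the second assertion, observe that the preimage of $\ell_{\lambda}$ under squaring is the point set $\{\,x : \rmT(\lambda x^{2}) = 0\,\}$, namely the projective zero locus of the function $Q_{\lambda}(x) = \rmT(\lambda x^{2})$ on the $3$-dimensional $\bbF$-space $\bbE$. I would verify that $Q_{\lambda}$ is a quadratic form over $\bbF$: it is homogeneous of degree $2$ since $Q_{\lambda}(cx) = c^{2}Q_{\lambda}(x)$ for $c \in \bbF$, and its associated symmetric bilinear form is $B_{\lambda}(x,y) = Q_{\lambda}(x+y) - Q_{\lambda}(x) - Q_{\lambda}(y) = 2\,\rmT(\lambda x y)$, which is $\bbF$-bilinear. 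Thus the preimage is exactly the quadric of $\pg(2,q)$ cut out by the ternary quadratic form $Q_{\lambda}$, and it remains only to identify this quadric as a (nonsingular) conic.

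The crux — and the one step that is not purely formal — is to show that $Q_{\lambda}$ is nondegenerate, which rules out the degenerate alternatives (a pair of lines, a single line, a point, or the empty set). Since $q$ is odd, $2$ is invertible, so nondegeneracy of $Q_{\lambda}$ is equivalent to nondegeneracy of $B_{\lambda}$, i.e.\ to the pairing $(x,y) \mapsto \rmT(\lambda x y)$ being nondegenerate on $\bbE$. Here $\rmT$ is the trace of the separable extension $\bbE/\bbF$, so the pairing $(u,y) \mapsto \rmT(u y)$ is nondegenerate; composing with the bijection $x \mapsto \lambda x$ (valid as $\lambda \neq 0$) preserves nondegeneracy. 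Therefore $Q_{\lambda}$ is a nondegenerate ternary quadratic form over $\bbF_{q}$ with $q$ odd, and its projective zero locus in $\pg(2,q)$ is a nonsingular conic. This proves that the preimage of $\ell_{\lambda}$ is a conic, as claimed. I expect the nondegeneracy verification to be the only real content; everything else reduces to the oddness of $q$ and the separability of $\bbE/\bbF$.
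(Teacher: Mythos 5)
Your proof is correct, and for the second assertion it takes a genuinely different route from the paper. The first assertion is handled the same way in both: squaring is $i \mapsto 2i$ on $\bbZ/(q^{2}+q+1)\bbZ$, and $2$ is a unit because $q^{2}+q+1$ is odd. For the conic statement, the paper leans on the permutation property: since squaring is a bijection on points, the form $y \mapsto \rmT(\lambda y^{2})$ has exactly $q+1$ isotropic points, and the paper concludes from this count that the isotropic set is a nondegenerate conic. You instead prove nondegeneracy of $Q_{\lambda}(x) = \rmT(\lambda x^{2})$ directly: its polar form is $2\rmT(\lambda xy)$, which is nondegenerate because the trace pairing of the separable extension $\bbE/\bbF$ is nondegenerate, multiplication by $\lambda \neq 0$ is bijective, and $2$ is invertible for odd $q$; the zero locus of a nondegenerate ternary quadratic form over $\bbF_{q}$ is then a nonsingular conic. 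Your route is arguably tighter: a count of $q+1$ zeros alone does not distinguish a nonsingular conic (rank $3$) from a repeated line (rank $1$), both of which have exactly $q+1$ points in $\pg(2,q)$, so the paper's counting step implicitly needs the nondegeneracy you establish. What the paper's approach buys is brevity and reuse of the permutation statement; what yours buys is that the two assertions become logically independent and the degenerate cases are ruled out outright, paralleling the nondegeneracy-of-the-trace-form argument that the paper itself gives (via an explicit Gram matrix) in the lemma immediately following this one.
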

\begin{proof}
  To see that this map is a permutation on the points of $\pi$, assume 
  that $\mu^{i}$ and $\mu^{j}$ get mapped to the same point, so
  $\mu^{2i} = \mu^{2j}$. This means that $2i \equiv 2j
  \bmod{q^{2}+q+1}$, which is odd.  This can only happen if $i \equiv
  j \bmod{q^{2}+q+1}$, in which case $\mu^{i} = \mu^{j}$. 

  Consider a line $\ell$ of $\pi$ described by the equation
  $\rmT(\lambda x) = 0$ for some $\lambda \in \bbE^{*}$.  The map $y
  \mapsto   \rmT(\lambda y^{2})$ defines a quadratic form; there are
  precisely $q+1$ isotropic points (the preimages of the points in
  $\ell$ under the map $x \mapsto x^{2}$) and so the isotropic points
  form a nondegenerate conic.
\end{proof}

\begin{lemma} 
  The bilinear form on $\bbE$ given by $(x,y) \mapsto \rmT(xy)$ is
  nondegenerate, and has square discriminant.
\end{lemma}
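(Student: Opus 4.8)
The claim has two parts: that the bilinear form $(x,y) \mapsto \rmT(xy)$ on $\bbE$ (viewed as a $3$-dimensional $\bbF$-vector space) is nondegenerate, and that its discriminant is a square in $\bbF$. The plan is to compute everything explicitly with respect to a convenient $\bbF$-basis of $\bbE$, since the statement is ultimately about a $3 \times 3$ Gram matrix over $\bbF$.

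For nondegeneracy, the quickest route is the following. Suppose $x \in \bbE$ satisfies $\rmT(xy) = 0$ for all $y \in \bbE$. Since $\rmT$ is the absolute trace from $\bbE = \bbF_{q^3}$ down to $\bbF = \bbF_q$ (composed appropriately), and the trace form of a finite separable field extension is always nondegenerate, this forces $x = 0$. Concretely, $y \mapsto \rmT(xy)$ is the zero functional only when $x=0$, because the trace map $\rmT : \bbE \to \bbF$ is surjective (hence nonzero), and if $x \neq 0$ then $y \mapsto xy$ is a bijection of $\bbE$, so $\rmT(xy)$ already ranges over all of $\bbF$ and in particular is nonzero for some $y$. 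This handles nondegeneracy cleanly without any computation.

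For the discriminant, I would fix a basis and compute the Gram matrix $M = [\rmT(e_i e_j)]$, then show $\det M$ is a square in $\bbF$. The cleanest basis-free approach is to recall the classical formula for the discriminant of the trace form of $\bbF_{q^3}/\bbF$ relative to a power basis $\{1, \theta, \theta^2\}$: it equals the discriminant of the minimal polynomial of $\theta$, which is a perfect square times the square of a Vandermonde-type determinant. More precisely, if $\theta$ has conjugates $\theta, \theta^q, \theta^{q^2}$, then the Gram determinant $\det[\rmT(\theta^{i+j})]_{0 \le i,j \le 2}$ factors as $\prod_{i<j}(\theta^{q^i} - \theta^{q^j})^2$, which is manifestly a square in $\bbE$; the point is then to argue that this quantity actually lies in $\bbF$ and is a square there. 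Since the discriminant is $\bbF$-valued (being a determinant of an $\bbF$-matrix) and equals a square in the separable closure, one uses that for the degree-$3$ (odd, and here with $q \equiv 1 \bmod 4$) cyclic extension the Galois group is $\bbZ/3$, whose action fixes the square root, so the square root already lies in $\bbF$.

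The main obstacle I anticipate is the discriminant-is-a-square claim rather than the nondegeneracy, which is routine. The subtlety is that a determinant being a square in the algebraic closure does not by itself guarantee it is a square in the ground field $\bbF$; one must exploit the specific structure of the cyclic cubic extension $\bbE/\bbF$. The key observation is that $\bbE/\bbF$ is a Galois extension with cyclic group of order $3$, and the Vandermonde product $\prod_{i<j}(\theta^{q^i}-\theta^{q^j})$ is, up to sign, fixed by the Frobenius generator (a $3$-cycle is an even permutation, so it fixes the discriminant's square root), hence lies in $\bbF$; its square is then the discriminant and is a square in $\bbF$ by construction. I expect the hypotheses $q \not\equiv 1 \bmod 3$ and $q \equiv 1 \bmod 4$ to enter precisely here, guaranteeing the extension is of the right cyclic type so the Frobenius acts as a $3$-cycle on the conjugates and the parity argument goes through.
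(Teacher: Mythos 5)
Your proof is correct, but it takes a genuinely different route from the paper's. The paper proves both claims at once by an explicit computation: using Corollary~\ref{TTrace} (and hence the standing hypotheses $q \equiv 5, 9 \bmod{12}$) it produces an element $a$ with $\rmT(a)=2$, $\rmT(a^{2})=0$, $\rmT(a^{q+1})=2$, takes the spanning set $\{a,\ a^{q},\ 2-a-a^{q}\}$, and computes the Gram matrix to be the matrix with $0$ on the diagonal and $2$ elsewhere, whose determinant is $16$ --- a nonzero square, so the vectors form a basis and the form is nondegenerate with square discriminant. You instead split the two claims: nondegeneracy follows from the standard separability argument (for $x \neq 0$, the functional $y \mapsto \rmT(xy)$ is nonzero since multiplication by $x$ is a bijection and $\rmT$ is surjective), and the discriminant is handled via the power-basis factorization $\det\bigl[\rmT(\theta^{i+j})\bigr]_{0 \le i,j \le 2} = \bigl(\prod_{i<j}(\theta^{q^{i}}-\theta^{q^{j}})\bigr)^{2}$, where the Vandermonde square root is fixed by the Frobenius because a $3$-cycle is an even permutation, hence lies in $\bbF$. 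Your route is more conceptual and strictly more general: it works for every prime power $q$ (indeed for any cyclic cubic extension), whereas the paper's computation leans on its congruence hypotheses. The paper's route, in exchange, is completely elementary and constructs exactly the kind of basis of elements with $\rmT(x^{2})=0$ that Corollary~\ref{Discabc} subsequently exploits, so the explicit computation is not wasted effort in context.

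One point to correct: your closing speculation that the hypotheses $q \not\equiv 1 \bmod{3}$ and $q \equiv 1 \bmod{4}$ are needed to guarantee that the extension is ``of the right cyclic type'' is mistaken. The extension $\bbE/\bbF = \bbF_{q^{3}}/\bbF_{q}$ is cyclic of degree $3$, with Frobenius acting as a $3$-cycle on the conjugates, for \emph{every} prime power $q$; your argument never uses those hypotheses, and the lemma in fact holds without them. In the paper's proof the hypotheses enter only through Corollary~\ref{TTrace}, which is what guarantees $\rmT(v) \neq 0$ for the chosen trace-zero square $v^{2}$; your approach shows this reliance is an artifact of the method, not of the statement.
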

\begin{proof}
  There are $q^{2}$ elements of $\bbE$ with trace $0$, so there exists 
  an element of $\bbE^{*} \setminus \bbF^{*}$ having trace $0$; after 
  multiplying by a nonsquare in $\bbF^{*}$ if necessary, we can assume 
  that this element is a square.  So we have $v \in \bbE^{*} \setminus 
  \bbF^{*}$ with $\rmT(v^{2}) = 0$. By Corollary~\ref{TTrace}, we know
  that $\rmT(v) \neq 0$. Let $a = \lambda v$, where $\lambda$ is
  chosen so that $\rmT(a) = 2$.  We   have that $\rmT(a^{2}) = 0$, and
  it follows from Lemma~\ref{TTrx2} that $\rmT(a^{q+1}) = 2$.  Let $b
  = a^{q}$ and $c = 2-a-a^{q}$.  Then 
  \[
  \rmT(c^{2}) = 4\rmT(1)-4\rmT(a)+2\rmT(a^{q+1}) = 12-8-8+4=0. 
  \] 
  We can also compute
  \[ 
  \rmT(ac) = \rmT(bc) = 2\rmT(a)-\rmT(a^{q+1}) = 2. 
  \]
  The Gram matrix for our bilinear form is given by
  \[ 
  \begin{bmatrix}
    \rmT(a^{2}) & \rmT(ab) & \rmT(ac)\\
    \rmT(ab) & \rmT(b^{2}) & \rmT(bc)\\
    \rmT(ac) & \rmT(bc) & \rmT(c^{2})
  \end{bmatrix} 
  = \begin{bmatrix}
    0 & 2 & 2\\
    2& 0 & 2\\
    2& 2& 0
  \end{bmatrix}, 
  \]
  which has determinant $16$.
\end{proof}

\begin{corollary}\label{Discabc}
  Let $a,b,c \in \bbE^{*}$ be distinct, with
  $\rmN(a)=\rmN(b)=\rmN(c)=1$ and $\rmT(a^{2}) = \rmT(b^{2}) =
  \rmT(c^{2}) = 0$.  Then $2\rmT(ab)\rmT(ac)\rmT(bc)$ is always a
  nonzero square.  
\end{corollary}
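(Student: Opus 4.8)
The plan is to recognize the number $2\rmT(ab)\rmT(ac)\rmT(bc)$ as the determinant of a Gram matrix and then to reduce to the preceding lemma by a change-of-basis argument. Writing $B(x,y) = \rmT(xy)$ for the bilinear form studied just above, the hypotheses $\rmT(a^{2}) = \rmT(b^{2}) = \rmT(c^{2}) = 0$ say that the Gram matrix of $B$ with respect to the triple $(a,b,c)$ is
\[
G = \begin{bmatrix} 0 & \rmT(ab) & \rmT(ac) \\ \rmT(ab) & 0 & \rmT(bc) \\ \rmT(ac) & \rmT(bc) & 0 \end{bmatrix},
\]
and expanding along the first row gives $\det G = 2\rmT(ab)\rmT(ac)\rmT(bc)$. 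Thus it suffices to show that $\det G$ is a nonzero square in $\bbF$.

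First I would show that $a,b,c$ are linearly independent over $\bbF$, so that $(a,b,c)$ is genuinely a basis of $\bbE$ and $G$ is an honest Gram matrix. Since $\rmN(a) = \rmN(b) = \rmN(c) = 1$, each of $a,b,c$ lies in $\langle\mu\rangle$, so they represent three \emph{distinct} points of the projective plane $\pi$. Moreover $\rmT(a^{2}) = 0$ says precisely that the point $a$ is carried by $x \mapsto x^{2}$ onto the line $\rmT(y) = 0$; by Lemma~\ref{SquareConic} the preimage of this line is a nondegenerate conic $\cO$, so $a,b,c$ are three distinct points of $\cO$. As a line meets a conic in at most two points, no three points of $\cO$ are collinear, and hence the vectors $a,b,c$ are linearly independent over $\bbF$.

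With independence in hand, let $(e_{1},e_{2},e_{3})$ be any basis of $\bbE$ over $\bbF$, let $G_{0}$ be the Gram matrix of $B$ in that basis, and let $P$ be the (invertible) matrix expressing $(a,b,c)$ in terms of $(e_{1},e_{2},e_{3})$. Then $G = P^{\top} G_{0} P$ yields $\det G = (\det P)^{2}\,\det G_{0}$, with $\det P \neq 0$. By the preceding lemma the form $B$ is nondegenerate with square discriminant; equivalently $\det G_{0}$ is a nonzero square in $\bbF$ (indeed its proof exhibits a basis with Gram determinant $16$). Therefore $\det G = (\det P)^{2}\,\det G_{0}$ is a product of nonzero squares, hence itself a nonzero square, which is the claim.

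The routine part is the $3\times 3$ determinant expansion and the bookkeeping of the change of basis. The main obstacle is the linear independence of $a,b,c$: the clean route is the geometric observation above, that the three points lie on the conic $\cO$ obtained as the $x \mapsto x^{2}$ preimage of a line, together with the no-three-collinear property of conics in $\pg(2,q)$. One could instead try to argue independence purely algebraically from the trace and norm conditions, but the conic argument via Lemma~\ref{SquareConic} is shorter and avoids case analysis.
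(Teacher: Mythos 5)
Your proposal is correct and follows essentially the same route as the paper: noncollinearity of $a,b,c$ via Lemma~\ref{SquareConic} (since $a^{2},b^{2},c^{2}$ lie on the line $\rmT(x)=0$, the three points lie on a conic), then identifying $2\rmT(ab)\rmT(ac)\rmT(bc)$ as the Gram determinant of the form $(x,y)\mapsto\rmT(xy)$ in the basis $\{a,b,c\}$ and invoking the preceding lemma's nondegeneracy and square discriminant. Your write-up merely makes explicit the change-of-basis bookkeeping $\det G = (\det P)^{2}\det G_{0}$ that the paper leaves implicit.
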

\begin{proof}
  We have that $a^{2}$, $b^{2}$ and $c^{2}$ are collinear, so by
  Lemma~\ref{SquareConic}, $a$, $b$, and $c$ are noncollinear; this
  implies that $\{a, b, c\}$ forms a basis for $\bbE$ over $\bbF$.
  Therefore we have
  \[  
  \begin{vmatrix}
    \rmT(a^{2}) & \rmT(ab) & \rmT(ac)\\
    \rmT(ab) & \rmT(b^{2}) & \rmT(bc)\\
    \rmT(ac) & \rmT(bc) & \rmT(c^{2})
  \end{vmatrix} = 2\rmT(ab) \rmT(ac) \rmT(bc)
  \]
  is a square, and nonzero since the form is nondegenerate.
\end{proof}

\subsection{Results on $\kappa(x)$}\label{Skappa}
Let $\chi_{4}$ be the \textbf{quartic character} of $\bbE$; this is
defined by $\chi_{4}(0) = 0$, $\chi_{4}(\alpha^{n}) = \rmi^{n}$ (and
is dependent on our choice of a primitive element $\alpha$).  Notice 
that $\chi_{4}(x)^{2} = \chi_{2}(x)$, where $\chi_{2}$ is the
quadratic character of $\bbE$.  We will assume that $\omega =
\alpha^{-(q^{2}+q+1)}$, so that $\chi_{4}(\omega) =
\rmi^{-(q^{2}+q+1)} = \rmi$.
\begin{definition}\label{Dkappa}
  For $z \in \{ 1, \rmi, -1, -\rmi \}$, we define functions
  $\kappa_{z}  : \bbE^{*} \to \bbZ$ by 
  \[ 
  \kappa_{z}(x) = |\{i : 0 \leq i < q^{2} +q +1 \ | \
  \chi_{4}(\rmT(\mu^{i}x))\overline{\chi_{4}}(\rmT(\mu^{-i}x)) =z\}|. 
  \]
\end{definition}
\begin{lemma}
  For any $\lambda \in \bbF^{*}$, $z \in \{ 1, \rmi, -1, -\rmi \}$, we
  have $\kappa_{z}( \lambda x) = \kappa_{z}(x).$
\end{lemma}
\begin{proof}
  Since $\chi_{4}(\lambda) \overline{\chi}_{4}(\lambda) = 1$ we have
  that, for $0 \leq i < q^{2} +q +1$,  
  \begin{align*} 
    \chi_{4}(\rmT(\mu^{i} \lambda x))\overline{\chi_{4}}(\rmT(\mu^{-i}
    \lambda x)) & = \chi_{4}(\lambda) \chi_{4}(\rmT(\mu^{i}x))
    \overline{\chi}_{4}(\lambda) \overline{\chi_{4}}(\rmT(\mu^{-i}x))\\ 
    & = \chi_{4}(\rmT(\mu^{i}x))\overline{\chi_{4}}(\rmT(\mu^{-i}x)). 
  \end{align*}
\end{proof}

\begin{lemma} \label{kappazzbar}
  For every $x \in \bbE^{*}$, $\kappa_{i}(x) = \kappa_{-i}(x)$. 
\end{lemma}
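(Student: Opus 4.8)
The plan is to exhibit an explicit involution on the index set $\{0,1,\dots,q^{2}+q\}$ that interchanges the two counts. For $x \in \bbE^{*}$ and $0 \le i < q^{2}+q+1$, abbreviate
\[
F(i) = \chi_{4}(\rmT(\mu^{i}x))\,\overline{\chi_{4}}(\rmT(\mu^{-i}x)),
\]
so that by Definition~\ref{Dkappa} the quantity $\kappa_{z}(x)$ is exactly the number of indices $i$ in this range with $F(i) = z$. The idea is that the map $i \mapsto -i$ (reduction modulo $q^{2}+q+1$) should turn an index counted by $\kappa_{\rmi}(x)$ into one counted by $\kappa_{-\rmi}(x)$, and conversely.

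The key computation is that $F(-i) = \overline{F(i)}$ for all $i$. Indeed, using that the complex conjugate of a value of $\chi_{4}$ is the value of the conjugate character $\overline{\chi_{4}}$, one has
\[
\overline{F(i)} = \overline{\chi_{4}}(\rmT(\mu^{i}x))\,\chi_{4}(\rmT(\mu^{-i}x)),
\]
while directly from the definition
\[
F(-i) = \chi_{4}(\rmT(\mu^{-i}x))\,\overline{\chi_{4}}(\rmT(\mu^{i}x)).
\]
Since multiplication in $\bbC$ commutes, these two expressions agree, proving $F(-i) = \overline{F(i)}$.

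Next I would note that $i \mapsto -i \bmod (q^{2}+q+1)$ is a bijection of the index set onto itself (with $\mu^{-i}$ interpreted as $\mu^{q^{2}+q+1-i}$). Because $\overline{\rmi} = -\rmi$, the relation $F(-i) = \overline{F(i)}$ shows that $F(i) = \rmi$ holds if and only if $F(-i) = -\rmi$. Hence this bijection restricts to a bijection between $\{i : F(i) = \rmi\}$ and $\{i : F(i) = -\rmi\}$, giving $\kappa_{\rmi}(x) = \kappa_{-\rmi}(x)$, which is the claim.

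There is essentially no hard obstacle here; the only points requiring a little care are bookkeeping ones. First, one must confirm that $i \mapsto -i$ genuinely permutes the chosen representatives $\{0,\dots,q^{2}+q\}$, with the fixed point $i=0$ causing no trouble since $F(0) \in \{0,1\}$ is real and hence never equal to $\pm\rmi$. Second, indices for which $\rmT(\mu^{i}x)=0$ or $\rmT(\mu^{-i}x)=0$ give $F(i)=0$; these contribute to none of the $\kappa_{z}(x)$ with $z \in \{1,\rmi,-1,-\rmi\}$, and the identity $F(-i)=\overline{F(i)}$ sends this zero set to itself, so it automatically respects the restriction of the bijection to the indices that actually matter.
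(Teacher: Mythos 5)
Your proof is correct and takes essentially the same approach as the paper: your involution $i \mapsto -i \bmod (q^{2}+q+1)$ is precisely the paper's substitution $j^{\prime} = q^{2}+q+1-j$, and both arguments rest on the observation that this substitution replaces $\chi_{4}(\rmT(\mu^{i}x))\overline{\chi_{4}}(\rmT(\mu^{-i}x))$ by its complex conjugate, hence interchanges the values $\rmi$ and $-\rmi$. The additional bookkeeping you include (the fixed point $i=0$ and the indices where the product vanishes) is sound but not strictly necessary, since the conjugation identity handles those cases automatically.
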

\begin{proof}
  Putting $j^{\prime} = q^{2} +q +1 -j$, we have
  \[
  \chi_{4}(\rmT(\mu^{j}x))\overline{\chi_{4}}(\rmT(\mu^{-j}x))  
  = \chi_{4}(\rmT(\mu^{-j^{\prime}}x))\overline{\chi_{4}}(\rmT(\mu^{j^{\prime}}x)),
  \]
  so the values of $j$ for which
  \[
  \chi_{4}(\rmT(\mu^{j}x))\overline{\chi_{4}}(\rmT(\mu^{-j}x)) =i
  \]
  are in one-to-one correspondence with the values of $j^{\prime}$ for
  which
  \[
  \chi_{4}(\rmT(\mu^{j^{\prime}}x))\overline{\chi_{4}}(\rmT(\mu^{-j^{\prime}}x))
  = -i. 
  \]
\end{proof}

\begin{theorem} \label{kappa1minus}
  For every $x \in \bbE^{*}$, we have
  \[
  \kappa_{1}(x) - \kappa_{-1}(x) = q \cdot \chi_{2}(x)
  \chi_{2}(\rmT(x)). 
  \]
\end{theorem}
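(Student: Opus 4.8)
The plan is to collapse the four counts $\kappa_{z}(x)$ into a single character sum. For $u \in \langle \mu \rangle$ put $f(u) = \chi_{4}(\rmT(ux))\,\overline{\chi_{4}}(\rmT(u^{-1}x))$, so that $f(u) \in \{0,1,\rmi,-1,-\rmi\}$, with $f(u) = 0$ exactly when $\rmT(ux) = 0$ or $\rmT(u^{-1}x) = 0$. Grouping the $q^{2}+q+1$ terms by value gives $\sum_{i=0}^{q^{2}+q} f(\mu^{i}) = (\kappa_{1}(x) - \kappa_{-1}(x)) + \rmi\,(\kappa_{\rmi}(x) - \kappa_{-\rmi}(x))$, and by Lemma~\ref{kappazzbar} the imaginary part vanishes. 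Hence it suffices to evaluate the torus sum $S(x) := \sum_{u \in \langle\mu\rangle} \chi_{4}(\rmT(ux))\,\overline{\chi_{4}}(\rmT(u^{-1}x))$ and to show $S(x) = q\,\chi_{2}(x)\chi_{2}(\rmT(x))$. Replacing $u$ by $u^{-1}$ permutes $\langle\mu\rangle$ and conjugates $S(x)$, so $S(x)$ is real, matching the target; and since $q \equiv 1 \bmod 4$ and $\chi_{4}(\omega) = \rmi$, the restriction $\eta := \chi_{4}|_{\bbF}$ is an honest quartic character of $\bbF$, so that the two arguments $\rmT(ux),\rmT(u^{-1}x) \in \bbF$ are being fed to a character of $\bbF$.

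Next I would linearize. Using the Gauss-sum inversion $\eta(a) = G_{h}(\overline{\eta})^{-1}\sum_{s \in \bbF}\overline{\eta}(s)\,\zeta^{\bbT_{h}(sa)}$, which holds for every $a \in \bbF$ (at $a=0$ both sides vanish because $\overline{\eta}$ is nontrivial), together with $\bbT_{h}(s\,\rmT(ux)) = \bbT_{3h}(sux)$, I turn each trace-of-quartic factor into an additive character and obtain $S(x) = \frac{1}{\eta(-1)q}\sum_{s,t \in \bbF^{*}}\overline{\eta}(s)\eta(t)\sum_{\rmN(u)=1}\zeta^{\bbT_{3h}(x(su + tu^{-1}))}$, where I used Theorem~\ref{GSumProd} in the form $G_{h}(\eta)G_{h}(\overline{\eta}) = \eta(-1)q$. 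I would then detect the torus by averaging norm-pullbacks of characters of $\bbF^{*}$, $\mathbf{1}_{\rmN(u)=1} = \frac{1}{q-1}\sum_{\psi}\psi(\rmN(u))$; since each $\psi \circ \rmN$ is a character of $\bbE^{*}$, this passes from the torus to full sums over $\bbE$ and produces Gauss sums $G_{3h}$ over $\bbE$. The factor $\frac{1}{q-1}$ introduced here is precisely the one cancelled by the $(q-1)$ appearing in Lemma~\ref{GSumRatio}, and since $|G_{3h}(\chi)/G_{h}(\chi)| = q$ this is exactly where the bare factor $q$ in the conclusion originates.

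The hard part is that the two inner factors involve $u$ and $u^{-1}$ at once, so after linearization one is left with a genuinely coupled, Kloosterman-type sum over the torus that does not split into independent single-variable sums; this is the main obstacle. To break it I would use that every character in play is built from $\chi_{4}$ and that $[\bbE:\bbF]=3$ is odd: the Davenport--Hasse relation (Theorem~\ref{DavHasse}) folds the quartic character along the cubic extension and lets the quartic Gauss sums recombine into quadratic ones, collapsing the expression onto the single ratio $G_{3h}(\chi_{2})/G_{h}(\chi_{2})$ governed by Lemma~\ref{GSumRatio}. By Theorem~\ref{GSum2Eval} this ratio equals $q$ in all relevant cases, the hypotheses $q\equiv 1\bmod 4$ and $q\not\equiv 1\bmod 3$ (the latter forcing $h$ even when $p\equiv 3\bmod4$) being exactly what removes the sign. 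The surviving multiplicative weights should evaluate to $\chi_{2}(x)$ and $\chi_{2}(\rmT(x))$: the first through the bare factor $\chi(x)$ of Lemma~\ref{GSumRatio} after the relevant substitution, the second as a residual quadratic Gauss sum over $\bbF$ evaluated at $\rmT(x)$. In particular the factor $\chi_{2}(\rmT(x))$ makes $S(x)$ vanish when $\rmT(x)=0$, the degenerate case, which I would dispose of separately; and to pin the overall sign I expect to invoke Corollary~\ref{Discabc} together with the conic of Lemma~\ref{SquareConic}, whose square discriminant resolves the sign ambiguity of the quadratic Gauss sum, giving $S(x)=q\,\chi_{2}(x)\chi_{2}(\rmT(x))$ as claimed.
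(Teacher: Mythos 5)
Your reduction of the statement to the evaluation of the torus sum $S(x) = \sum_{u \in \langle\mu\rangle} \chi_{4}(\rmT(ux))\,\overline{\chi_{4}}(\rmT(u^{-1}x))$ via Lemma~\ref{kappazzbar} is correct and matches the paper's starting point, and your linearization of the two quartic factors by Gauss-sum inversion over $\bbF$ is legitimate. But the proof then stalls exactly at the point you yourself flag as ``the main obstacle'': after linearization and norm-detection you are left with the coupled sum
\[
\sum_{\psi}\sum_{s,t \in \bbF^{*}} \overline{\eta}(s)\eta(t) \sum_{u \in \bbE^{*}} (\psi\circ\rmN)(u)\,\zeta^{\bbT_{3h}(x(su + tu^{-1}))},
\]
a twisted Kloosterman-type sum, and the claim that Theorem~\ref{DavHasse} ``folds the quartic character along the cubic extension'' and collapses it onto $G_{3h}(\chi_{2})/G_{h}(\chi_{2})$ is not an argument. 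The Davenport--Hasse product relation is an identity among Gauss sums of multiplicatively related characters; it gives no handle on an additive-character sum in which $u$ and $u^{-1}$ appear simultaneously, and twisted Kloosterman sums over a norm torus admit no such elementary evaluation in general. The idea your proposal is missing is the paper's pivotal move: rather than evaluating $S(x)$ pointwise, one proves the identity of the two sides \emph{as functions of $x$} by comparing their Fourier transforms over $\bbE^{*}$. Concretely, the paper first extends the torus sum to all of $\bbE^{*}$ (writing $q-1 = \sum_{\lambda\in\bbF^{*}}1$ and absorbing $\lambda$, using that norm-one elements are squares), substitutes $a = b^{-1}x$, and then applies $\widehat{\ \cdot\ }(\chi)$ in $x$. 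It is this transform in $x$ that decouples the two arguments: $\hat{A}(\chi)$ factors as a product $\bigl(\sum_{y} \overline{\sigma}(\rmT(y)) \sigma(y)\bigr)\bigl(\sum_{b} \overline{\tau}(\rmT(b)) \tau(b)\bigr)$ of two independent single-variable sums, each evaluated by Lemma~\ref{GSumRatio}, and only \emph{then} does Davenport--Hasse enter, in its actual role of recombining $G_{3h}(\sigma)G_{3h}(\sigma\chi_{2})$ into $G_{3h}(\sigma^{2})G_{3h}(\chi_{2})$. Without the transform in $x$ there is nothing for Davenport--Hasse to act on.

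Two smaller points. The sign at the end does not come from Corollary~\ref{Discabc} or Lemma~\ref{SquareConic} (those geometric facts are used later, in Corollary~\ref{ThBig} and Lemma~\ref{LemH}, not here); in the paper it comes from the identity $\chi_{4}(-1)\chi_{2}(2) = \chi_{4}(-4) = 1$ together with the explicit quadratic Gauss sum values of Theorem~\ref{GSum2Eval}, which make $q^{-2}G_{3h}(\chi_{2})G_{h}(\chi_{2}) = 1$ precisely when $q \equiv 1 \bmod 4$. Also, it is the hypothesis $q \equiv 1 \bmod 4$ (not $q \not\equiv 1 \bmod 3$) that forces $h$ to be even when $p \equiv 3 \bmod 4$.
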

\begin{proof}
  Define
  \begin{align*}
    A(x) & = (q-1) \chi_{2}(x) (\kappa_{0}(x) -\kappa_{2}(x)),\\  
    B(x) & = q (q - 1) \chi_{2}(\rmT(x)).
  \end{align*}
  We must show that $A(x) = B(x)$ for all $x \in \bbE^{*}$.
  
  We rewrite the expression for $A(x)$ using $\chi_{4}$:
  \begin{align*}
    A(x)
    &= (q - 1) \chi_{2}(x) (\kappa_{0}(x) + \rmi
    \kappa_{1}(x) - \kappa_{2}(x) - \rmi \kappa_{3}(x)) \\ 
    &= (q - 1) \sum_{\rmN(a) = 1} \chi_{4}(\rmT(a x))
    \overline{\chi_{4}}(\rmT(a\inv x)) \chi_{2}(x). 
  \end{align*}
  We write $q - 1$ as $\sum_{\lambda \in \bbF^{*}} 1$ and use the fact that
  elements of norm $1$ are always squares:
  \begin{align*}
    A(x)
    &= \sum_{\lambda \in \bbF^{*}} \sum_{\rmN(a) = 1}
    \chi_{4}(\rmT(a x)) \overline{\chi_{4}}(\rmT(a\inv x))
    \chi_{2}(x) \\ 
    &= \sum_{\lambda \in \bbF^{*}} \sum_{\rmN(a) = 1}
    \chi_{4}(\lambda \rmT(a x))
    \overline{\chi_{4}}(\lambda\inv \rmT(a\inv x))
    \chi_{4}(\lambda\inv) \overline{\chi_{4}}(\lambda)
    \chi_{2}(a\inv x) \\ 
    &= \sum_{\lambda \in \bbF^{*}} \sum_{\rmN(a) = 1}
    \chi_{4}(\rmT(\lambda a x))
    \overline{\chi_{4}}(\rmT(\lambda\inv a\inv x))
    \chi_{2}(\lambda\inv a\inv x). 
  \end{align*}
  If $\lambda$ runs over all elements of $\bbF^{*}$ and $a$ over all
  elements of $\bbE^{*}$ of norm $1$, then $\lambda a$ runs over all
  elements of $\bbE^{*}$: 
  \[
  A(x) = \sum_{a} \chi_{4}(\rmT(a x))
  \overline{\chi_{4}}(\rmT(a\inv x)) \chi_{2}(a\inv x).  
  \]
  We now substitute $a = b\inv x$:
  \[
  A(x) = \sum_{b} \chi_{4}(\rmT(b\inv x^{2}))
  \overline{\chi_{4}}(\rmT(b)) \chi_{2}(b). 
  \]
  Let $\chi$ be any character of $\bbE$.
  Then the Fourier transforms of $A$ and $B$ are defined to be 
  \begin{align*}
    \hat{A}(\chi) &= \sum_{x} A(x) \overline{\chi}(x) \\
    &= \sum_{x} \sum_{b} \chi_{4}(\rmT(b\inv x^{2}))
    \overline{\chi_{4}}(\rmT(b)) \chi_{2}(b)
    \overline{\chi}(x), \\ 
    \hat{B}(\chi) &= \sum_{x} B(x) \overline{\chi}(x) \\
    &= q (q-1) \sum_{x} \chi_{2}(\rmT(x))
    \overline{\chi}(x).
  \end{align*}
  Since the Fourier transform is a $1$-to-$1$ operation, we are reduced
  to showing that $\hat{A}(\chi) = \hat{B}(\chi)$ for all characters
  $\chi$ of $\bbE$. 
  
  Let $\omega = \alpha^{q^{2} + q + 1}$ be a generator of $\bbF^*$. 
  If we replace $x$ by $\omega x$ in the formula for $\hat{A}(\chi)$
  and we use $A(\omega x) = \chi_2(\omega) A(x)$, we get
  \begin{align*}
    \hat{A}(\chi) &= \sum_{x} A(\omega x) \overline{\chi}(\omega x) \\
    &= \sum_{x} \chi_{2}(\omega) \overline{\chi}(\omega)
    A(x) \overline{\chi}(x) = \chi_{2}(\omega)
    \overline{\chi}(\omega) \hat{A}(\chi). 
  \end{align*}
  If $\chi_{2}(\omega) \overline{\chi}(\omega) \neq 1$, this implies
  that $\hat{A}(\chi) = 0$.  Analogously, also $\hat{B}(\chi) = 0$ in
  this case.  We can therefore restrict ourselves to the case where 
  $\chi_{2}(\omega) \overline{\chi}(\omega) = 1$.  Since
  $\chi_{2}(\omega) = -1$, this means $\overline{\chi}(\omega) =
  \overline{\chi}(\alpha)^{q^{2} + q +1} = -1$. It follows that
  $\overline{\chi}^{q^{2} + q + 1} = \chi_{2}$.
  
  Since the restriction of $\overline{\chi}$ to $\bbF$ equals
  $\chi_{2}$, we can evaluate $\hat{B}(\chi)$ using
  Lemma~\ref{GSumRatio}:  
  \begin{equation}\label{Bhat}
    \hat{B}(\chi) = q (q-1)^{2}
    \frac{G_{3h}(\overline{\chi})}{G_{h}(\chi_{2})} 
  \end{equation}
  
  The order of $\overline{\chi}$ is a divisor of $2(q^2 + q + 1)$.
  Since the character group is cyclic of order $q^{3} - 1$, which is a
  multiple of $4(q^{2} + q + 1)$, it follows that $\overline{\chi} =
  \sigma^{2}$ for some character $\sigma$.  We can choose $\sigma$ such
  that $\sigma^{q^{2}+q+1} = \chi_{4}$.  Since $3(q^{2}+q+1) \equiv 1
  \bmod{4}$, the restriction of $\sigma$ to $\bbF$ is $\chi_{4}^{3} =
  \overline{\chi}_{4}$.  (\textbf{remark:} This makes $\chi =
  \overline{\sigma}^{2}$.)  
  
  For $\hat{A}(\chi)$, we get
  \begin{align*}
    \hat{A}(\chi) = \hat{A}(\overline{\sigma}^{2})
    &= \sum_{x} \sum_{b} \chi_4(\rmT(b\inv x^{2}))
    \overline{\chi_{4}}(\rmT(b)) \chi_2(b) \sigma(x^{2}) \\ 
    &= 2 \sum_{s \text{ a square}} \sum_{b}
    \chi_{4}(\rmT(b\inv s)) \overline{\chi_{4}}(\rmT(b))
    \chi_{2}(b) \sigma(s). 
  \end{align*}
  If we replace $s$ by $\omega s$ in the above sum, the terms remain all
  invariant.  This operation exchanges squares and non-squares.
  Therefore, $\hat{A}(\sigma^{2})$ is equal to 
  \[
  \hat{A}(\overline{\sigma}^{2})
  = \sum_{s} \sum_{b} \chi_{4}(\rmT(b\inv s))
  \overline{\chi_{4}}(\rmT(b)) \chi_{2}(b) \sigma(s). 
  \]
  After replacing $s$ by $b y$, we get
  \begin{align*}
    \hat{A}(\overline{\sigma}^{2})
    &= \sum_{y} \sum_{b} \chi_{4}(\rmT(y))
    \overline{\chi}_{4}(\rmT(b)) \chi_{2}(b) \sigma(b)
    \sigma(y) \\ 
    &= \left(\sum_{y} \chi_{4}(\rmT(y)) \sigma(y) \right)
    \left(\sum_{b} \overline{\chi}_{4}(\rmT(b))
      \chi_{2}(b) \sigma(b) \right). 
  \end{align*}

  Let $\tau = \sigma \cdot \chi_{2}$, then the restriction of $\tau$ to
  $\bbF$ is $\overline{\chi}_{4} \chi_{2} = \chi_{4}$. 
  
  \begin{align*}
    \hat{A}(\overline{\sigma}^{2})
    &= \left(\sum_{y} \overline{\sigma}(\rmT(y)) \sigma(y)\right) 
    \left(\sum_{b} \overline{\tau}(\rmT(b)) \tau(b)\right). 
  \end{align*}

  We evaluate this using Lemma~\ref{GSumRatio}:
  \begin{align*}
    \hat{A}(\overline{\sigma}^{2})
    &= (q-1)^{2} \frac{G_{3h}(\sigma) G_{3h}(\tau)}{G_h(\sigma)
      G_h(\tau)} 
    = (q-1)^{2} \frac{G_{3h}(\sigma)
      G_{3h}(\tau)}{G_{h}(\overline{\chi_{4}}) G_{h}(\chi_{4})}. 
  \end{align*}
  Applying Theorem~\ref{GSumProd} gives
  \[
  \hat{A}(\overline{\sigma}^{2})
  = q\inv (q-1)^{2} \chi_{4}(-1) G_{3h}(\sigma)
  G_{3h}(\tau). 
  \]

  Finally, we apply the Davenport--Hasse product formula
  (Theorem~\ref{DavHasse}) which states that 
  \begin{equation}
    \frac{\sigma^{2}(2) G_{3h}(\sigma) G_{3h}(\sigma
      \chi_{2})}{G_{3h}(\sigma^{2}) G_{3h}(\chi_{2})} = 1 
  \end{equation}
  Replacing $\overline{\sigma}^{2}$ back by $\chi$, we get
  \begin{align*}
    \hat{A}(\chi)
    &= q\inv (q-1)^{2} \chi_{4}(-1) \overline{\chi}(2) G_{3h}(\chi)
    G_{3h}(\chi_{2}) \\ 
    &= q\inv (q-1)^{2} \chi_{4}(-1) \chi_{2}(2) G_{3h}(\chi_{2})
    G_{3h}(\chi). 
  \end{align*}

  Since $(1 + \rmi)^{4} = -4$ and $\bbF$ contains a square root of
  $-1$, it follows that $\chi_{4}(-1) \chi_{2}(2) = \chi_{4}(-4) = 1$:
  \[
  \hat{A}(\chi) = q\inv (q-1)^2 G_{3h}(\chi_2) G_{3h}(\chi).
  \]

  Dividing this by \eqref{Bhat} gives:
  \[
  \frac{\hat{A}(\chi)}{\hat{B}(\chi)} = \frac{1}{q^{2}}
  G_{3h}(\chi_{2}) G_{h}(\chi_{2}). 
  \]
  The explicit formulas for quadratic Gauss sums given in
  Theorem~\ref{GSum2Eval} show that this always equals
  $1$ if $q \equiv 1 \bmod{4}$, hence $\hat{A}(\chi) =
  \hat{B}(\chi)$.  
\end{proof}

\section{Our setup}
\subsection{A model for $\cQ^{+}(5,q)$}
As in Section~\ref{SAlg}, we have $q = p^{h}$ with $q \equiv 1
\bmod{4}$ and $q \not\equiv 1 \bmod{3}$ (so $q \equiv 5 \mbox{ 
  or } 9 \bmod{12}$).  We again define $\bbF=\bbF_{q}$ and $\bbE =
\bbF_{q^{3}}$ with primitive elements $\omega$ and $\alpha$,
respectively (where $\omega$ is chosen to be $\alpha^{q^{2}+q+1}$),
and the relative trace and norm functions from $\bbE \to \bbF$ by 
\begin{align*}
\rmT(x) & = x^{q^{2}} + x^{q} + x,\\ 
\rmN(x) & = x^{q^{2} +q +1}.
\end{align*}

Let $\pg(5,q)$ have the underlying $\bbF$-vector space $V =
\bbE^{2}$, and consider the quadratic form $\rmQ$ on $V$ given by
\[
\rmQ((u,v)) = \rmT(uv).
\]
The polar form $\rmB$ of $\rmQ$ is then given by 
\[
\rmB((u_{1},v_{1}), (u_{2}, v_{2})) = \rmT(u_{1}v_{2}) +
\rmT(v_{1}u_{2}).
\]
\begin{lemma} \label{QDef}
  The form $\rmQ$ defined above is nondegenerate, and the associated
  quadric is a $\cQ^{+}(5,q)$.
\end{lemma}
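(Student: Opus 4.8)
that the quadratic form $\rmQ$ defined by $\rmQ((u,v)) = \rmT(uv)$ is nondegenerate as a form on the $6$-dimensional $\bbF$-space $V = \bbE^2$, and that its Witt index is $3$ (equivalently, that the associated quadric is hyperbolic rather than elliptic). Since Witt index $3$ is exactly what characterizes $\cQ^+(5,q)$ among quadrics in $\pg(5,q)$, together these give the claim.

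For nondegeneracy, I would work directly with the polar form $\rmB((u_1,v_1),(u_2,v_2)) = \rmT(u_1 v_2) + \rmT(v_1 u_2)$. Suppose $(u_1,v_1)$ is in the radical, so that $\rmT(u_1 v_2) + \rmT(v_1 u_2) = 0$ for all $(u_2,v_2) \in \bbE^2$. Taking $v_2 = 0$ and letting $u_2$ range over $\bbE$ forces $\rmT(v_1 u_2) = 0$ for all $u_2$; since the trace form $(x,y) \mapsto \rmT(xy)$ on $\bbE$ is nondegenerate (this is the content of the unnumbered lemma proved just above, which I may assume), this gives $v_1 = 0$. Symmetrically, taking $u_2 = 0$ gives $u_1 = 0$. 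Hence the radical is trivial and $\rmQ$ is nondegenerate.

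For the Witt index, the key observation is that $V$ already contains an obvious totally isotropic $3$-space. Consider the subspace $W = \{(u,0) : u \in \bbE\}$. It is a $3$-dimensional $\bbF$-subspace, and $\rmQ((u,0)) = \rmT(u \cdot 0) = \rmT(0) = 0$ for every $u$, so $W$ is totally isotropic. Since the quadratic form lives on a $6$-dimensional space, the Witt index is at most $3$, and exhibiting a totally isotropic subspace of dimension $3$ forces the Witt index to equal $3$. A nondegenerate quadric of Witt index $3$ in $\pg(5,q)$ is precisely a hyperbolic quadric $\cQ^+(5,q)$, which completes the proof.

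I expect no serious obstacle here: the construction is engineered so that the two coordinate $3$-spaces $\{(u,0)\}$ and $\{(0,v)\}$ are both totally isotropic (they will in fact be the two ''coordinate'' generators, one from each system), and nondegeneracy reduces immediately to nondegeneracy of the trace form on $\bbE$. The only point requiring a moment's care is confirming that the subspace $W$ really has $\bbF$-dimension $3$ — which is clear since $[\bbE : \bbF] = 3$ — so that it attains the maximal possible Witt index and the quadric is hyperbolic rather than elliptic.
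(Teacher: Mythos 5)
Your proof is correct and follows essentially the same route as the paper's: nondegeneracy is obtained by setting one coordinate to zero and reducing to the nondegeneracy of the trace form on $\bbE$, and hyperbolicity follows by exhibiting the totally singular $3$-space $\{(u,0) : u \in \bbE\}$, which forces maximal Witt index. The only cosmetic difference is that you explicitly cite the earlier lemma on the trace bilinear form, where the paper leaves that step implicit.
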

\begin{proof} 
  If there exists $(u, v) \in V$ with $\rmB((u, v), (x,y)) = 0$ for
  all $(x,y) \in V$, then $\rmT(uy) + \rmT(vx) = 0$ for all $(x,y) \in
  V$.  Setting $x = 0$ forces us to have $\rmT(uy) = 0$ for all $y \in
  \bbE^{*}$, thus $u = 0$.  Likewise setting $y = 0$ can be seen to
  force $v = 0$, and so $(u, v) = (0,0)$. Thus $\rmQ$ is
  nondegenerate.  We can see that $\{ (x,0) \ : \ x \in \bbE \}$ is a
  totally singular subspace of $V$ with dimension $3$, so $(V,\rmQ)$
  has maximal Witt dimension. Therefore we have that $\rmQ$ is
  hyperbolic, with the associated quadric being a $\cQ^{+}(5,q)$.
\end{proof} 

\subsection{A group}
As in Section~\ref{Smu}, we put $\mu = \alpha^{q-1}$, so $| \mu | =
q^{2} +q +1$.  Recall that $\rmN(\mu)=1$, and that the elements of
$\langle \mu \rangle$ are pairwise $\bbF$-linearly independent over
$\bbE$. 

\begin{lemma}\label{LcDef}
  The map $c : (u,v) \mapsto (\mu u,  \mu^{-1} v)$ is a projective
  isometry of $\cQ^{+}(5,q)$, and $\langle c \rangle$ acts
  semi-regularly on the points of the space. There are $q^{2} +1$
  orbits of $\langle c \rangle$ on $\cQ^{+}(5,q)$; these include the
  generators $\pi_{1}$ and $\pi_{2}$ given by
\begin{align*}
  \pi_{1} & = \{(x,0) : \ x \in \bbE^{*}\}\mbox{ and}\\ 
  \pi_{2} & = \{(0,y) : \  y \in \bbE^{*}\}.
\end{align*}
A collection of representatives for the remaining $(q^{2}-1)$
orbits is given by  
\[
\{ (1, \lambda a^{2}) : \ a \in \cS, \lambda \in \bbF^{*}\},
\]
where 
\[
\cS = \{ a : \ a \in \bbE^{*} \ | \ \rmN(a) = 1, \rmT(a^{2})=0\}. 
\]  
\end{lemma}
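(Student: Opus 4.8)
The plan is to verify the three assertions---that $c$ is an isometry, that $\langle c\rangle$ is semi-regular, and that the listed points represent the remaining orbits---in turn, the last being the substantial one. First I would check that $c$ preserves $\rmQ$ by the one-line computation $\rmQ(c(u,v)) = \rmT(\mu u\cdot\mu^{-1}v) = \rmT(uv) = \rmQ((u,v))$, so $c$ is a linear, hence projective, isometry. To find its projective order, note that $c^{k}(u,v) = (\mu^{k}u,\mu^{-k}v)$ is a scalar multiple of the identity iff $\mu^{k} = \mu^{-k}\in\bbF^{*}$; since $|\mu| = q^{2}+q+1$ is odd this forces $(q^{2}+q+1)\mid k$, so $\langle c\rangle$ has order $q^{2}+q+1$ as a group of projective transformations.

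For semi-regularity I would show that if $c^{k}$ fixes a point $[(u,v)]$ of the quadric then $(q^{2}+q+1)\mid k$. Fixing $[(u,v)]$ means $\mu^{k}u = \lambda u$ and $\mu^{-k}v = \lambda v$ for some $\lambda\in\bbF^{*}$. Whenever $u\neq 0$ the first equation gives $\mu^{k} = \lambda\in\bbF^{*}$, and whenever $v\neq 0$ the second gives $\mu^{-k} = \lambda\in\bbF^{*}$; in each of the three cases ($[(u,v)]\in\pi_{1}$, $\in\pi_{2}$, or off both) some power $\mu^{\pm k}$ lies in $\bbF^{*}$, and Lemma~\ref{muLI} then forces $k\equiv 0\pmod{q^{2}+q+1}$. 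Hence all stabilizers are trivial, the action is free, and since $|\cQ^{+}(5,q)| = (q^{2}+1)(q^{2}+q+1)$ there are exactly $q^{2}+1$ orbits. Because $c$ maps $\pi_{1}$ to $\pi_{1}$ and $|\pi_{1}| = q^{2}+q+1 = |\langle c\rangle|$, the generator $\pi_{1}$ is a single orbit, and likewise $\pi_{2}$; this accounts for two orbits and leaves $q^{2}-1$ orbits, all lying off $\pi_{1}\cup\pi_{2}$.

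The crux is to identify these $q^{2}-1$ orbits, and the key observation is a normalization. Since $\bbF^{*}$ and $\langle\mu\rangle$ intersect trivially (Lemma~\ref{muLI}) while $|\bbF^{*}|\,|\langle\mu\rangle| = (q-1)(q^{2}+q+1) = q^{3}-1 = |\bbE^{*}|$, we have $\bbE^{*} = \bbF^{*}\langle\mu\rangle$. A point off $\pi_{1}\cup\pi_{2}$ has both coordinates nonzero, so scaling by $\bbF^{*}$ and applying a suitable power of $c$ brings its first coordinate to $1$; this normalization is unique, because $\lambda\mu^{k} = 1$ with $\lambda\in\bbF^{*}$ forces $k\equiv 0$ and $\lambda = 1$. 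Consequently each off-$(\pi_{1}\cup\pi_{2})$ orbit contains exactly one point $[(1,w)]$, necessarily with $w\in\bbE^{*}$ and $\rmT(w) = 0$ from the quadric condition, and these $q^{2}-1$ values of $w$ are in bijection with the orbits.

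It then remains to show that $\{\lambda a^{2} : a\in\cS,\ \lambda\in\bbF^{*}\}$ equals the set of nonzero trace-$0$ elements. Each $\lambda a^{2}$ is nonzero with $\rmT(\lambda a^{2}) = \lambda\rmT(a^{2}) = 0$, giving the inclusion $\subseteq$. For the reverse, given $w\neq 0$ with $\rmT(w) = 0$, I would use that $q\not\equiv 1\bmod 3$ makes cubing a bijection of $\bbF^{*}$ to solve $\lambda^{3} = \rmN(w)$ uniquely; then $b := w/\lambda$ satisfies $\rmN(b) = 1$ and $\rmT(b) = 0$, and since the norm-$1$ subgroup $\langle\mu\rangle$ has odd order, squaring is a bijection on it, so $b = a^{2}$ for a unique norm-$1$ element $a$, which then lies in $\cS$. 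This yields $w = \lambda a^{2}$, and the pair $(a,\lambda)$ is unique since $\lambda$ is determined by $\rmN(w) = \lambda^{3}$ and $a$ by $a^{2} = w/\lambda$ through the same two bijections; hence $(a,\lambda)\mapsto[(1,\lambda a^{2})]$ is a bijection onto the $q^{2}-1$ orbits. The main obstacle is precisely the normalization in the third paragraph: one must resist scaling the first coordinate to $1$ by an element of $\bbE^{*}$ (only $\bbF^{*}$ is available) and instead exploit the factorization $\bbE^{*} = \bbF^{*}\langle\mu\rangle$ to see that this can be done, and uniquely, inside each orbit.
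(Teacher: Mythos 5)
Your proposal is correct and takes essentially the same approach as the paper: the isometry is checked by the same one-line computation, semi-regularity follows from Lemma~\ref{muLI} exactly as in the paper's proof, and the orbit representatives $(1,\lambda a^{2})$ are obtained by the same normalization (scaling and applying a power of $c$ to make the first coordinate $1$, taking a square root in the odd-order group $\langle \mu \rangle$, and using that cubing is a bijection of $\bbF^{*}$ since $q \not\equiv 1 \bmod 3$ for uniqueness). The only cosmetic difference is that you package uniqueness via the bijectivity of squaring on $\langle \mu \rangle$, where the paper argues $a = \pm a^{\prime}$ and rules out the minus sign by norms.
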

\begin{proof}
  It is easy to see that  $c$ is an isometry.  Now, if $c^{i}$ fixes
  some point $(u,v) \in \cQ^{+}(5,q)$, then $(\mu^{i}u, \mu^{-i}v) =
  (\lambda u, \lambda v)$ for some $\lambda \in \bbF^{*}$.  But by
  Lemma~\ref{muLI}, we cannot have $\mu^{i} \in \bbF^{*}$ unless
  $i=0$. So $\langle c \rangle$ acts semi-regularly on the points of
  $\cQ^{+}(5,q)$ and so must have $q^{2}+1$ orbits since each orbit
  has size $q^{2}+q+1$ and there are $(q^{2}+1)(q^{2}+q+1)$ points in
  total in the space. 

  Now $\pi_{1}$ and $\pi_{2}$ are each clearly point orbits of
  $\langle c \rangle$.  Any point $\bm{p} \in \cQ^{+}(5,q) \setminus 
  (\pi_{1} \cup \pi_{2})$ has the form $(u,v)$, with $u,v \in
  \bbE^{*}$ satisfying $\rmT(uv)=0$, and we can multiply by a scalar
  in $\bbF$ to assume that $\rmN(u) = 1$; this implies that $u =
  \mu^{i}$ for some $0 \leq i < q^{2}+q+1$.  We can also write $v =
  \lambda \mu^{j}$ for some $\lambda \in \bbF^{*}$ and $0 \leq j <
  q^{2}+q+1$.  Then $(u,v)$ maps under $c^{-i}$ to $(1, \lambda
  \mu^{i+j})$, with $\rmT(\mu^{i+j})=0$. Since $\langle \mu \rangle$
  is a cyclic group with odd order, there is an $a \in \langle \mu
  \rangle$ with $a^{2} = \mu^{i+j}$, therefore $(u,v)$ is in the same
  orbit under $\langle c \rangle$ as $(1, \lambda a^{2})$, with
  $\rmN(a) = 1$ and $\rmT(a^{2}) = 0$.   

  To see that, for $a, a^{\prime} \in \cS$, $(1, \lambda a^{2})$ and
  $(1, \lambda^{\prime} a^{\prime 2})$ are not in the same orbit under
  $\langle c \rangle$ unless $\lambda = \lambda^{\prime}$ and $a =
  a^{\prime}$, we recall that $\mu^{i} \in \bbF^{*}$ implies that
  $i=0$.  Therefore we would have to have $\lambda a^{2} =
  \lambda^{\prime} a^{\prime 2}$.  But since $\rmN(a) =
  \rmN(a^{\prime}) =1$, we must have $\rmN(\lambda) =
  \rmN(\lambda^{\prime})$ or equivalently, $\lambda^{3} = 
  \lambda^{\prime 3}$.  Since $q \not\equiv 1 \bmod{3}$, this
  implies that $\lambda = \lambda^{\prime}$ and $a^{2} = a^{\prime
    2}$, thus $a = \pm a^{\prime}$.  But since $\rmN(a) =
  \rmN(a^{\prime}) = 1$, we must have $a = a^{\prime}$. 
\end{proof}

\begin{lemma} \label{GrpOrbs}
  The map $z : (u,v) \mapsto (u, \omega^{4} v)$ is a  projective
  similarity of $\cQ^{+}(5,q)$ which centralizes $c$.  Define the
  group $G = \langle c,z \rangle$.  Then $G$ has $2+4(q+1)$ point
  orbits on $\cQ^{+}(5,q)$, including the planes $\pi_{1}$ and
  $\pi_{2}$.  The remaining $4(q+1)$ orbits each have size
  $\frac{(q-1)}{4}(q^{2} +q +1)$, and are represented by the points
  \[
  \{(1,\omega^{s} a^{2}) : \ 0 \leq s < 4, \  a  \in \cS \},
  \]
  where $\cS$ is as defined in Lemma~\ref{LcDef}.
\end{lemma}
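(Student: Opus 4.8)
The plan is to verify the two algebraic claims about $z$ by direct computation, and then to obtain the orbit structure of $G=\langle c,z\rangle$ by analyzing how $z$ permutes the $\langle c\rangle$-orbits already described in Lemma~\ref{LcDef}.

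First I would check that $z$ is an $\bbF$-linear bijection of $V=\bbE^{2}$ (multiplication by $\omega^{4}\in\bbF$ on the second coordinate is $\bbF$-linear), hence induces a projectivity of $\pg(5,q)$. Evaluating the form gives $\rmQ(z(u,v))=\rmT(u\,\omega^{4}v)=\omega^{4}\rmT(uv)=\omega^{4}\rmQ((u,v))$, so $z$ rescales $\rmQ$ by the fixed nonzero constant $\omega^{4}$; this shows $z$ is a similarity and in particular preserves $\cQ^{+}(5,q)$. That $z$ centralizes $c$ is immediate, since both $cz$ and $zc$ send $(u,v)$ to $(\mu u,\omega^{4}\mu^{-1}v)$. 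Consequently $G$ is abelian and $\langle c\rangle$ is normal in $G$; as $c$ has order $q^{2}+q+1$ while (projectively) $z$ has order $(q-1)/4$, and $\gcd(q^{2}+q+1,q-1)=\gcd(3,q-1)=1$ using $q\not\equiv 1\bmod 3$, I can record that $G=\langle c\rangle\times\langle z\rangle$ is cyclic of order $\frac{q-1}{4}(q^{2}+q+1)$.

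The core of the argument is the orbit count. Because $G$ is generated by the commuting maps $c$ and $z$, every $G$-orbit is a union of $\langle c\rangle$-orbits, and the number it contains equals the size of the orbit of the corresponding $\langle c\rangle$-orbit under the permutation of $\langle c\rangle$-orbits induced by $z$. The planes $\pi_{1}$ and $\pi_{2}$ are each fixed setwise by $c$ and $z$ and are single $\langle c\rangle$-orbits by Lemma~\ref{LcDef}, so they remain two $G$-orbits. For the complement I would use the representatives $(1,\lambda a^{2})$, $\lambda\in\bbF^{*}$, $a\in\cS$, from Lemma~\ref{LcDef}, which index the $\langle c\rangle$-orbits bijectively by pairs $(\lambda,a)\in\bbF^{*}\times\cS$; note $|\cS|=q+1$ follows from the conic count in Lemma~\ref{SquareConic}, giving $(q-1)(q+1)=q^{2}-1$ such orbits. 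Applying $z$ gives $z(1,\lambda a^{2})=(1,\omega^{4}\lambda a^{2})$, and comparing norms identifies the label of this point as $(\omega^{4}\lambda,a)$: writing it as $(1,\lambda' a'^{2})$, the relation $\rmN(\lambda' a'^{2})=\lambda'^{3}$ forces $\lambda'=\omega^{4}\lambda$ (cubing is injective on $\bbF^{*}$ since $3\nmid q-1$), and then $a'^{2}=a^{2}$ with $\rmN(a')=1$ forces $a'=a$. Thus $z$ fixes the $a$-coordinate and multiplies the $\lambda$-coordinate by $\omega^{4}$.

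Finally I would count the $\langle z\rangle$-orbits on the label set. For each fixed $a$, multiplication by $\omega^{4}$ partitions $\bbF^{*}=\langle\omega\rangle$ into the four cosets of $\langle\omega^{4}\rangle$ (here $[\bbF^{*}:\langle\omega^{4}\rangle]=4$ because $q\equiv 1\bmod 4$), each of size $\frac{q-1}{4}$ and forming one $\langle z\rangle$-orbit. This yields $4\,|\cS|=4(q+1)$ label-orbits, hence $4(q+1)$ many $G$-orbits on the complement, each consisting of $\frac{q-1}{4}$ of the $\langle c\rangle$-orbits and so of size $\frac{q-1}{4}(q^{2}+q+1)$; the four cosets are represented by $\omega^{0},\ldots,\omega^{3}$, giving the representatives $(1,\omega^{s}a^{2})$ with $0\le s<4$, $a\in\cS$. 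Together with $\pi_{1},\pi_{2}$ this is the claimed $2+4(q+1)$ orbits, and as a consistency check $2(q^{2}+q+1)+4(q+1)\cdot\frac{q-1}{4}(q^{2}+q+1)=(q^{2}+1)(q^{2}+q+1)$ recovers the total point count. The only real obstacle is the bookkeeping in identifying the label of $z(1,\lambda a^{2})$, i.e. making precise that $z$ acts on $(\lambda,a)$ by $(\lambda,a)\mapsto(\omega^{4}\lambda,a)$; everything else is a routine coset count once this is established.
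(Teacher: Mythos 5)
Your proposal is correct and follows essentially the same route as the paper: verify that $z$ is a similarity commuting with $c$, observe that $z$ therefore permutes the $\langle c\rangle$-orbits of Lemma~\ref{LcDef}, and use the norm argument (injectivity of cubing on $\bbF^{*}$, since $q\not\equiv 1\bmod 3$) to show $z$ sends the orbit labelled $(\lambda,a)$ to the one labelled $(\omega^{4}\lambda,a)$, so that the orbit count reduces to counting the four cosets of $\langle\omega^{4}\rangle$ in $\bbF^{*}$. Your additional details (the explicit rescaling computation $\rmQ(z(u,v))=\omega^{4}\rmQ((u,v))$, the structure $G=\langle c\rangle\times\langle z\rangle$, and the final point-count consistency check) are sound but do not change the substance of the argument.
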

\begin{proof}
  It is clear that $z$ is a projective similarity of order $(q-1)/4$,
  and that $z$ commutes with $c$, therefore $z$ permutes the orbits of
  $\langle c \rangle$.  Furthermore, $z$ stabilizes the planes
  $\pi_{1}$ and $\pi_{2}$ pointwise.  

  Now consider two orbits $\cO_{1}$, $\cO_{2}$ of $\langle c \rangle$
  on $\cQ^{+}(5,q) \setminus (\pi_{1} \cup \pi_{2})$ represented by
  points $(1, \lambda a^{2})$ and $(1, \lambda^{\prime} a^{\prime
    2})$, respectively, with $a, a^{\prime} \in \cS$ and $\lambda,
  \lambda^{\prime} \in \bbF^{*}$.  If $\cO_{1} \mapsto \cO_{2}$ under
  $z^{k}$ for some $0 \leq k < (q-1)/4$, then $(1, \omega^{4k}\lambda
  a^{2}) \in \cO_{2}$.  This can only happen if $\omega^{4k}\lambda
  a^{2} = \lambda^{\prime} a^{\prime 2}$.  Since $\rmN(a^{2}) =
  \rmN(a^{\prime 2}) = 1$, we must have $\rmN(\omega^{4k}\lambda) =
  \rmN(\lambda^{\prime})$; but $x \mapsto \rmN(x)$ is a permutation of
  $\bbF$ since $q \not\equiv 1 \bmod{3}$, so $\lambda^{\prime} =
  \omega^{4k}\lambda$ for some $0 \leq k < (q-1)/4$ and $a^{\prime} =
  a$.  Therefore the orbit under $G$ containing the point $(1, \lambda
  a^{2})$, with $a \in \cS$, can be described by writing $\lambda =
  \omega^{4k+s}$; then the point orbit in question has unique
  representative $(1, \omega^{s} a^{2})$.  The number of orbits
  follows immediately. 
\end{proof}

\subsection{A tactical decomposition}
As we are interested in finding eigenvectors of the collineation
matrix $\bm{A}$ of $\cQ^{+}(5,q)$, we will define a useful tactical
decomposition of this matrix.  If we think of $\bm{A}$ as the
incidence matrix of the structure whose ``points'' and ``blocks'' are
both the set of points of $\cQ^{+}(5,q)$ with incidence being given by
collinearity in the quadric, then it is clear that any collineation
group of $\cQ^{+}(5,q)$ will induce a tactical decomposition of
$\bm{A}$.  

Let 
\begin{equation}\label{eqa}
  \{ a_{1}, \ \ldots, a_{q+1} \} = \{ a : \ a \in \bbE^{*} \ | \rmN(a)
  = 1, \ \rmT(a^{2}) = 0\}.
\end{equation}
\begin{remark}
It can be seen from Lemma~\ref{LcDef} that the size
of the set on the right hand side of \ref{eqa} is in fact $q+1$.
\end{remark}
We wish to consider the tactical decomposition of $\bm{A}$ induced by
the group $G$ defined in Lemma~\ref{GrpOrbs}.  As seen in
Lemma~\ref{GrpOrbs}, the orbits of $G$ on $\cQ^{+}(5,q) \setminus
(\pi_{1} \cup \pi_{2})$ are represented uniquely by the points $(1, \
w^{s} a^{2}_{i})$ for $0 \leq s < 4$, $1 \leq i \leq q+1$.  Order
these points 
\begin{equation}\label{eqOrbitOrder}
  \begin{array}{lll}
    (1,  a^{2}_{1}), & \ldots, &  (1,  a^{2}_{q+1}),\\
    (1, \omega a^{2}_{1}), & \ldots, & (1, \omega a^{2}_{q+1}), \\
    (1, \omega^{2} a^{2}_{1}), & \ldots, & (1, \omega^{2} a^{2}_{q+1}),\\
    (1, \omega^{3} a^{2}_{1}), & \ldots, & (1, \omega^{3} a^{2}_{q+1}).
  \end{array}
\end{equation}

Let $\bm{B}$ be the column sum matrix associated with the tactical
decomposition of $\cQ^{+}(5,q)$ induced by the group $G$, after
throwing away the rows and columns corresponding to points in
$(\pi_{1} \cup \pi_{2})$ (as in Theorem~\ref{EvecMod}).

\begin{definition}
For $a, b \in \bbE^{*}$ with $\rmT(a) = \rmT(b) = 0$, and $0 \leq s \leq 3$,
define  
\begin{align*}
\kappa_{s}(a,b)  & := \left| \{ (k,i) : 0 \leq k < (q-1)/4, \ 0 \leq i
  < q^{2} +q +1 \ | \ \rmT(\mu^{i}a) = -\omega^{4k+s}\rmT(\mu^{-i}b)
  \}  \right| 
\end{align*}
\end{definition}

\begin{lemma} \label{TDMat}
The matrix $\bm{B}$ can be described as follows: for $0 \le s \le 3$, let
$\bm{B}_{s} = \left[ \kappa_{s}(a_{i}^{2}, a_{j}^{2})  \right]_{1 \le
  i,j \le q+1}$, where the $a_{i}$ are as described in
Equation~\ref{eqa}. Then   
\begin{equation}\label{eqB}
  \bm{B} = \begin{bmatrix}
    (\bm{B}_{0}-\bm{I}) & \bm{B}_{1} & \bm{B}_{2} & \bm{B}_{3} \\ 
    \bm{B}_{3} & (\bm{B}_{0}-\bm{I}) & \bm{B}_{1} & \bm{B}_{2} \\
    \bm{B}_{2} & \bm{B}_{3} & (\bm{B}_{0}-\bm{I}) & \bm{B}_{1} \\ 
    \bm{B}_{1} & \bm{B}_{2} & \bm{B}_{3} & (\bm{B}_{0}-\bm{I})
  \end{bmatrix} = \begin{bmatrix}
    \bm{B}_{0}^{\prime} & \bm{B}_{1} & \bm{B}_{2} & \bm{B}_{3} \\ 
    \bm{B}_{3} & \bm{B}_{0}^{\prime} & \bm{B}_{1} & \bm{B}_{2} \\
    \bm{B}_{2} & \bm{B}_{3} & \bm{B}_{0}^{\prime} & \bm{B}_{1} \\ 
    \bm{B}_{1} & \bm{B}_{2} & \bm{B}_{3} & \bm{B}_{0}^{\prime}
  \end{bmatrix}.
\end{equation}
\end{lemma}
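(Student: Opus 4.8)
The plan is to compute directly the column-sum $c_{(r,j),(r',i)}$ of the tactical decomposition $\bm{B}$, where the block class is the orbit of $(1,\omega^{r}a_i^2)$ and the point class is the orbit of $(1,\omega^{r'}a_j^2)$, and then to match the result against the claimed circulant-of-blocks structure. By definition of the column-sum matrix, this entry counts, for a fixed point $\bm{p}=(1,\omega^{r'}a_j^2)$ in the point class, the number of points in the block class $\{(\mu^{i},\omega^{r}\mu^{-i}a_i^2):0\le i<q^2+q+1,\ \text{plus the }z\text{-shifts}\}$ that are collinear with $\bm{p}$. Collinearity in $\cQ^{+}(5,q)$ is governed by the polar form, so two points $(u_1,v_1)$ and $(u_2,v_2)$ are collinear exactly when $\rmB((u_1,v_1),(u_2,v_2))=\rmT(u_1v_2)+\rmT(v_1u_2)=0$.

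First I would write out this collinearity condition for a generic point $c^{k'}z^{k}(1,\omega^{s}a_i^2)=(\mu^{k'},\omega^{4k+s}\mu^{-k'}a_i^2)$ of the block orbit against the fixed representative $(1,\omega^{r'}a_j^2)$ of the point class; since the whole configuration is $G$-invariant I may take $r'=0$ for the point and let $s$ range. The polar form then reads
\begin{equation*}
\rmT\!\left(\mu^{k'}\,\omega^{r'}a_j^{2}\right)+\rmT\!\left(\omega^{4k+s}\mu^{-k'}a_i^{2}\right)=0,
\end{equation*}
and after absorbing the scalar $\omega^{r'}\in\bbF^{*}$ (which passes through $\rmT$ and does not affect whether a trace vanishes) this is exactly the defining relation
\begin{equation*}
\rmT(\mu^{k'}a_j^{2})=-\omega^{4k+s}\rmT(\mu^{-k'}a_i^{2})
\end{equation*}
counted by $\kappa_{s}(a_i^{2},a_j^{2})$, with the index $s$ determined by the difference of the two $z$-exponents. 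So the block occupying the $(r,r')$ position of $\bm{B}$ is $\bm{B}_{s}$ with $s\equiv r-r'\pmod 4$; reading off $s=0,1,2,3$ along each row reproduces the circulant layout displayed in \eqref{eqB}.

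The one genuine subtlety — and the main obstacle — is the diagonal correction $\bm{B}_0-\bm{I}$. The count $\kappa_0(a_i^2,a_j^2)$ tabulated by the definition includes the pair $(k,i')$ corresponding to the block point that is the point $\bm{p}$ itself (the case where the group element is the identity and $i=j$), but in the collinearity matrix $\bm{A}'$ the diagonal is zero: a point is not counted as collinear with itself in the off-diagonal bookkeeping of a tactical decomposition built from $\bm{A}$. Thus on the $s=0$, $i=j$ position exactly one spurious incidence is included, which is removed by subtracting $\bm{I}$; for $i\neq j$ the points are distinct and no correction occurs, and for $s\neq 0$ the representative $(1,\omega^{s}a_i^2)$ with $s\neq 0$ is never equal to $(1,a_j^2)$ so again no correction is needed. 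I would verify carefully that no other coincidence forces $\mu^{k'}=1$ with a matching scalar — this is precisely where Lemma~\ref{muLI} (pairwise $\bbF$-linear independence of the powers of $\mu$) is invoked, guaranteeing that the only self-incidence arises from the identity element. Setting $\bm{B}_0'=\bm{B}_0-\bm{I}$ then gives the second displayed form, completing the identification.
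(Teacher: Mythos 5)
Your proposal is correct and is essentially the paper's own (one-sentence) proof written out in full: the paper merely observes that the entries of $\bm{B}$ are governed by the relationship between the polar form $\rmB$ and the counts $\kappa_{s}$, which is exactly your computation, including the $-\bm{I}$ on the diagonal blocks forced by the zero diagonal of the collinearity matrix together with the semiregularity of $G$ (Lemma~\ref{muLI}). One bookkeeping caveat: by the definition of $\kappa_{s}(a,b)$, your relation $\rmT(\mu^{k'}a_{j}^{2})=-\omega^{4k+s}\rmT(\mu^{-k'}a_{i}^{2})$ is the count $\kappa_{s}(a_{j}^{2},a_{i}^{2})$ rather than $\kappa_{s}(a_{i}^{2},a_{j}^{2})$, so your convention $s\equiv r-r'\pmod 4$ actually produces the transpose of the matrix displayed in \eqref{eqB}; this is harmless, since replacing $i$ by $-i$ in the definition shows $\kappa_{s}(a,b)=\kappa_{(4-s)\bmod 4}(b,a)$, hence $\bm{B}_{0}$ and $\bm{B}_{2}$ are symmetric and $\bm{B}_{1}^{T}=\bm{B}_{3}$, so the two block layouts describe the same matrix.
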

\begin{proof}
This can be seen by observing the relationship between the functions
$\kappa_{z}$ and the polar form $\rmB$ of the quadratic form $\rmQ$
associated with $\cQ^{+}(5,q)$.
\end{proof}

\subsection{Eigenvectors of $\bm{B}$}
We have that $\bm{B}$ is real symmetric and block-circulant.
Eigenvectors of block-circulant matrices have been studied in
\cite{GTee}, and can be found using the following result:
\begin{theorem}\label{BCirc}
Let $\zeta$ be a fourth root of unity in $\bbC$, and let 
\[
\bm{A} = \begin{bmatrix}
\bm{A}_{0} & \bm{A}_{1} & \bm{A}_{2} & \bm{A}_{3}\\
\bm{A}_{3} & \bm{A}_{0} & \bm{A}_{1} & \bm{A}_{2}\\
\bm{A}_{2} & \bm{A}_{3} & \bm{A}_{0} & \bm{A}_{1}\\
\bm{A}_{1} & \bm{A}_{2} & \bm{A}_{3} & \bm{A}_{0}
\end{bmatrix}.
\]
Then if $\bm{v}$ is an eigenvector of $\bm{H} = \sum_{s = 0}^{3}
\zeta^{s} \bm{A}_{s}$, then
\[
\bm{w} = 
\begin{bmatrix} 
  \zeta^{0} \bm{v} \\ 
  \zeta^{1} \bm{v} \\
  \zeta^{2} \bm{v} \\
  \zeta^{3} \bm{v}
\end{bmatrix}
\]
is an eigenvector of $\bm{A}$.
\end{theorem}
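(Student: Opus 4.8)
The plan is to verify directly that $\bm{A}\bm{w} = \lambda\bm{w}$, where $\lambda$ denotes the eigenvalue of $\bm{H}$ associated to $\bm{v}$; this computation will simultaneously identify $\lambda$ as the eigenvalue attached to the expanded vector $\bm{w}$. First I would record the block structure of $\bm{A}$ in a single indexed formula. Labelling the block rows and columns by $r,t \in \{0,1,2,3\}$, the block-circulant pattern means that the $(r,t)$ block of $\bm{A}$ is exactly $\bm{A}_{(t-r)\bmod 4}$; one checks this against the four displayed block rows (for instance the $(1,0)$ block is $\bm{A}_{3}$, and indeed $0-1 \equiv 3 \bmod 4$).

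With this formula in hand, I would compute the $r$-th block of $\bm{A}\bm{w}$. Since the $t$-th block of $\bm{w}$ is $\zeta^{t}\bm{v}$, we have
\[
(\bm{A}\bm{w})_{r} = \sum_{t=0}^{3} \bm{A}_{(t-r)\bmod 4}\,\zeta^{t}\bm{v}.
\]
Substituting $s = (t-r)\bmod 4$, which is a bijection of $\{0,1,2,3\}$ onto itself as $t$ varies, and using that $\zeta$ is a fourth root of unity so that $\zeta^{(s+r)\bmod 4} = \zeta^{s}\zeta^{r}$, this becomes
\[
(\bm{A}\bm{w})_{r} = \zeta^{r}\sum_{s=0}^{3}\zeta^{s}\bm{A}_{s}\bm{v} = \zeta^{r}\,\bm{H}\bm{v}.
\]

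It then remains only to invoke the eigenvector hypothesis: if $\bm{H}\bm{v} = \lambda\bm{v}$, then $(\bm{A}\bm{w})_{r} = \lambda\,\zeta^{r}\bm{v} = \lambda\,(\bm{w})_{r}$ for each $r \in \{0,1,2,3\}$, whence $\bm{A}\bm{w} = \lambda\bm{w}$ and $\bm{w}$ is an eigenvector of $\bm{A}$ with the same eigenvalue $\lambda$.

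The argument is a direct computation, so I do not anticipate a genuine obstacle; the only points requiring care are fixing the correct sign convention in the index $(t-r)\bmod 4$ of the block-circulant structure, and applying $\zeta^{4}=1$ at precisely the step where the shifted exponent $(s+r)\bmod 4$ is replaced by $s+r$ to split off the factor $\zeta^{r}$. If one preferred a more structural treatment, the same conclusion follows from the standard block-diagonalization of a block-circulant matrix under the block discrete Fourier transform, but the elementary verification sketched above is shortest and entirely self-contained.
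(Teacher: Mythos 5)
Your proof is correct: the identification of the $(r,t)$ block as $\bm{A}_{(t-r)\bmod 4}$, the reindexing $s=(t-r)\bmod 4$, and the use of $\zeta^{4}=1$ to split off $\zeta^{r}$ give exactly $(\bm{A}\bm{w})_{r}=\zeta^{r}\bm{H}\bm{v}=\lambda(\bm{w})_{r}$, and $\bm{w}\neq 0$ since its first block is $\bm{v}$. The paper states this result without proof, citing the literature on block-circulant matrices, and the ``expand the eigenvector'' verification you carry out is precisely the standard argument being invoked, so your approach matches the intended one.
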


Putting $\zeta = \rmi$, we will find eigenvectors of $\bm{B}$ by
considering the matrix
\begin{equation}\label{eqHdef}
  \bm{H} = \bm{B}_{0}^{\prime} + \sum_{s=1}^{3} \rmi^{s} \bm{B}_{s} =
  \left( \sum_{s=0}^{3} \rmi^{s} \bm{B}_{s} \right) - \bm{I}. 
\end{equation}
The entries $\kappa_{s}(a_{i}^{2}, a_{j}^{2})$ of $\bm{B}_{s}$ for $0
\leq s < 3$ can be analyzed by the following relationship to the
functions $\kappa_{z}(x)$ for $z \in \{1, \rmi, -1, -\rmi \}$
given in Definition~\ref{Dkappa}.

\begin{theorem}
  Let $a, b \in \bbE^{*}$ with $\rmN(a) = \rmN(b) = 1$ and $\rmT(a^{2})
  = \rmT(b^{2}) = 0$. Then
  \begin{align*}
    \kappa_{s}(a^{2},b^{2}) &= \kappa_{\rmi^{s}\chi_{2}(2)}(ab)  + \left( \frac{q-1}{4}
    \right) \epsilon_{ab} \\
  \end{align*}
  where 
  \[
  \epsilon_{ab} = \left| \{ j : \ \rmT(\mu^{j} ab) = \rmT(\mu^{-j} ab)
    = 0 \} \right|. 
  \]
\end{theorem}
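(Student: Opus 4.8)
The plan is to evaluate the defining count of $\kappa_{s}(a^{2},b^{2})$ by first summing over $k$ for each fixed $i$, and then to transport the resulting count over $i$ to a count involving $ab$ via a translation in the cyclic group $\langle \mu \rangle$. Recall that $\kappa_{s}(a^{2},b^{2})$ counts the pairs $(k,i)$ with $0 \le k < (q-1)/4$ and $0 \le i < q^{2}+q+1$ satisfying $\rmT(\mu^{i}a^{2}) = -\omega^{4k+s}\rmT(\mu^{-i}b^{2})$; this requires $\rmT(a^{2})=\rmT(b^{2})=0$ only so that the left-hand count is defined. For fixed $i$ I would distinguish three cases by the values of the two traces. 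With the convention $\chi_{4}(\omega) = \rmi$ fixed in Section~\ref{Skappa}, the elements $\omega^{4k+s}$ with $0 \le k < (q-1)/4$ run exactly once through the coset $\{ y \in \bbF^{*} : \chi_{4}(y) = \rmi^{s}\}$. Hence if both $\rmT(\mu^{i}a^{2})$ and $\rmT(\mu^{-i}b^{2})$ are nonzero, there is exactly one admissible $k$ precisely when $\chi_{4}\!\left(-\rmT(\mu^{i}a^{2})/\rmT(\mu^{-i}b^{2})\right) = \rmi^{s}$, and none otherwise; if $\rmT(\mu^{-i}b^{2}) = \rmT(\mu^{i}a^{2}) = 0$ then every $k$ works, giving $(q-1)/4$ pairs; and in the remaining mixed case there are no admissible $k$. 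Summing over $i$ then yields $\kappa_{s}(a^{2},b^{2}) = N_{s} + \tfrac{q-1}{4}M$, where $N_{s}$ counts the $i$ with both traces nonzero and $\chi_{4}(-\rmT(\mu^{i}a^{2})/\rmT(\mu^{-i}b^{2})) = \rmi^{s}$, and $M = |\{ i : \rmT(\mu^{i}a^{2}) = \rmT(\mu^{-i}b^{2}) = 0 \}|$.

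The key step is a change of index. Since $\rmN(a) = \rmN(b) = 1$, both $a$ and $b$ lie in $\langle \mu \rangle$ (the norm-one subgroup of order $q^{2}+q+1$), say $a = \mu^{m}$ and $b = \mu^{n}$. For $i$ ranging over $\bbZ/(q^{2}+q+1)$ I would substitute $j = i + m - n$, a bijection of $\bbZ/(q^{2}+q+1)$. A direct exponent computation gives $\mu^{i}a^{2} = \mu^{i+2m} = \mu^{j}ab$ and $\mu^{-i}b^{2} = \mu^{-i+2n} = \mu^{-j}ab$, so that $\rmT(\mu^{i}a^{2}) = \rmT(\mu^{j}ab)$ and $\rmT(\mu^{-i}b^{2}) = \rmT(\mu^{-j}ab)$ for all corresponding $i,j$. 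Applying this substitution to $M$ identifies it immediately with $\epsilon_{ab} = |\{ j : \rmT(\mu^{j}ab) = \rmT(\mu^{-j}ab) = 0 \}|$, giving the second term of the claim.

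It remains to identify $N_{s}$ with $\kappa_{\rmi^{s}\chi_{2}(2)}(ab)$. Under the same substitution, $N_{s}$ becomes the number of $j$ with $\rmT(\mu^{j}ab)$ and $\rmT(\mu^{-j}ab)$ nonzero and $\chi_{4}(-\rmT(\mu^{j}ab)/\rmT(\mu^{-j}ab)) = \rmi^{s}$. Using $\chi_{4}(y^{-1}) = \overline{\chi_{4}}(y)$ I would rewrite the left-hand side as $\chi_{4}(-1)\,\chi_{4}(\rmT(\mu^{j}ab))\,\overline{\chi_{4}}(\rmT(\mu^{-j}ab))$, so that the condition reads $\chi_{4}(\rmT(\mu^{j}ab))\overline{\chi_{4}}(\rmT(\mu^{-j}ab)) = \rmi^{s}\chi_{4}(-1)^{-1}$. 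As in the proof of Theorem~\ref{kappa1minus}, the hypothesis $q \equiv 1 \bmod 4$ forces $\bbF$ to contain a square root of $-1$, whence $-4 = (1+\sqrt{-1})^{4}$ is a fourth power in $\bbF$ and $\chi_{4}(-4) = 1$; combined with $\chi_{4}(4) = \chi_{2}(2)$ this gives $\chi_{4}(-1) = \chi_{2}(2) = \pm 1$. The condition thus becomes $\chi_{4}(\rmT(\mu^{j}ab))\overline{\chi_{4}}(\rmT(\mu^{-j}ab)) = \rmi^{s}\chi_{2}(2)$, and since $\rmi^{s}\chi_{2}(2) \neq 0$ the nonvanishing of the two traces is automatic. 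Hence $N_{s} = \kappa_{\rmi^{s}\chi_{2}(2)}(ab)$, and combining with the previous paragraph completes the proof.

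I expect the main obstacle to be the bookkeeping in the first paragraph: correctly reducing the two-variable $(k,i)$ count to a single count over $i$ through the coset structure of $\bbF^{*}$ under $\chi_{4}$, while isolating the degenerate case where both traces vanish. The translation $j = i + m - n$ is the crucial idea, but once found it is purely formal. The other delicate point is the sign bookkeeping that produces the factor $\chi_{2}(2)$ rather than some other fourth root of unity; it is exactly there that the congruence $q \equiv 1 \bmod 4$ (via $\chi_{4}(-1) = \chi_{2}(2)$ and $\chi_{4}(\omega) = \rmi$) is needed.
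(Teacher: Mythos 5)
Your proof is correct and follows essentially the same route as the paper's: the index shift inside $\langle \mu \rangle$ turning $(\mu^{i}a^{2},\,\mu^{-i}b^{2})$ into $(\mu^{j}ab,\,\mu^{-j}ab)$, the split into the degenerate case (both traces zero, contributing $\tfrac{q-1}{4}\epsilon_{ab}$) and the nondegenerate case resolved by applying $\chi_{4}$, and the identification of the resulting fourth root of unity as $\rmi^{s}\chi_{2}(2)$. The only cosmetic difference is your justification of $\chi_{4}(-1)=\chi_{2}(2)$ via $(1+\sqrt{-1})^{4}=-4$ — the same trick the paper itself uses in the proof of Theorem~\ref{kappa1minus} — whereas the paper's proof of this particular theorem instead argues through $q \equiv p \bmod 8$ and the explicit evaluation of $\chi_{2}(2)$.
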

\begin{proof}
  Take $a$ and $b$ as above.  Then $\kappa_{s}(a^{2}, b^{2})$ counts the
  number of pairs $(k,i)$, $0 \leq k < (q-1)/4$, $0 \le i < q^{2} +q+1$,
  such that 
  \[
  \rmT(\mu^{i}a^{2}) +\omega^{4k+s} \rmT(\mu^{-i}b^{2}) = 0.
  \]
  Since $\rmN(ab^{-1}) = 1$, there exists $n$ with $0 \leq n < q^{2} +q
  +1$ such that $\mu^{n} = ab^{-1}$. Then $\mu^{-(n-i)}a^{2} =
  \mu^{i}ab$ and $\mu^{(n-i)}b^{2} =  \mu^{-i}ab$, so we can replace
  $i$ with $(i-n)$ so that we are instead counting pairs $(k,i)$
  satisfying 
  \begin{equation}\label{Tkeq}
  \rmT(\mu^{i} ab) = - \omega^{4k+s} \rmT(\mu^{-i} ab).
  \end{equation}

  Now, consider a pair $(k,i)$ satisfying this equation.  If
  $\rmT(\mu^{-i}ab) =0$, then we must also have $\rmT(\mu^{i}ab) = 0$, 
  and the choice of $k$ is immaterial.  Thus there are
  $\left(\frac{q-1}{4}\right) \epsilon_{ab} $ such pairs.  If
  $\rmT(\mu^{-i}ab) \neq 0$ then, applying $\chi_{4}$ to both sides of 
  Equation~\ref{Tkeq}, we have
  \[
  \chi_{4}(\rmT(\mu^{i} ab)) = \chi_{4}(-
  \omega^{4k+s})\chi_{4}(\rmT(\mu^{-i} ab)),
  \]
  and since $\chi_{4}(\rmT(\mu^{-i} ab)) \neq 0$,
  \[
  \chi_{4}(\rmT(\mu^{i} ab)) \overline{\chi}_{4}(\rmT(\mu^{-i} ab)) = \chi_{4}(-
  \omega^{4k+s}) = \rmi^{s} \chi_{4}(-1).
  \]
  Now since $q \equiv 1 \bmod{4}$, $\chi_{4}(-1) = \pm 1$, and 
  $\chi_{4}(-1) = 1$ if and only if $q \equiv 1 \bmod{8}$.

  Notice that, with $q = p^{h}$ for some prime $p$, if $h$ is even then 
  $q \equiv 1 \bmod{8}$ and every element of $\bbF_{p}$ is a square in 
  $\bbF_{q}$ so $\chi_{2}(2) = 1$.  On the other hand if $h$ is odd,
  $\bbF_{q}$ is an odd-degree field extension of $\bbF_{p}$, so $2$ is
  a square in $\bbF_{q}$ if and only if it is a square in $\bbF_{p}$.
  Since $h-1$ is even, $q = p \cdot p^{h-1} \equiv p \bmod{8}$, thus
  $\chi_{2}(2) = (-1)^{\frac{p^{2}-1}{8}} = 1$ if and only if $q
  \equiv 1 \bmod{8}$.  In either case, $\chi_{4}(-1) = \chi_{2}(2)$.
\end{proof}

\begin{corollary}\label{ThBig}
  Let $a, b \in \bbE^{*}$ with $\rmN(a) = \rmN(b) = 1$ and $\rmT(a^{2})
  = \rmT(b^{2}) = 0$. Then we have the following:
\begin{enumerate}[label=\rm{(\roman*)}, ref=\rm{(\roman*)}]
  \item \label{13} $\kappa_{1}(a^{2}, b^{2}) = \kappa_{3}(a^{2}, b^{2})$; 
  \item \label{02} $\kappa_{0}(a^{2}, b^{2}) - \kappa_{2}(a^{2},
    b^{2})= q \cdot \chi_{2}(2\rmT(ab));$ 
  \item \label{trans} for each $c \in \bbE^{*} \setminus \{a,b\}$ with
    $\rmN(c) = 1$, $\rmT(c^{2}) = 0$, we have
    \[
    \kappa_{0}(a^{2},c^{2})-\kappa_{2}(a^{2},c^{2}) = \kappa_{0}(b^{2},
    c^{2}) - \kappa_{2}(b^{2}, c^{2}) \iff \chi_{2}(2\rmT(ab)) \neq -1.
    \]
  \end{enumerate}
\end{corollary}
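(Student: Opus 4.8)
The unifying observation is that the preceding theorem writes each two-argument quantity $\kappa_s(a^2,b^2)$ as a one-argument value $\kappa_{\rmi^s\chi_2(2)}(ab)$ plus the correction $\frac{q-1}{4}\epsilon_{ab}$, and this correction depends only on the pair $(a,b)$, not on $s$. Hence in any difference $\kappa_s(a^2,b^2)-\kappa_{s'}(a^2,b^2)$ the correction cancels, and everything reduces to differences of the one-argument functions $\kappa_z$ studied in Section~\ref{Skappa}. The plan is to handle the three parts in order, feeding each reduced expression into an earlier result.

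For part~\ref{13} I would write $\kappa_1(a^2,b^2)-\kappa_3(a^2,b^2)=\kappa_{\rmi\chi_2(2)}(ab)-\kappa_{-\rmi\chi_2(2)}(ab)$. Since $\chi_2(2)=\pm1$, the two subscripts form the set $\{\rmi,-\rmi\}$ in one order or the other, so the difference is $\pm\bigl(\kappa_{\rmi}(ab)-\kappa_{-\rmi}(ab)\bigr)$, which vanishes by Lemma~\ref{kappazzbar}. For part~\ref{02} the same cancellation gives $\kappa_0(a^2,b^2)-\kappa_2(a^2,b^2)=\kappa_{\chi_2(2)}(ab)-\kappa_{-\chi_2(2)}(ab)=\chi_2(2)\bigl(\kappa_1(ab)-\kappa_{-1}(ab)\bigr)$, the last equality holding whether $\chi_2(2)$ is $+1$ or $-1$. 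Theorem~\ref{kappa1minus} evaluates the bracket as $q\,\chi_2(ab)\chi_2(\rmT(ab))$. Here I would use that $a$ and $b$ have norm~$1$ and are therefore squares (the norm-one elements form a subgroup of the odd order $q^2+q+1$, hence lie among the squares), so $\chi_2(ab)=1$ and the expression collapses to $q\,\chi_2(2)\chi_2(\rmT(ab))=q\,\chi_2(2\rmT(ab))$, as required.

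Part~\ref{trans} is the only part needing a genuinely new input. By part~\ref{02}, the two differences in the statement equal $q\,\chi_2(2\rmT(ac))$ and $q\,\chi_2(2\rmT(bc))$, so the asserted equality is equivalent to $\chi_2(2\rmT(ac))=\chi_2(2\rmT(bc))$. The key is Corollary~\ref{Discabc}, which applies to the distinct triple $a,b,c$ and guarantees that $D:=2\rmT(ab)\rmT(ac)\rmT(bc)$ is a nonzero square; in particular each trace is nonzero, so the three signs $\chi_2(2\rmT(ab)),\chi_2(2\rmT(ac)),\chi_2(2\rmT(bc))$ all lie in $\{\pm1\}$. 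Multiplying them gives $\chi_2\bigl(8\,\rmT(ab)\rmT(ac)\rmT(bc)\bigr)=\chi_2(4D)=\chi_2(4)\chi_2(D)=1$, since both $4$ and $D$ are squares. Thus the product of the three signs is $1$, so $\chi_2(2\rmT(ac))=\chi_2(2\rmT(bc))$ holds precisely when $\chi_2(2\rmT(ab))=1$, i.e.\ when $\chi_2(2\rmT(ab))\neq-1$ (the degenerate case $a=b$ makes both sides trivially true). The main, and fairly modest, obstacle is recognizing that the discriminant identity of Corollary~\ref{Discabc} is exactly the three-term relation needed to convert the pairwise comparison in~\ref{trans} into a condition on the single sign $\chi_2(2\rmT(ab))$; the remainder is bookkeeping with the quadratic character.
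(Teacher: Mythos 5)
Your proposal is correct and follows essentially the same route as the paper: both reduce the two-argument quantities via the preceding theorem (with the $\frac{q-1}{4}\epsilon_{ab}$ terms cancelling), then invoke Lemma~\ref{kappazzbar} for \ref{13}, Theorem~\ref{kappa1minus} together with the fact that norm-one elements are squares for \ref{02}, and Corollary~\ref{Discabc} for \ref{trans}. Your phrasing of \ref{trans} as ``the product of the three signs is $1$'' is just a cleaner restatement of the paper's identical discriminant argument, and your handling of the degenerate case $a=b$ matches the paper's as well.
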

\begin{proof}
  We have \ref{13} following directly from Lemma~\ref{kappazzbar}.
 
  To see that $\ref{02}$ holds, notice that since $a$ and $b$ have
  norm $1$ they are both squares, $\chi_{2}(ab) = 1$; then by
  Theorem~\ref{kappa1minus} we have 
  \begin{align*}
  \kappa_{0}(a^{2}, b^{2}) - \kappa_{2}(a^{2}, b^{2}) & = 
    \kappa_{\chi_{2}(2)}(ab) - \kappa_{-\chi_{2}(2)}(ab) \\ & = 
    \chi_{2}(2)\left( \kappa_{1}(ab) - \kappa_{-1}(ab) \right)\\ & =
    q\cdot\chi_{2}(2\rmT(ab)).
  \end{align*}

  Finally to obtain \ref{trans}, consider $c \in \bbE^{*} \setminus
  \{a,b\}$ with $\rmN(c)  = 1$ and $\rmT(c^{2}) = 0$.  We can assume
  that $a \neq b$ since if so,
  $\kappa_{0}(a^{2},c^{2})-\kappa_{2}(a^{2},c^{2}) = \kappa_{0}(b^{2},
  c^{2}) - \kappa_{2}(b^{2}, c^{2})$ and $\chi_{2}(2\rmT(ab))=0$.
  Then by Lemma~\ref{SquareConic}, $\{a,b,c\}$ forms a basis for
  $\bbE$ over $\bbF$, and so applying Corollary~\ref{Discabc},
  $\chi_{2}(2\rmT(ab)\rmT(ac)\rmT(bc))=1$. By \ref{02},  
  \[
  \kappa_{0}(a^{2},c^{2})-\kappa_{2}(a^{2},c^{2}) = \kappa_{0}(b^{2},
  c^{2}) - \kappa_{2}(b^{2}, c^{2})
  \]
  if and only if $\chi_{2}(2\rmT(ac)) = \chi_{2}(2\rmT(bc))$ and, since
  both are nonzero, this implies that $\chi_{2}(\rmT(ac))
  \chi_{2}(\rmT(bc)) =1$.  Thus we also have $\chi_{2}(2\rmT(ab))=1$.
\end{proof}

This result shows that 
$\bm{B}_{1} = \bm{B}_{3}$ so we have that Equation \eqref{eqHdef}
becomes
\begin{equation}
  \bm{H} =  \bm{B}_{0} - \bm{B}_{2} - \bm{I} = 
  \begin{bmatrix} 
    \kappa_{0}(a_{i}^{2}, a_{j}^{2}) -
    \kappa_{2}(a_{i}^{2}, a_{j}^{2}) 
  \end{bmatrix} - \bm{I}.
\end{equation}
Applying Corollary~\ref{ThBig}~\ref{02}, we have
\begin{equation}\label{eqH}
  \bm{H} = 
  \begin{bmatrix} q \cdot \chi_{2}(2\rmT(a_{i}a_{j}))\end{bmatrix} -
  \bm{I}.
\end{equation}

\begin{lemma}\label{LemH}
Let $\bm{H}$ be as in Equation~\eqref{eqH}, with the rows and columns
indexed by the $a_{i}$ from Equation~\eqref{eqa} in the natural way.
Put 
\begin{align*}
  X_{1} & = \{ a_{i} : \ 1 \leq i \leq q+1 \ | \
  \chi_{2}(2\rmT(a_{1}a_{i})) \neq -1 \} \mbox{ and}\\  
  X_{2} & = \{  a_{i} : \ 1 \leq i \leq q+1 \ | \
  \chi_{2}(2\rmT(a_{1}a_{i}))=-1 \},
\end{align*}
and let $\bm{K}$ be the adjacency matrix of the graph $K_{X_{1}}
\oplus K_{X_{2}}$ and $\bm{K}^{\prime}$ be the adjacency matrix of 
the complementary graph.  Then
\begin{equation}\label{eqHfinal}
  \bm{H} = q \bm{K} - q \bm{K}^{\prime} - I.
\end{equation}
\end{lemma}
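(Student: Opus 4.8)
We have a $(q+1)\times(q+1)$ matrix $\bm{H}$ with entries $q\cdot\chi_2(2\rmT(a_ia_j))-\delta_{ij}$, where the diagonal contribution of $-1$ comes from subtracting $\bm{I}$ (note $\chi_2(2\rmT(a_i^2))=\chi_2(0)=0$, so the off-diagonal description via $\chi_2$ is the only live part). The sets $X_1,X_2$ partition the index set according to the value of $\chi_2(2\rmT(a_1a_i))$, and the assertion is that $\bm{H}$ equals $q\bm{K}-q\bm{K}'-\bm{I}$, where $\bm{K}$ is the adjacency matrix of the disjoint union of two cliques $K_{X_1}\oplus K_{X_2}$ and $\bm{K}'$ is the adjacency matrix of its complement (the complete bipartite graph between $X_1$ and $X_2$).

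\begin{proof}
The plan is to verify the claimed identity entrywise, distinguishing diagonal from off-diagonal entries, and the key input is the transitivity property of the relation ``$\chi_2(2\rmT(\cdot\,\cdot))\neq -1$'' supplied by Corollary~\ref{ThBig}~\ref{trans}. First I would treat the diagonal: for each $i$ we have $\rmT(a_i^2)=0$ by the defining property of the set in Equation~\eqref{eqa}, hence $\chi_2(2\rmT(a_ia_i))=\chi_2(0)=0$, so the bracketed matrix in Equation~\eqref{eqH} has zero diagonal and the diagonal of $\bm{H}$ is exactly $-1$; on the right-hand side both $\bm{K}$ and $\bm{K}'$ are adjacency matrices of simple graphs and so vanish on the diagonal, leaving $-\bm{I}$, which matches.

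Next I would handle the off-diagonal entries $i\neq j$. Here $a_i\neq a_j$, so by Lemma~\ref{SquareConic} the points $a_i^2,a_j^2$ are distinct and $\rmT(a_ia_j)\neq 0$, whence $\chi_2(2\rmT(a_ia_j))=\pm1$. The entry of $\bm{H}$ is therefore $+q$ or $-q$. The crucial point is to show that the sign pattern of $\chi_2(2\rmT(a_ia_j))$ over all pairs $(i,j)$ is precisely the edge/non-edge pattern of $K_{X_1}\oplus K_{X_2}$; that is, $\chi_2(2\rmT(a_ia_j))=+1$ exactly when $a_i,a_j$ lie in the same part $X_1$ or the same part $X_2$. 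This is the heart of the argument, and it is where Corollary~\ref{ThBig}~\ref{trans} does the work: that result, read through the identity $\kappa_0-\kappa_2=q\chi_2(2\rmT(\cdot\,\cdot))$ of part~\ref{02}, says exactly that for distinct norm-one, trace-square-zero elements, $\chi_2(2\rmT(a_ia_j))\neq -1$ is an equivalence relation. Concretely, writing $a\sim b$ for $\chi_2(2\rmT(ab))=+1$, part~\ref{trans} with $c$ ranging suitably gives transitivity, and symmetry is immediate from $\rmT(a_ia_j)=\rmT(a_ja_i)$; reflexivity is vacuous here since we have excluded the diagonal. Thus $\sim$ partitions $\{a_1,\dots,a_{q+1}\}$ into equivalence classes.

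The final step is to check that this partition has exactly two classes, namely $X_1$ and $X_2$ as defined relative to the distinguished element $a_1$. By definition $X_1$ is the $\sim$-class of $a_1$ together with $a_1$ itself, and $X_2$ is its complement. To see that $X_2$ is a single $\sim$-class I would argue: for $a_i,a_j\in X_2$ we have $a_i\not\sim a_1$ and $a_j\not\sim a_1$, i.e. $\chi_2(2\rmT(a_1a_i))=\chi_2(2\rmT(a_1a_j))=-1$; applying Corollary~\ref{ThBig}~\ref{trans} with the triple $\{a_1,a_i,a_j\}$ (using that the three underlying squares are noncollinear, hence the elements form a basis) forces $\chi_2(2\rmT(a_ia_j))=+1$, so $a_i\sim a_j$. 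Hence any two elements of $X_2$ are $\sim$-equivalent, and combined with the fact that no element of $X_1$ is $\sim$-equivalent to an element of $X_2$, we conclude the two classes are exactly $X_1$ and $X_2$. Therefore $\chi_2(2\rmT(a_ia_j))=+1$ precisely on the within-part pairs, which are the edges of $K_{X_1}\oplus K_{X_2}$, and equals $-1$ precisely on the cross pairs, which are the edges of the complement $\bm{K}'$. Entrywise this gives $q$ times the $(i,j)$ entry of $\bm{K}$ minus $q$ times the $(i,j)$ entry of $\bm{K}'$ off the diagonal, and $-1$ on the diagonal, which is exactly $\bm{H}=q\bm{K}-q\bm{K}'-\bm{I}$ as claimed. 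I expect the main obstacle to be the careful bookkeeping showing $X_2$ is a single class rather than possibly splitting further; this is resolved entirely by the transitivity in part~\ref{trans}, so no new estimates are needed.
\end{proof}
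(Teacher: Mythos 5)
Your proof is correct and takes essentially the same route as the paper: both arguments reduce to an entrywise check of the off-diagonal signs and rest on the equivalence in Corollary~\ref{ThBig}~\ref{trans} applied to triples containing the distinguished element $a_{1}$, with your equivalence-relation packaging being just a more explicit version of the paper's direct case analysis. One small correction: the non-vanishing $\rmT(a_{i}a_{j}) \neq 0$ for $i \neq j$ does not follow from Lemma~\ref{SquareConic} as you cite, but from Corollary~\ref{Discabc} (applied to $a_{i}$, $a_{j}$ and any third element of the set), which is exactly the citation the paper's proof uses.
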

\begin{proof}
  It is clear that the diagonal entries of $\bm{H}$ are $-1$, so we
  only need to consider the entries $\bm{H}_{ij}$ for $i \neq j$;
  also, since $\bm{H}$ is symmetric, we can assume that $i < j$. All
  entries of $\bm{H}$ that are not on the diagonal are equal to $\pm
  q$.
  
  Assume that $\bm{H}_{ij} = -q$; then $\chi_{2}(2\rmT(a_{i}a_{j})) =
  -1$.  If $i=1$, then $a_{i} \in X_{1}$ and
  $\chi_{2}(2\rmT(a_{1}a_{j}))=-1$, so $a_{j} \in X_{2}$. On the other
  hand, if $i \neq 1$ then $a_{1} \in \bbE^{*} \setminus \{ a_{i},
  a_{j} \}$ and, applying Corollary~\ref{ThBig}~\ref{trans},
  we must have $\chi_{2}(2\rmT(a_{1}a_{i})) \neq
  \chi_{2}(2\rmT(a_{1}a_{j}))$, with both nonzero by
  Corollary~\ref{Discabc}.  Therefore we can assume (WLOG) that $a_{i}
  \in X_{1}$ and $a_{j} \in X_{2}$. 
\end{proof}

\subsection{Proof of the main theorem}

\begin{theorem}\label{th:final}
  Let $q = p^{h}$ be a prime power with $q \equiv 5 \mbox{ or } 9 
  \bmod{12}$.  Then the hyperbolic quadric $\cQ^{+}(5,q)$ has a
  decomposition $\pi_{1} \cup \pi_{2} \cup \cT_{1} \cup \cT_{2}$ into 
  disjoint tight sets, with $\pi_{1}$ and $\pi_{2}$ being generators, 
  and $\cT_{1} \simeq \cT_{2}$ being
  $\frac{1}{2}(q^{2} -1)$-tight sets. 
\end{theorem}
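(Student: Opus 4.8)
The plan is to exploit all the structure assembled above and reduce the existence of the tight sets to a single eigenvector computation. By Theorem~\ref{EvecMod}, a set $\cT$ disjoint from $\pi_{1}\cup\pi_{2}$ is an $x$-tight set precisely when $\bm{c}^{\prime}-\frac{x}{q^{2}-1}\bm{j}^{\prime}$ is an eigenvector of $\bm{A}^{\prime}$ for the eigenvalue $q^{2}-1$. Since the sets I am after will be unions of $G$-orbits, their characteristic vectors are constant on the classes of the tactical decomposition, so it suffices to produce eigenvectors of the quotient matrix $\bm{B}$ for the eigenvalue $q^{2}-1$ and then expand them to $\bm{A}^{\prime}$ via the eigenvalue/eigenvector interlacing result stated above. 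Because $\bm{B}$ is block-circulant, Theorem~\ref{BCirc} with $\zeta=\rmi$ turns this into the problem of finding an eigenvector of $\bm{H}$ for the eigenvalue $q^{2}-1$.

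Next I would carry out the eigenvector computation for $\bm{H}$. By Lemma~\ref{LemH} we have $\bm{H}=q\bm{K}-q\bm{K}^{\prime}-\bm{I}$, where $\bm{K}$ is the adjacency matrix of the disjoint union of cliques $K_{X_{1}}\oplus K_{X_{2}}$ and $\bm{K}^{\prime}$ is the adjacency matrix of the complementary complete bipartite graph between $X_{1}$ and $X_{2}$. Using $\bm{K}+\bm{K}^{\prime}=\bm{J}-\bm{I}$, the matrix $\bm{K}-\bm{K}^{\prime}$ acts as $-\bm{I}$ on the space of vectors summing to zero on each of $X_{1}$ and $X_{2}$, while on the two-dimensional space spanned by $\mathbf{1}_{X_{1}}$ and $\mathbf{1}_{X_{2}}$ it has eigenvalues $q$ and $-1$, the eigenvalue $q$ belonging to $\bm{v}:=\mathbf{1}_{X_{1}}-\mathbf{1}_{X_{2}}$ (here one uses $|X_{1}|+|X_{2}|=q+1$; the degenerate case $X_{2}=\varnothing$ simply gives $\bm{v}=\mathbf{1}$). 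Consequently $\bm{H}\bm{v}=(q^{2}-1)\bm{v}$, so $\bm{v}$ is the eigenvector I want.

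I would then lift $\bm{v}$ back up. By Theorem~\ref{BCirc} the vector $[\bm{v};\rmi\bm{v};-\bm{v};-\rmi\bm{v}]$ is a complex eigenvector of $\bm{B}$ for $q^{2}-1$; since $\bm{B}$ is real, its real and imaginary parts
\[
\bm{u}_{1}=\begin{bmatrix}\bm{v}\\ \bm{0}\\ -\bm{v}\\ \bm{0}\end{bmatrix},\qquad
\bm{u}_{2}=\begin{bmatrix}\bm{0}\\ \bm{v}\\ \bm{0}\\ -\bm{v}\end{bmatrix}
\]
are real eigenvectors of $\bm{B}$ for the same eigenvalue. The key observation is that, because $x=\tfrac12(q^{2}-1)$ makes $\frac{x}{q^{2}-1}=\tfrac12$, the vectors $\bm{c}^{\prime}_{1,2}:=\tfrac12\bigl(\bm{j}^{\prime}\pm(\bm{u}_{1}+\bm{u}_{2})\bigr)$ have all entries in $\{0,1\}$: block by block, $\tfrac12(\mathbf{1}\pm\bm{v})$ is the indicator of $X_{1}$ or of $X_{2}$. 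Expanding $\bm{c}^{\prime}_{1}$ and $\bm{c}^{\prime}_{2}$ to be constant on the $G$-orbits gives the characteristic vectors of two complementary unions of $G$-orbits $\cT_{1},\cT_{2}$, and since $\bm{c}^{\prime}_{i}-\tfrac12\bm{j}^{\prime}=\pm\tfrac12(\bm{u}_{1}+\bm{u}_{2})$ is an eigenvector of $\bm{A}^{\prime}$ for $q^{2}-1$, Theorem~\ref{EvecMod} shows each $\cT_{i}$ is a $\tfrac12(q^{2}-1)$-tight set. A quick count of orbits confirms $\pi_{1}\cup\pi_{2}\cup\cT_{1}\cup\cT_{2}=\cQ^{+}(5,q)$.

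Finally I would exhibit the isomorphism $\cT_{1}\simeq\cT_{2}$. The map $(u,v)\mapsto(u,\omega^{2}v)$ scales $\rmQ$ by $\omega^{2}$, hence is a projective similarity preserving $\cQ^{+}(5,q)$; on orbit representatives it sends $(1,\omega^{s}a^{2})$ to $(1,\omega^{s+2}a^{2})$, i.e.\ it shifts $s\mapsto s+2\pmod 4$ while fixing the $X_{1}/X_{2}$ label, and one checks directly that this interchanges the orbit set of $\cT_{1}$ with that of $\cT_{2}$. The heavy lifting (the Gauss-sum evaluation of Theorem~\ref{kappa1minus} and the two-clique structure of Lemma~\ref{LemH}) is already done, so the only real care needed is bookkeeping: extracting a genuine $0/1$ vector from the complex block-circulant eigenvector and verifying that the shift by $\tfrac12\bm{j}^{\prime}$ lands in $\{0,1\}$, which is precisely where the parameter $x=\tfrac12(q^{2}-1)$ enters.
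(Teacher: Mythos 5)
Your proposal is correct and follows essentially the same route as the paper: the same reduction via Theorem~\ref{EvecMod} and the block-circulant quotient matrix $\bm{B}$, the same eigenvector $\bm{c}_{X_{1}}-\bm{c}_{X_{2}}$ of $\bm{H}$ (which you verify by spectral decomposition of $\bm{K}-\bm{K}^{\prime}$ rather than the paper's direct computation), the same lift to $\bm{w}_{1},\bm{w}_{2}$ and shift by $\tfrac12\bm{j}^{\prime}$ to get $0/1$ vectors, and the same similarity $(u,v)\mapsto(u,\omega^{2}v)$ giving $\cT_{1}\simeq\cT_{2}$. The only additions are cosmetic (the remark on the degenerate case $X_{2}=\varnothing$ and the explicit real/imaginary-part argument), so there is nothing to correct.
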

\begin{proof}
  Define the sets $X_{1}$ and $X_{2}$ and matrices $\bm{K}$ and 
  $\bm{K}^{\prime}$ and $\bm{H}$ as in Lemma~\ref{LemH}; then
  eigenvectors of the matrix $\bm{H} =  q \cdot \bm{K} - q \cdot
  \bm{K}^{\prime} - I$ can be used to obtain eigenvectors of the
  matrix $\bm{B}$ defined in Lemma~\ref{TDMat}, which is a column sum
  matrix associated to a tactical decomposition of the collinearity
  matrix $\bm{A}$ of  $\cQ^{+}(5,q)$ after throwing away rows and
  columns corresponding to points in disjoint generators 
  $\pi_{1}$ and $\pi_{2}$. 

  Now we form the vector 
  \[
  \bm{v}  = \bm{c}_{X_{1}} - \frac{1}{2} \bm{j}
  = \frac{1}{2} \left( \bm{c}_{X_{1}} -
    \bm{c}_{X_{2}}\right),
  \]
  where $\bm{c}_{X_{1}}$ and $\bm{c}_{X_{2}}$ are the characteristic
  vectors of the sets $X_{1}$ and $X_{2}$.
  We have that
  \begin{align*}
    \displaystyle \bm{v} \bm{H}
    & =  \frac{1}{2}\left( \bm{c}_{X_{1}} - \bm{c}_{X_{2}}\right)
    \left( q\bm{K} - q\bm{K}^{\prime} - I\right)\\ 
    & =  \frac{1}{2} \lbrack
    \left( q \left( |X_{1}| -1 \right) \bm{c}_{X_{1}}
      - q |X_{1}| \bm{c}_{X_{2}} - \bm{c}_{X_{1}} \right) 
    - \left( q \left( |X_{2}| -1 \right) \bm{c}_{X_{2}}
      - q |X_{2}| \bm{c}_{X_{1}} - \bm{c}_{X_{2}}
    \right)
    \rbrack\\
    & =  \frac{1}{2}\left( q \left( |X_{1}| + |X_{2}| -1 \right) -1
    \right) \bm{c}_{X_{1}} - \left( \left( |X_{1}| + |X_{2}| -1 \right)
      -1 \right) \bm{c}_{X_{2}} \\ 
    & =  \frac{1}{2}(q^2 -1) (\bm{c}_{X_{1}} - \bm{c}_{X_{2}})
    \mbox{.} 
  \end{align*}
  So $\bm{v}$ is an eigenvector of $\bm{H}$ for eigenvalue $(q^{2} -1)$ and
  by Theorem~\ref{BCirc}, putting 
  \begin{align*}
    \bm{w}_{1} & = \begin{bmatrix*}[r]
      \bm{v}\\0\\-\bm{v}\\0 \end{bmatrix*}, & 
    \bm{w}_{2} & =\begin{bmatrix*}[r]
      0\\ \bm{v}\\0\\-\bm{v} \end{bmatrix*}, 
  \end{align*}
  we have that $\bm{w}_{1}$ and $\bm{w}_{2}$ are eigenvectors of
  $\bm{B}$ for this same eigenvalue.  Applying Theorem~\ref{EvecMod},
  vectors of the form 
  \[
  \bm{w} = \pm \bm{w}_{1} \pm \bm{w}_{2}
  \]
  correspond to $\frac{1}{2}(q^{2} -1)$-tight sets of $\cQ^{+}(5,q)$
  which are disjoint from $(\pi_{1} \cup \pi_{2})$.  

  Put $\cT_{1}$ to be the tight set corresponding to
  $\bm{w}_{1}+\bm{w}_{2}$ and $\cT_{2}$ to be the tight set
  corresponding to $-\bm{w}_{1}-\bm{w}_{2}$; then it is clear that
  $\cT_{1}$ and $\cT_{2}$ are disjoint.  Comparing the vectors
  $\bm{w}_{1}$ and $\bm{w}_{2}$ to the ordering of the entries of
  $\bm{B}$ shown in Equation~\eqref{eqOrbitOrder}, we see that
  \begin{align*}
    \cT_{1} = & 
    \left( \cup_{a_{i} \in X_{1}}  \left( (1, a_{i})^{G} \cup (1,
        \omega a_{i})^{G} \right) \right) \cup 
    \left( \cup_{a_{j} \in X_{2}} \left( (1, \omega^{2} a_{j})^{G} \cup
        (1, \omega^{3} a_{j})^{G} \right) \right) \mbox{ and}\\
    \cT_{2} = & 
    \left( \cup_{a_{i} \in X_{2}}  \left( (1, a_{i})^{G} \cup (1,
        \omega a_{i})^{G} \right) \right) \cup 
    \left( \cup_{a_{j} \in X_{1}} \left( (1, \omega^{2} a_{j})^{G} \cup
        (1, \omega^{3} a_{j})^{G} \right) \right),
  \end{align*}
  where $G$ is the group defined in Lemma~\ref{GrpOrbs}.
  It is clear that the projective similarity $(x,y) \mapsto (x,
  \omega^{2} y)$ sends the set $\cT_{1}$ to $\cT_{2}$.
\end{proof}

Note that there are a couple of decisions made in the proof of this
theorem.  The first is the ordering of the $a_{i}$; the choice of
$a_{1}$ affects the definitions of the sets $X_{1}$ and $X_{2}$.
However, ordering these elements differently can only possibly
interchange the role of $X_{1}$ and $X_{2}$ (and so interchange the
tight sets $\cT_{1}$ and $\cT_{2}$).  Also, in defining the two tight
sets, we could just as easily put $\cT_{1}^{\prime}$ to be the tight
set corresponding to $\bm{w}_{1}-\bm{w}_{2}$ and $\cT_{2}^{\prime}$ 
to be the tight set corresponding to $-\bm{w}_{1}+\bm{w}_{2}$; this
would make
\begin{align*}
    \cT_{1}^{\prime} = & 
    \left( \cup_{a_{i} \in X_{1}}  \left( (1, a_{i})^{G} \cup (1,
        \omega^{3} a_{i})^{G} \right) \right) \cup 
    \left( \cup_{a_{j} \in X_{2}} \left( (1, \omega^{2} a_{j})^{j} \cup
        (1, \omega a_{j})^{G} \right) \right) \mbox{ and}\\
    \cT_{2}^{\prime} = & 
    \left( \cup_{a_{i} \in X_{2}}  \left( (1, a_{i})^{G} \cup (1,
        \omega^{3} a_{i})^{G} \right) \right) \cup 
    \left( \cup_{a_{j} \in X_{1}} \left( (1, \omega^{2} a_{j})^{j} \cup
        (1, \omega a_{j})^{G} \right) \right).
  \end{align*}
  It is clear in this case that, under the projective similarity
  $(x,y) \mapsto(x, \omega y)$, $\cT_{1}^{\prime} \mapsto \cT_{1}$ and 
  $\cT_{2}^{\prime} \mapsto \cT_{2}$.
 
Let $\phi: \bbF_{q^3} \rightarrow \bbF_{q^3}$: $x \mapsto x^q$. Hence $\phi$ has order $3$. 
The map $e: (u,v) \mapsto (\phi(u),\phi(v))$ is a semi-similarity of the formed space $(V,f)$. 
It induces a collineation of $\cQ^+(5,q)$. It is straightforward to check that $\phi$ is a permutation
of $\cS$. But since $T(\phi(x)) = T(x)$, and $\phi$ maps the squares of $\bbF_{q^3}$ to squares, 
$\phi$ is also a permutation of the sets $X_1$ and $X_2$.  
Also, it can be seen that the map $o: (u,v) \mapsto (v, \omega u)$ is
a collineation of $\cQ^{+}(5,q)$ that stabilizes $\cT_{1}$ and $\cT_{2}$.
The following corollary is now obvious.
   
\begin{corollary}
The tight sets constructed in Theorem~\ref{th:final} are stabilized by
the group $\langle c,z,e,o \rangle$. 
This group has order $3\frac{(q-1)}{2}(q^2+q+1)$. 
\end{corollary}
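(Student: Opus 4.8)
The plan is to establish the two claims of the corollary separately. For the stabilization, note first that by the construction in Theorem~\ref{th:final} each of $\cT_1$ and $\cT_2$ is a union of orbits of $G=\langle c,z\rangle$, so $c$ and $z$ stabilize both sets automatically. For $e$ I would use that it fixes $\omega\in\bbF^*$ (indeed $\omega^q=\omega$), so that $e(1,\omega^s a_i^2)=(1,\omega^s\phi(a_i)^2)$ preserves the layer index $s$; combined with the normalization $ece\inv=c^q$, $eze\inv=z$ computed below and the already-recorded fact that $\phi$ permutes each of $X_1$ and $X_2$ setwise, this shows that $e$ carries each $G$-orbit occurring in $\cT_1$ to another orbit occurring in $\cT_1$, and likewise for $\cT_2$. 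For $o$ I would rely on the direct check, noted just before the statement, that $o$ fixes $\cT_1$ and $\cT_2$ setwise.

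For the order I would work inside $\pgl(6,q)$. Since $\phi$ fixes $\bbF$ pointwise, all four maps $c,z,e,o$ are $\bbF$-linear on $V=\bbE^2$, so two of them agree projectively exactly when they differ by a scalar in $\bbF^*$. I would first record the orders and relations of the generators. Using Lemma~\ref{muLI} (an element of $\langle\mu\rangle$ lies in $\bbF^*$ only if it equals $1$) one checks that $c$ has order $q^2+q+1$ and $z$ has order $\tfrac{q-1}{4}$, that $c$ and $z$ commute, and that $\langle c\rangle\cap\langle z\rangle=1$ (an identity $c^m=z^n$ up to scalar would force $\mu^m\in\bbF^*$, hence $m\equiv 0$); therefore $|G|=\tfrac{q-1}{4}(q^2+q+1)$.

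Next I would adjoin $e$ and then $o$. A short computation gives $ece\inv=c^q$ and $eze\inv=z$, so $e$ normalizes $G$; since $e$ restricts to the nontrivial Frobenius $x\mapsto x^q$ on the invariant generator $\pi_1=\{(x,0)\}$, whereas $G$ acts $\bbE$-linearly there, no power of $e$ can agree projectively with an element of $G$, giving $\langle e\rangle\cap G=1$ and hence $|N|=3\cdot\tfrac{q-1}{4}(q^2+q+1)$ for $N:=\langle c,z,e\rangle$. Direct computation then yields $oco\inv=c\inv$, $ozo\inv=z\inv$ and $oeo\inv=e$, so $o$ normalizes $N$, while $o^2=\omega\cdot\mathrm{id}$ is a scalar, so $o$ has projective order $2$ with $o^2\in N$. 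Because every element of $N$ fixes $\pi_1$ and $\pi_2$ setwise while $o$ interchanges them ($o(x,0)=(0,\omega x)\in\pi_2$), we have $o\notin N$; therefore $\langle c,z,e,o\rangle=N\rtimes\langle o\rangle$ has order $2\cdot 3\cdot\tfrac{q-1}{4}(q^2+q+1)=3\,\tfrac{q-1}{2}(q^2+q+1)$, as claimed.

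The conjugation identities are routine; the load-bearing points are the three independence checks that make each extension multiply the order by exactly the order of the adjoined generator: $\langle e\rangle\cap G=1$ (factor $3$), $o\notin N$ (factor $2$), and $o^2\in N$ (no further contribution). I expect these to be the main obstacle, but all three are settled cleanly by passing to the induced actions on the two fixed generators $\pi_1$ and $\pi_2$ — $G$ acts $\bbE$-linearly on $\pi_1$, $e$ acts there as a nontrivial field automorphism, and $o$ swaps $\pi_1$ with $\pi_2$ — together with the observation that every scalar arising in the computations lies in $\bbF^*$, which is what forces the projective orders to equal the claimed values.
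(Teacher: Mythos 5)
Your proposal is correct and follows the same route as the paper: the paper declares the corollary ``now obvious'' from the remarks immediately preceding it (that $\phi$ permutes each of $X_{1}$ and $X_{2}$, hence $e$ stabilizes $\cT_{1}$ and $\cT_{2}$, and that $o$ does as well), which is exactly what you invoke for the stabilization claim, supplemented by the observation that $e$ normalizes $G$ and so maps $G$-orbits to $G$-orbits. The paper offers no argument at all for the order; your computation --- $\langle c,z\rangle \cong \langle c\rangle \times \langle z\rangle$ of order $\frac{q-1}{4}(q^{2}+q+1)$, extended by $e$ of projective order $3$ meeting $G$ trivially (via the Frobenius action on $\pi_{1}$), then by $o$ of projective order $2$ lying outside because it swaps $\pi_{1}$ and $\pi_{2}$ --- is correct, with all the conjugation relations checking out, and supplies the detail the paper leaves implicit.
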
  
Note that the map $o$ interchanges the two sets of generators of
$\cQ^{+}(5,q)$, and thus does not induce a collineation of $\pg(3,q)$
under the Klein correspondence.

\section{The Klein correspondence and Cameron--Liebler line classes}
The Klein correspondence maps points of $\cQ^{+}(5,q)$ to lines of
$\pg(3,q)$, with collinear points of $\cQ^{+}(5,q)$ corresponding to
intersecting lines of $\pg(3,q)$.  Each generator of $\cQ^{+}(5,q)$
then corresponds to a set of $q^{2}+q+1$ pairwise intersecting lines
of $\pg(3,q)$, and so it's image is either of the form $\st(\bm{p})$
for some point $\bm{p}$ or $\lne(\pi)$ for some plane $\pi$ depending
on the system the generator came from. We will assume that the
generators in the same system as $\pi_{1}$ correspond to the point
stars of $\pg(3,q)$, and that the generators in the same system as
$\pi_{2}$ correspond to the planes.  It is then clear that any
collineation of $\cQ^{+}(5,q)$ induces either a collineation or a
correlation of $\pg(3,q)$ depending on whether it stabilizes or
interchanges the two systems of generators.

Under the Klein correspondence, a $x$-tight set corresponds to a set
of lines in $\pg(3,q)$ called a Cameron--Liebler line class with
parameter $x$; while there are many equivalent definitions, we will
find the following characterizations first described in
\cite{Pentt1991} most useful.
\begin{theorem}\label{Tintsizes}
A set of lines $\cL$ is a Cameron--Liebler line class with parameter
$x$ if and only if the following equivalent conditions are met:
\begin{enumerate}[label=\rm{(\roman*)}, ref=\rm{(\roman*)}]
\item for every incident point-plane pair $(\bm{r}, \tau)$ we have
\[
|\st(\bm{r}) \cap \cL| + |\lne(\tau) \cap \cL| = x+(q+1)|\pen(\bm{r},
\tau) \cap \cL|;
\]
\item for every line $\ell \in \pg(3,q)$, the number of lines $m \in
  \cL$ distinct from $\ell$ and intersecting $\ell$ nontrivially is
  given by
\[
x(q+1)+(q^{2}-1)\bm{c}_{\cL}(\ell).
\]
\end{enumerate}
\end{theorem}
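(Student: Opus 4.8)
The plan is to prove the cycle of implications (CL line class with parameter $x$) $\Rightarrow$ (ii) $\Rightarrow$ (i) $\Rightarrow$ (CL line class with parameter $x$), where the outer two steps pass through the Klein correspondence and the inner two are double counts inside $\pg(3,q)$. The bridge between the line-intersection condition (ii) and tightness is essentially a restatement: under the Klein correspondence a line $\ell$ becomes a point $\bm{p}_{\ell}$ of $\cQ^{+}(5,q)$, lines meeting $\ell$ become points collinear with $\bm{p}_{\ell}$, and $\cL$ becomes its image $\cT$. Since $\bm{p}_{\ell} \in \bm{p}_{\ell}^{\perp}$, the number of lines $m \in \cL$ with $m \neq \ell$ meeting $\ell$ equals $|\bm{p}_{\ell}^{\perp} \cap \cT| - \bm{c}_{\cT}(\bm{p}_{\ell})$; inserting the defining equality of an $x$-tight set turns this into exactly $x(q+1) + (q^{2}-1)\bm{c}_{\cL}(\ell)$, and the reasoning reverses. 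As the paper has already identified $x$-tight sets with Cameron--Liebler line classes of parameter $x$ under Klein, this yields (CL line class with parameter $x$) $\iff$ (ii).

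For (i) $\Rightarrow$ (ii) I would fix a line $\ell$ and sum the identity in (i) over all $(q+1)^{2}$ incident pairs $(\bm{r},\tau)$ with $\bm{r} \in \ell \subseteq \tau$. The key book-keeping facts are that $\sum_{\bm{r} \in \ell} |\st(\bm{r}) \cap \cL|$ and $\sum_{\tau \supseteq \ell} |\lne(\tau) \cap \cL|$ each equal $(q+1)\bm{c}_{\cL}(\ell) + N(\ell)$, where $N(\ell)$ is the quantity counted in (ii) (a line $m \neq \ell$ meeting $\ell$ does so in a unique point and spans a unique plane with $\ell$), and that $\sum_{(\bm{r},\tau)} |\pen(\bm{r},\tau) \cap \cL| = (q+1)^{2}\bm{c}_{\cL}(\ell) + N(\ell)$. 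After dividing by $q+1$ the sum collapses to $N(\ell) = x(q+1) + (q^{2}-1)\bm{c}_{\cL}(\ell)$, which is (ii); notably this direction needs no information about $|\cL|$.

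For (ii) $\Rightarrow$ (i) I would first extract the size $|\cL| = x(q^{2}+q+1)$ by summing (ii) over all lines of $\pg(3,q)$: evaluating $\sum_{\ell} N(\ell)$ once through stars (each point lies on $q^{2}+q+1$ lines and each line of $\cL$ carries $q+1$ points) and once through the right-hand side of (ii) gives $(q+1)(q^{2}+1)|\cL| = x(q+1)(q^{2}+1)(q^{2}+q+1)$. Then, fixing an incident pair $(\bm{r},\tau)$, I would sum (ii) over the $q+1$ lines $n$ of the pencil $\pen(\bm{r},\tau)$ and evaluate $\sum_{n} N(n)$ by sorting each $m \in \cL$ according to whether it lies in the pencil, passes through $\bm{r}$ but leaves $\tau$, lies in $\tau$ but misses $\bm{r}$, or avoids both; these contribute $q$, $q+1$, $q+1$, and $1$ pencil lines respectively. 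This produces $q\bigl(|\st(\bm{r}) \cap \cL| + |\lne(\tau) \cap \cL|\bigr) = xq + q(q+1)|\pen(\bm{r},\tau) \cap \cL|$ once the size formula is inserted, which is precisely (i).

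The routine parts are the incidence counts; the step I expect to be the real crux is (ii) $\Rightarrow$ (i), both because it is the only implication requiring the exact value of $|\cL|$ (so the size computation must be carried out first and cannot be taken for granted) and because the four-way classification of a line $m \in \cL$ relative to the flag $(\bm{r},\tau)$ has to be executed carefully to pin down the coefficients $q$, $q+1$, $q+1$, $1$. Everything else is either a bijective translation under Klein or a symmetric double count.
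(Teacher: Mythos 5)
Your proof is correct, but there is essentially nothing in the paper to compare it against: the paper states Theorem~\ref{Tintsizes} as a known characterization, citing \cite{Pentt1991}, and supplies no proof of its own. So your argument is a self-contained addition rather than a variant of the paper's route. Within the paper's framework it is sound. The equivalence of (ii) with being a Cameron--Liebler line class of parameter $x$ is, as you say, just the Klein translation of the definition of an $x$-tight set: $|\bm{p}^{\perp}\cap\cT| = x(q+1)+q^{2}\bm{c}_{\cT}(\bm{p})$, with the membership $\bm{p}\in\bm{p}^{\perp}$ accounting for the shift from $q^{2}$ to $q^{2}-1$; since the paper asserts this identification in Section~2, you may rely on it without circularity. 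I checked both double counts. Summing (i) over the $(q+1)^{2}$ flags on a fixed line $\ell$ gives $2(q+1)\left[N(\ell)+(q+1)\bm{c}_{\cL}(\ell)\right]$ on the left and $(q+1)^{2}x+(q+1)\left[(q+1)^{2}\bm{c}_{\cL}(\ell)+N(\ell)\right]$ on the right, which collapses to (ii) exactly as claimed. In the reverse direction, the size count is right: the coefficient $q(q+1)^{2}-(q^{2}-1)$ factors as $(q+1)(q^{2}+1)$, giving $|\cL|=x(q^{2}+q+1)$; and the four-way classification of $m\in\cL$ relative to the flag $(\bm{r},\tau)$ with multiplicities $q$, $q+1$, $q+1$, $1$ is correct --- the last case uses that a line not contained in $\tau$ meets $\tau$ in exactly one point, which lies on exactly one line of $\pen(\bm{r},\tau)$. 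One cosmetic remark: the "cycle" you announce, (CL)$\Rightarrow$(ii)$\Rightarrow$(i)$\Rightarrow$(CL), is not what you actually prove; what you prove is (CL)$\iff$(ii) together with (i)$\Rightarrow$(ii) and (ii)$\Rightarrow$(i), which is more than enough for the full equivalence, so the discrepancy is harmless.
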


The result of Theorem~\ref{th:final} gives a decomposition of the points
of $\cQ^{+}(5,q)$ into $\pi_{1}$, $\pi_{2}$, $\cT_{1}$, $\cT_{2}$,
where $\cT_{1}$ and $\cT_{2}$ are isomorphic $\frac{(q-1)}{2}$-tight
sets.  We have that $\cT_{1}$ and $\cT_{2}$ are stabilized by an
abelian group $G = \langle c, z \rangle$, where $c$ and $z$ are the
maps defined (as in Lemma~\ref{GrpOrbs}) by 
\begin{align*}
c : (x, y) & \mapsto (\mu x, \mu^{-1} y) & z : (x,y) & \mapsto (x,
\omega^{4} y).
\end{align*} 
Under the Klein correspondence, $\pi_{1}$ corresponds to the set
$\st(\bm{p})$ of lines through a common point $\bm{p}$ in $\pg(3,q)$,
$\pi_{2}$ to the set $\lne(\pi)$ of lines in a common plane $\pi \not
\ni \bm{p}$, and $\cT_{1}$ and $\cT_{2}$ to isomorphic
Cameron--Liebler line classes $\cL_{1}$ and $\cL_{2}$ each having
parameter $\frac{(q^{2} -1)}{2}$.  The group $G = \langle c,z \rangle$
induces a collineation group of $\pg(3,q)$ (which for convenience we
will also call $G$) stabilizing these four sets of lines.

\begin{theorem}\label{thm:orbits}
The orbits of $G$ on the points of $\pg(3,q)$ are as follows: 
\begin{enumerate}[label=\rm{(\roman*)}, ref=\rm{(\roman*)}]
\item $\bm{p}$ is fixed;
\item the points on $\pi$ all fall into a single orbit of size
  $q^{2}+q+1$; 
\item the remaining points fall into four orbits, each having size
  $\frac{(q-1)}{4} (q^{2}+q+1)$. 
\end{enumerate}
\end{theorem}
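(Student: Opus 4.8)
The plan is to transport the action of $G=\langle c,z\rangle$ from $\cQ^{+}(5,q)$ to $\pg(3,q)$ through the Klein correspondence and to read off the orbits from the two fixed objects. Since $c$ and $z$ preserve the two systems of generators, they induce collineations of $\pg(3,q)$; as $\pi_{1}$ and $\pi_{2}$ are fixed generators from opposite systems with $\pi_{1}$ skew to $\pi_{2}$, the point $\bm{p}$ with $\st(\bm{p})$ corresponding to $\pi_{1}$ and the plane $\pi$ with $\lne(\pi)$ corresponding to $\pi_{2}$ are fixed by $G$ and satisfy $\bm{p}\notin\pi$. This gives (i) at once and reduces the problem to describing the action of $G$ on the $q^{2}+q+1$ points of $\pi$ and on the $q^{3}-1$ points lying off $\pi$ (other than $\bm{p}$). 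The latter are organised by the lines through $\bm{p}$: each meets $\pi$ in a single point, so the set of lines through $\bm{p}$ is identified $G$-equivariantly with the points of $\pi$ via $\ell\mapsto\ell\cap\pi$ (here $c$ fixes $\pi$, so $c(\ell\cap\pi)=c(\ell)\cap\pi$).

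Next I would analyse the two generators directly. On $\pi_{2}=\{(0,y)\}$ the map $c$ acts as $y\mapsto\mu^{-1}y$, i.e.\ multiplication by $\mu^{-1}$ on $\bbE^{*}/\bbF^{*}\cong\langle\mu\rangle$, so $c$ permutes the points of $\pi_{2}$ in a single cycle of length $q^{2}+q+1$. The points of the generator $\pi_{2}$ are exactly the lines of the plane $\pi$, so $c|_{\pi}$ is a collineation of $\pi$ of order $q^{2}+q+1$; since $q\not\equiv 1\bmod 3$, the stabiliser of a point of $\pi$ in $\pgl(3,q)$ has order coprime to $q^{2}+q+1$, so $c|_{\pi}$ is fixed-point-free, hence a \emph{Singer cycle} acting regularly on the points of $\pi$. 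Meanwhile $z$ acts on $\pi_{2}$ as $y\mapsto\omega^{4}y$, which is projectively the identity, so $z$ fixes every point of $\pi_{2}$, that is, every line of $\pi$, and therefore fixes $\pi$ pointwise; as $z$ also fixes $\bm{p}$, it is a \emph{homology} with centre $\bm{p}$ and axis $\pi$. Since $\langle c\rangle$ is already transitive on the points of $\pi$ and $z$ fixes them all, the points of $\pi$ form a single $G$-orbit of size $q^{2}+q+1$, proving (ii).

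For (iii) I would exploit the homology structure of $z$. It fixes every line $\ell$ through $\bm{p}$ and acts on the $q-1$ points of $\ell\setminus(\{\bm{p}\}\cup(\ell\cap\pi))$ by multiplication by a fixed scalar $\rho\in\bbF^{*}$; as $z$ has order $(q-1)/4$, so does $\rho$, so $\langle\rho\rangle$ is the group of fourth powers and $\langle z\rangle$ has exactly four orbits of length $(q-1)/4$ on these interior points. On the other hand $c$ permutes the lines through $\bm{p}$ regularly, so the only power of $c$ fixing a given such line is the identity. A short bookkeeping step then shows that each $G$-orbit of interior points meets a fixed line $\ell_{0}$ in precisely one $\langle z\rangle$-orbit: if $c^{i}z^{j}$ carries an interior point of $\ell_{0}$ back into $\ell_{0}$, then (as $z^{j}$ fixes $\ell_{0}$) $c^{i}$ fixes $\ell_{0}$, forcing $i=0$. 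Hence the $G$-orbits correspond bijectively to the four $\langle z\rangle$-orbits on $\ell_{0}$, each inflating to size $(q^{2}+q+1)\cdot\frac{q-1}{4}=|G|$, which is (iii); together the three cases account for $1+(q^{2}+q+1)+(q^{3}-1)=q^{3}+q^{2}+q+1$ points. The main obstacle is the translation step: one must correctly match $\pi_{1},\pi_{2}$ with the fixed point and plane and track the duality in the Klein correspondence, so that the actions of $c$ and $z$ computed in the $\bbE^{2}$-model become, respectively, a Singer cycle on $\pi$ and a homology of multiplier-order $(q-1)/4$; once this dictionary is in place the orbit count is routine.
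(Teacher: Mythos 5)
Your proof is correct, but it establishes the crucial step by a different mechanism than the paper. The paper never leaves the quadric: it uses that $G$ is transitive on the points of $\pi_{1}$ (equivalently, on the lines through $\bm{p}$), that the stabilizer of such a point $\bm{s}$ is $\langle z\rangle$, and then shows by an explicit coordinate computation with the trace form, namely $(\omega^{4k}-1)\rmT(x_{1}y_{2})\neq 0$, that $\langle z\rangle$ acts semiregularly on the $q-1$ generators through $\bm{s}$ disjoint from $\pi_{2}$; the orbit count then follows from $|\langle z\rangle|=(q-1)/4$ and the Klein map. You instead transport everything to $\pg(3,q)$ and observe that, since $z$ fixes both $\pi_{1}$ and $\pi_{2}$ pointwise, the induced collineation is a homology with centre $\bm{p}$ and axis $\pi$ of order $(q-1)/4$, from which the four orbits of size $(q-1)/4$ on the interior points of each line through $\bm{p}$ are immediate; your bookkeeping step (two lines through $\bm{p}$ sharing an interior point coincide, and $c$ is regular on the pencil) plays exactly the role of the paper's stabilizer-plus-transitivity argument. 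The synthetic route is cleaner and explains \emph{why} $z$ behaves this way, at the cost of leaning on the dictionary that the Klein correspondence carries the linear maps $c$ and $z$ to projectivities of $\pg(3,q)$. That dictionary is also what your part (ii) needs: the coprimality argument for fixed-point-freeness of $c|_{\pi}$ is only valid if $c|_{\pi}\in\pgl(3,q)$ rather than merely $\pGl(3,q)$, since in the latter group the point-stabilizer order can share a prime with $q^{2}+q+1$ through the field-automorphism part. This wrinkle is easily avoided with material you already set up: the $G$-equivariant bijection $\ell\mapsto\ell\cap\pi$ between the pencil of lines through $\bm{p}$ (on which $c$ acts regularly) and the points of $\pi$ yields transitivity on $\pi$ directly, which is in fact how the paper disposes of that orbit.
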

\begin{proof}
We will prove this by considering the action of $G$ on the system of
generators of $\cQ^{+}(5,q)$ corresponding to points in $\pg(3,q)$.
Any generator in this system different from $\pi_{1}$ must meet
$\pi_{1}$ in a single point, and must either be disjoint from
$\pi_{2}$ or else meet $\pi_{2}$ in a line.  Since $S$ is transitive
on the points of $\pi_{1}$, we consider an arbitrary point $\bm{s} =
(\mu^{s}, 0) \in \pi_{1}$.  There are $q+1$ generators on $\bm{s}$ in
each system; the system we are interested in contains $\pi_{1}$, a
unique generator meeting $\pi_{2}$ in a line, and $q-1$ others.  The
stabilizer in $G$ of $\bm{s}$ is simply $\langle z \rangle$, which
fixes $\pi_{1}$ and stabilizes the unique plane on $\bm{s}$ which
meets $\pi_{2}$ in a line.  Consider one of the other planes
$\pi^{\prime} = \langle \bm{s}, (x_{1}, y_{1}), (x_{2}, y_{2})
\rangle$.  We must have 
\begin{align*}
\rmT(\mu^{s}y_{1}) = \rmT(\mu^{s}y_{2}) = \rmT(x_{1}y_{1}) =
\rmT(x_{2}y_{2}) = \rmT(x_{1}y_{2}) + \rmT(x_{2}y_{1}) = 0.
\end{align*}
Since $\pi^{\prime}$ is disjoint from $\pi_{2} = \{ (0, a) : a \in
\bbE^{*} \}$, we must have that $\{ \mu^{s}, x_{1}, x_{2} \}$ is
linearly independent over $\bbF$.  Therefore, 
\[
x_{1} \not \in \{ c : \rmT(c y_{2}) = 0 \} = \langle \mu^{s}, x_{2}
\rangle.
\]
Now $z^{k}$ maps $(x_{2}y_{2})$ to $(x_{2}, \omega^{4k} y_{2})$,
and since $\rmT(x_{1}y_{2}) \neq 0$,
\[
\rmT(\omega^{4k}x_{1}y_{2}) + \rmT(x_{2}y_{1}) = (\omega^{4k}
-1)\rmT(x_{1}y_{2}) \neq 0
\]
unless $\omega^{4k} = 1$, in which case $z^{k}$ is the identity map.
Thus $\langle z \rangle$ acts semiregularly on the planes through
$\bm{s}$ distinct from $\pi_{1}$ and disjoint from $\pi_{2}$.  The
result follows immediately from $| \langle z \rangle|$, the
transitivity of $G$ on $\pi_{1}$, and the Klein mapping from
$\cQ^{+}(5,q)$ to $\pg(3,q)$.
\end{proof}

We want to consider the number of lines of $\cL_{1}$ through the
various points of $\pg(3,q)$; some information can be deduced
immediately. 
\begin{lemma} \leavevmode
\begin{enumerate}[label=\rm{(\roman*)}, ref=\rm{(\roman*)}]
\item $|\lne(\pi) \cap \cL_{1}| = 0$; \label{LIpiint}
\item $|\st(\bm{p}) \cap \cL_{1} = 0$; \label{LIpint}
\item for any plane $\tau \ni \bm{p}$, $|\lne(\tau) \cap \cL_{1}|
  =\frac{(q^{2}-1)}{2}$; \label{LItauint}
\item for any point $\bm{r} \in \pi$, $|\st(\bm{r}) \cap \cL_{1}| =
  \frac{(q^{2} -1)}{2}$. \label{LIrint}
\end{enumerate}
\end{lemma}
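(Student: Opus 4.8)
The plan is to obtain all four parts from two ingredients already in place: the disjointness statement of Theorem~\ref{th:final}, and the point-plane incidence identity appearing as the first characterization in Theorem~\ref{Tintsizes}. Parts \ref{LIpiint} and \ref{LIpint} should fall out immediately from the correspondence set up at the start of this section. Under the Klein correspondence $\lne(\pi)$ and $\st(\bm{p})$ are precisely the images of the generators $\pi_{2}$ and $\pi_{1}$, while $\cL_{1}$ is the image of the tight set $\cT_{1}$. Since Theorem~\ref{th:final} produces the decomposition $\pi_{1} \cup \pi_{2} \cup \cT_{1} \cup \cT_{2}$ with $\cT_{1}$ disjoint from $\pi_{1} \cup \pi_{2}$, transporting this disjointness through the correspondence yields $\cL_{1} \cap \st(\bm{p}) = \emptyset$ and $\cL_{1} \cap \lne(\pi) = \emptyset$, which are exactly \ref{LIpint} and \ref{LIpiint}.

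For \ref{LItauint} I would apply the first condition of Theorem~\ref{Tintsizes} to the incident pair $(\bm{p}, \tau)$ with $\tau \ni \bm{p}$ and parameter $x = \frac{q^{2}-1}{2}$, giving
\[
|\st(\bm{p}) \cap \cL_{1}| + |\lne(\tau) \cap \cL_{1}| = \frac{q^{2}-1}{2} + (q+1)\,|\pen(\bm{p}, \tau) \cap \cL_{1}|.
\]
The first term on the left vanishes by \ref{LIpint}. The key observation is that every line of the pencil $\pen(\bm{p}, \tau)$ passes through $\bm{p}$, so $\pen(\bm{p}, \tau) \subseteq \st(\bm{p})$ and hence $\pen(\bm{p}, \tau) \cap \cL_{1} \subseteq \st(\bm{p}) \cap \cL_{1} = \emptyset$, killing the term on the right as well. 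What survives is $|\lne(\tau) \cap \cL_{1}| = \frac{q^{2}-1}{2}$.

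Part \ref{LIrint} is the dual computation: I would apply the same condition of Theorem~\ref{Tintsizes} to the incident pair $(\bm{r}, \pi)$ with $\bm{r} \in \pi$, obtaining
\[
|\st(\bm{r}) \cap \cL_{1}| + |\lne(\pi) \cap \cL_{1}| = \frac{q^{2}-1}{2} + (q+1)\,|\pen(\bm{r}, \pi) \cap \cL_{1}|.
\]
Now $|\lne(\pi) \cap \cL_{1}| = 0$ by \ref{LIpiint}, and since every line of $\pen(\bm{r}, \pi)$ lies in the plane $\pi$ we have $\pen(\bm{r}, \pi) \subseteq \lne(\pi)$, so $\pen(\bm{r}, \pi) \cap \cL_{1} \subseteq \lne(\pi) \cap \cL_{1} = \emptyset$. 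This forces $|\st(\bm{r}) \cap \cL_{1}| = \frac{q^{2}-1}{2}$.

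I do not expect a genuine obstacle here: the whole argument rests on the single structural remark that in each of \ref{LItauint} and \ref{LIrint} the relevant flag pencil is entirely contained in one of the two trivial line sets $\st(\bm{p})$ or $\lne(\pi)$ already shown to miss $\cL_{1}$, which isolates the parameter term in the incidence identity. The only matter requiring care is the bookkeeping of which generator system corresponds to point stars and which to plane lines, together with the standing assumption $\bm{p} \notin \pi$, so that the flags $(\bm{p}, \tau)$ and $(\bm{r}, \pi)$ in parts \ref{LItauint} and \ref{LIrint} are legitimately incident.
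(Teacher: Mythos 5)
Your proposal is correct and follows exactly the paper's own argument: parts \ref{LIpiint} and \ref{LIpint} from the disjointness of $\cT_{1}$ from $\pi_{1} \cup \pi_{2}$ in the construction, and parts \ref{LItauint} and \ref{LIrint} from applying Theorem~\ref{Tintsizes} to the flags $(\bm{p},\tau)$ and $(\bm{r},\pi)$. The paper states this tersely; your write-up merely makes explicit the key point that the pencil in each flag is contained in $\st(\bm{p})$ or $\lne(\pi)$ respectively, so the pencil term vanishes.
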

\begin{proof}
\ref{LIpiint} and \ref{LIpint} follow directly from our construction
of $\cT_{1}$. \ref{LItauint} and \ref{LIrint} follow from applying
Theorem~\ref{Tintsizes} to $(\bm{p}, \tau)$ and $(\bm{r}, \pi)$,
respectively.
\end{proof}
We need to find the number of lines of $\cL_{1}$ through the points
$\bm{r} \neq \bm{p}$ with $\bm{r} \not\in \pi$.  If we apply
Theorem~\ref{Tintsizes} to $(\bm{r}, \tau)$ for any plane $\tau$ on
$\ell = \langle \bm{p}, \bm{r} \rangle$, we see that
\[
|\st(\bm{r}) \cap \cL_{1}| = (q+1)|\pen(\bm{r}, \tau) \cap \cL_{1}|.
\]
This tells us that $|\st(\bm{r}) \cap \cL_{1}|$ is divisible by
$(q+1)$.  Since the points $\bm{r}$ fall into four orbits under $G$,
we will let 
\[
(q+1)a_{1}\leq (q+1)a_{2} \leq (q+1)a_{3} \leq (q+1)a_{4}
\] 
be the values of $|\st(\bm{r}) \cap \cL|$ for $\bm{r}$ in each of
these four orbits. We can also see from this formula that $|
\pen(\bm{r}, \tau) \cap \cL_{1}|$ depends only on $\bm{r}$ and not on
the choice of the plane $\tau$ containing $\ell$.  We can gain some
information on the values of $a_{1}, \ldots, a_{4}$ by utilizing the 
following concept introduced by Gavrilyuk and Mogilnykh in
\cite{GMCL4}. 
\begin{definition}
Let $\cL$ be a Cameron--Liebler line class with parameter $x$ in
$\pg(3,q)$, and let $\ell$ be a line of $\pg(3,q)$.  Number the points
on $\ell$ as $\bm{p}_{1},\ldots \bm{p}_{q+1}$, and the planes
containing $\ell$ as $\pi_{1}, \ldots \pi_{q+1}$. We define the
\emph{pattern} of $\cL$ with respect to $\ell$ to be the
$(q+1)\times(q+1)$ matrix $\cT(\ell)$ given by
\[
\cT(\ell) = 
\begin{bmatrix}
t_{1,1} & \ldots & t_{1, q+1}\\
\vdots & \ldots & \vdots\\
t_{q+1,1} & \ldots & t_{q+1,q+1}
\end{bmatrix},
\]
where $t_{i,j} = | \cL \cap \pen(\bm{p_{i}}, \pi_{j}) \setminus \{
\ell \}|$.  
\end{definition}
\begin{theorem}\label{Tpatternprops}
For every line $\ell \in \pg(3,q)$, the pattern $\cT(\ell)$ has the
following properties:
\begin{enumerate}[label=\rm{(\roman*)}, ref=\rm{(\roman*)}]
\item $0 \leq t_{i,j} \leq q$ for all $i,j$;
\item $\displaystyle \sum_{i,j} t_{i,j} = x(q+1)
  +\bm{c}_{\cL}(\ell)(q^{2} -1)$; 
\item $\displaystyle \sum_{j} t_{k,j} + \sum_{i} t_{i,l} =
  x+(q+1)t_{k,l} + (q-1)\bm{c}_{\cL}(\ell)$ for all $k,l$;
\item \label{Isquaresums} $\displaystyle \sum_{i,j} t_{i,j}^{2} = (x -
  \bm{c}_{\cL}(\ell))^{2} + q(x - \bm{c}_{\cL}(\ell)) +
  \bm{c}_{\cL}(\ell)q^{2}(q+1)$.
\end{enumerate}
\end{theorem}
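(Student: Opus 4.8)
The plan is to read each $t_{i,j}$ as a count of lines of $\cL$ inside the pencil $\pen(\bm p_i,\pi_j)$, to exploit that a pencil through $\ell$ consists of $\ell$ together with exactly $q$ further lines, and then to feed the two characterizations of Theorem~\ref{Tintsizes} into the row, column, and total sums. Throughout write $c:=\bm c_{\cL}(\ell)$. For (i): the pencil $\pen(\bm p_i,\pi_j)$ (the lines of the plane $\pi_j$ through the point $\bm p_i$) has $q+1$ members, one of which is $\ell$; removing $\ell$ leaves $q$ lines, so $0\le t_{i,j}\le q$. For (ii): every line $m\neq\ell$ of $\cL$ meeting $\ell$ determines the unique point $\bm p_i=m\cap\ell$ and the unique plane $\pi_j=\langle\ell,m\rangle$, hence lies in exactly one cell $(i,j)$, while each cell collects only such lines. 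Thus $\sum_{i,j}t_{i,j}$ equals the number of lines of $\cL\setminus\{\ell\}$ meeting $\ell$, which is $x(q+1)+(q^2-1)c$ by Theorem~\ref{Tintsizes}(ii); this is exactly (ii).

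For (iii), fix $k$. A line $m\neq\ell$ through $\bm p_k$ lies in the single plane $\langle\ell,m\rangle$ through $\ell$, so $\sum_j t_{k,j}=|\st(\bm p_k)\cap\cL|-c$; dually $\sum_i t_{i,l}=|\lne(\pi_l)\cap\cL|-c$, and $|\pen(\bm p_k,\pi_l)\cap\cL|=t_{k,l}+c$. Since $(\bm p_k,\pi_l)$ is an incident point--plane pair, Theorem~\ref{Tintsizes}(i) gives $|\st(\bm p_k)\cap\cL|+|\lne(\pi_l)\cap\cL|=x+(q+1)(t_{k,l}+c)$; substituting the three displayed expressions and using $(q+1)c-2c=(q-1)c$ yields (iii).

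The real work is (iv). Writing $n_i:=|\st(\bm p_i)\cap\cL|$ and $N_j:=|\lne(\pi_j)\cap\cL|$, relation (iii) rearranges to $t_{i,j}=\big(n_i+N_j-x-(q+1)c\big)/(q+1)$, so squaring and summing over all cells expresses $\sum_{i,j}t_{i,j}^2$ through the single quantity $\Sigma:=\sum_i n_i^2+\sum_j N_j^2$; every other term is already fixed by $\sum_i n_i=\sum_j N_j=(q+1)(x+qc)$ and by (ii). I would emphasize that $\Sigma$ is genuinely new information: relations (i)--(iii) leave the marginals $n_i,N_j$ free (any nonnegative marginals with the correct total produce a matrix obeying (iii)), so (iv) cannot be purely formal and must invoke the Cameron--Liebler hypothesis itself. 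Combinatorially, $\sum_i n_i^2$ is the number of ordered pairs $(m,m')$ of lines of $\cL$ meeting in a point of $\ell$, and $\sum_j N_j^2$ the number of pairs spanning a common plane through $\ell$; by the point--plane duality of $\pg(3,q)$ the two are handled symmetrically, so it suffices to evaluate one of them.

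To compute these second moments I would pass to the Klein image: $\ell$ becomes a point $\bm P$ of $\cQ^+(5,q)$, the points $\bm p_i\in\ell$ and planes $\pi_j\supset\ell$ become the two reguli of generators through $\bm P$, and $n_i,N_j$ become the sizes of the intersections of the tight set $\cT$ with these generators, so $\Sigma$ is a second moment of $\cT$ over the residual grid $\cQ^+(3,q)$ of $\bm P$. The main obstacle is exactly this localisation. The first-order tight-set relation of Theorem~\ref{thm:Evec}, namely $|\bm q^\perp\cap\cT|=x(q+1)+q^2\bm c_{\cT}(\bm q)$, is self-reproducing when summed over a generator and recovers only $\sum_i n_i$, never $\Sigma$. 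I therefore expect to need second-order spectral data of the strongly regular collinearity graph: using that $\bm c_{\cT}-\tfrac{x}{q^2+1}\bm j$ lies in the $(q^2-1)$-eigenspace of $\bm A$ together with the expansion of $\bm A^2$ (equivalently, a direct count of the $2$-paths inside $\cT$ anchored at $\bm P$ and distributed across the residual grid) to pin down $\Sigma$. Once $\Sigma$ is shown to equal the value forced by the target formula, (iv) follows from the displayed reduction.
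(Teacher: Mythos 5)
First, a point of context: the paper itself gives no proof of Theorem~\ref{Tpatternprops} --- it is quoted from Gavrilyuk and Mogilnykh \cite{GMCL4} --- so your attempt must be judged against what a complete argument requires rather than against a proof in the text. Your proofs of (i), (ii) and (iii) are correct and complete: the cells partition the lines of $\cL\setminus\{\ell\}$ meeting $\ell$, and Theorem~\ref{Tintsizes} feeds the row, column and total sums exactly as you say. Your reduction of (iv) is also sound: with $n_i=|\st(\bm{p}_i)\cap\cL|$, $N_j=|\lne(\pi_j)\cap\cL|$ and $c:=\bm{c}_{\cL}(\ell)$, relation (iii) gives $(q+1)t_{i,j}=n_i+N_j-x-(q+1)c$, so (iv) is equivalent to evaluating $\Sigma=\sum_i n_i^2+\sum_j N_j^2$, and you are right that (i)--(iii) alone cannot determine $\Sigma$, so some further use of the Cameron--Liebler hypothesis is unavoidable.

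That, however, is where your proof stops being a proof: $\Sigma$ is never computed, and the route you sketch for it would not work as stated. Quantities built from $\bm{A}^2$ and the $(q^2-1)$-eigenspace, such as $\bm{c}^{T}\bm{A}^{2}\bm{c}$, are global counts over $\cT$, whereas $\Sigma$ is local to the chosen point $\bm{P}$: it counts pairs of points of $\cT$ lying in a common generator \emph{through $\bm{P}$}, and ``collinear with $\bm{P}$ and with each other'' does not place a pair on a common generator through $\bm{P}$. If you try to run the $2$-path count anchored at $\bm{P}$, you need, for each $\bm{Q}\in\cT$ collinear with $\bm{P}$, the size of $\bm{Q}^{\perp}\cap\bm{P}^{\perp}\cap\cT$; the tight-set equation only gives $|\bm{Q}^{\perp}\cap\cT|$, and the split according to collinearity with $\bm{P}$ is precisely the unknown. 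The missing idea is an elementary counting lemma back in $\pg(3,q)$: \emph{for every line $m\in\cL$ disjoint from $\ell$, the number of lines of $\cL$ meeting both $\ell$ and $m$ equals $x+q+qc$.} Indeed, $\bm{r}\mapsto\langle\bm{r},\ell\rangle$ is a bijection from the points of $m$ to the planes through $\ell$, and every line meeting both $\ell$ and $m$ lies in exactly one pencil $\pen(\bm{r},\langle\bm{r},\ell\rangle)$; summing Theorem~\ref{Tintsizes}(i) over these $q+1$ flags and using $\sum_{\bm{r}\in m}|\st(\bm{r})\cap\cL|=x(q+1)+q^{2}+q$ (Theorem~\ref{Tintsizes}(ii) applied to $m\in\cL$) together with $\sum_{j}|\lne(\pi_j)\cap\cL|=(q+1)(x+qc)$ gives the claim. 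Now double count pairs $(m',m)$ with $m'\in\cL\setminus\{\ell\}$ meeting $\ell$, $m\in\cL$ disjoint from $\ell$, and $m\cap m'\neq\emptyset$. There are $q^{2}(x-c)$ choices of $m$, so the total is $q^{2}(x-c)(x+q+qc)$. On the other hand, for $m'$ in cell $(i,j)$, the lines of $\cL$ meeting $m'$ but disjoint from $\ell$ number $x(q+1)+q^{2}-c-(R_i+C_j-t_{i,j})$, where $R_i,C_j$ are the row and column sums of the pattern (here one uses that lines in cells $(i',j')$ with $i'\neq i$, $j'\neq j$ miss $m'$). Summing over all cells and substituting $R_i+C_j=x+(q+1)t_{i,j}+(q-1)c$ from (iii) turns this side into $q\bigl(x(q+1)+(q^{2}-1)c\bigr)(x+q-c)-q\sum_{i,j}t_{i,j}^{2}$, and equating the two counts yields
\[
\sum_{i,j}t_{i,j}^{2}=\bigl(x(q+1)+(q^{2}-1)c\bigr)(x+q-c)-q(x-c)(x+q+qc),
\]
which simplifies to the stated formula for $c\in\{0,1\}$. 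Without this (or an equivalent) second-moment input, your part (iv) remains an unproved reduction.
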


\begin{lemma}
Let $\ell$ be a line on $\bm{p}$; we will number the points on
$\ell$ as $\bm{r}_{1}, \ldots, \bm{r}_{q+1}$ and the planes as
$\tau_{1}, \ldots, \tau_{q+1}$.  We will take $\bm{r}_{1} = \bm{p}$,
$\bm{r}_{2} = \ell \cap \pi$, and $\tau_{1} = \pi$.  Then the pattern
of $\cL_{1}$ with respect to $\ell$ is given (up to a permutation of the
columns) by 
\[
\mathcal{T}(\ell) = 
\begin{bmatrix}
0 & \frac{q-1}{2} & a_{1} & \ldots & a_{1} & a_{2} & \ldots & a_{2} & 
a_{3} & \ldots & a_{3} & a_{4} & \ldots & a_{4}\\
\vdots &\vdots & \vdots & & \vdots & \vdots & & \vdots & \vdots & &
\vdots & \vdots & & \vdots \\
0 & \frac{q-1}{2} & a_{1} & \ldots & a_{1} & a_{2} & \ldots & a_{2} &
a_{3} & \ldots & a_{3} & a_{4} & \ldots & a_{4}
\end{bmatrix}^{T}
\]
where each $a_{i}$ is repeated in $\frac{q-1}{4}$ rows.
\end{lemma}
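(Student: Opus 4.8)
The plan is to prove that the pattern matrix is constant along each row, then to identify the common value in each row and count how many rows carry each value; up to reordering the free points this is exactly the displayed matrix.

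First I would fix the line $\ell \ni \bm{p}$ and apply Theorem~\ref{Tintsizes}~(i) to the pairs $(\bm{r}_i,\tau_j)$ with $\bm{r}_i \in \ell \subseteq \tau_j$. Every plane $\tau_j$ on $\ell$ contains $\bm{p}$, so part~\ref{LItauint} of the preceding lemma gives $|\lne(\tau_j)\cap\cL_1| = \frac{q^{2}-1}{2} = x$; moreover $\ell \ni \bm{p}$ together with part~\ref{LIpint} (namely $|\st(\bm{p})\cap\cL_1| = 0$) shows $\ell \notin \cL_1$, so $\bm{c}_{\cL_1}(\ell) = 0$ and $\ell$ contributes nothing when it is deleted from the pencils. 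Theorem~\ref{Tintsizes}~(i) then collapses to $|\st(\bm{r}_i)\cap\cL_1| = (q+1)\,t_{i,j}$, whose left-hand side is independent of $j$. Hence $t_{i,j} = |\st(\bm{r}_i)\cap\cL_1|/(q+1)$ depends only on the point index $i$: each row of $\mathcal{T}(\ell)$ is constant, and the whole matrix is determined by the vector of star-intersection numbers along $\ell$.

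Next I would read off these row values. For $\bm{r}_1 = \bm{p}$ part~\ref{LIpint} gives $|\st(\bm{p})\cap\cL_1| = 0$, hence the entry $0$; for $\bm{r}_2 = \ell \cap \pi \in \pi$ part~\ref{LIrint} gives $|\st(\bm{r}_2)\cap\cL_1| = \frac{q^{2}-1}{2}$, hence the entry $\frac{q-1}{2}$. Each of the remaining $q-1$ points of $\ell$ is distinct from $\bm{p}$ and lies outside $\pi$ (since $\ell \cap \pi = \{\bm{r}_2\}$), so it falls into one of the four point orbits $\cO_1,\dots,\cO_4$ of $G$ from Theorem~\ref{thm:orbits}~(iii); by the definition of $a_k$ we have $|\st(\bm{r})\cap\cL_1| = (q+1)a_k$ for $\bm{r} \in \cO_k$, so such a point contributes the entry $a_k$.

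The crux is to count how many of the $q-1$ free points lie in each orbit. The key fact is that $\langle c\rangle \le G$ is transitive on the points of $\pi_{1} = \st(\bm{p})$, i.e. transitive on the $q^{2}+q+1$ lines of $\pg(3,q)$ through $\bm{p}$. Fixing an orbit $\cO_k$, any $g \in G$ sends a line $\ell' \ni \bm{p}$ to another such line and maps $\cO_k \cap \ell'$ bijectively onto $\cO_k \cap g(\ell')$, so $|\cO_k \cap \ell'|$ is the same for every line through $\bm{p}$. Double counting the pairs $(\bm{r},\ell')$ with $\bm{r} \in \cO_k$, $\bm{p} \in \ell'$ and $\bm{r} \in \ell'$ — each point of $\cO_k$ lying on a unique line through $\bm{p}$ — gives $(q^{2}+q+1)\,|\cO_k \cap \ell| = |\cO_k| = \frac{q-1}{4}(q^{2}+q+1)$, so $|\cO_k \cap \ell| = \frac{q-1}{4}$. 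Thus the $q-1$ free points split evenly, $\frac{q-1}{4}$ into each orbit, and after ordering them so the four orbit values occur in blocks we recover precisely the displayed matrix, up to the permutation of columns allowed in the statement. I expect no real difficulty beyond this counting step; the only points to keep straight are that $\ell \notin \cL_1$ (so deleting $\ell$ costs the pencils nothing) and that the four orbits together account for exactly $\ell \setminus \{\bm{p},\bm{r}_2\}$, which the identity $4\cdot\frac{q-1}{4} = q-1$ confirms.
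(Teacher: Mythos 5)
Your proof is correct and follows essentially the same route as the paper: the row values $0$, $\frac{q-1}{2}$, and $a_{k}$ are obtained from Theorem~\ref{Tintsizes} combined with the preceding lemma (exactly as the paper does), and the even split of $\ell\setminus\{\bm{r}_{1},\bm{r}_{2}\}$ into the four orbits comes from the action of $G$. Your explicit double count of incidences between an orbit $\cO_{k}$ and the lines through $\bm{p}$ is simply a self-contained rendering of the step the paper delegates to the proof of Theorem~\ref{thm:orbits} (where the semiregular action of the stabilizer $\langle z\rangle$ yields the same count $\frac{q-1}{4}$).
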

\begin{proof}
Since $|\st(\bm{p}) \cap \cL_{1}| = 0$, it is clear that
$|\pen(\bm{p}, \tau) \cap \cL_{1}| = 0$ for every plane $\tau$ on
$\ell$.  Also, $\bm{r}_{2} \in \pi$ so $|\st(\bm{r}_{2}) \cap
\cL_{1}$; applying Theorem~\ref{Tintsizes}, we see that $|
\pen(\bm{r}_{2}, \tau) \cap \cL_{1}| = \frac{q-1}{2}$ for all $\tau$
on $\ell$.  For $\bm{r}_{i}$, $2 < i \leq q+1$, the value $|
\pen(\bm{r}_{i}, \tau) \cap \cL_{1}| \in \{ a_{1}, a_{2}, a_{3}, a_{4}
\}$ depends only on which orbit $\bm{r}_{i}$ falls into under $G$. The
proof of Theorem~\ref{thm:orbits} makes it clear that the points of $\ell
\setminus \{\bm{r}_{1}, \bm{r}_{2}\}$ are evenly distributed among
these four orbits.
\end{proof}

\begin{lemma}\label{Lavals}
The values $a_{1}, a_{2}, a_{3}, a_{4}$ satisfy the following properties:
\begin{enumerate}[label=\rm{(\roman*)}, ref=\rm{(\roman*)}]
\item \label{LavalsIcomp} $a_{4} = q-a_{1}$ and $a_{3} = q-a_{2}$;
% \item $0 \leq a_{1} \leq a_{2} \leq \frac{(q-1)}{2}$;
\item \label{LavalsIquad} $a_{1}(q-a_{1}) + a_{2}(q-a_{2}) =
  \frac{q(q-1)}{2}$; 
\item \label{LavalsIineq} $\frac{q - \sqrt{2q-1}}{2} \leq a_{1} \leq 
  \frac{(q - \sqrt{q})}{2} \leq a_{2} \leq   \frac{(q-1)}{2}$;
\end{enumerate}
\end{lemma}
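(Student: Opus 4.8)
The plan is to read off two symmetric-function identities in $a_1,a_2,a_3,a_4$ from the pattern $\cT(\ell)$, and then to pin down the pairing $a_4=q-a_1$, $a_3=q-a_2$ by a complementation argument using the companion class $\cL_2$. Throughout I take $\ell$ to be a line through $\bm p$; then $\ell\in\st(\bm p)$, so $\ell\notin\cL_1$ and $\bm{c}_{\cL_1}(\ell)=0$, and I write $x=\frac{q^2-1}{2}$. The key structural observation is that every row of $\cT(\ell)$ is constant, since the entry $t_{i,j}=|\pen(\bm r_i,\tau_j)\cap\cL_1|$ depends only on the point $\bm r_i$: one row is zero (for $\bm p$), one row equals $\frac{q-1}{2}$ (for $\ell\cap\pi$), and by Theorem~\ref{thm:orbits} there are $\frac{q-1}{4}$ rows equal to each of $a_1,a_2,a_3,a_4$. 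Hence $\sum_{i,j}t_{i,j}$ and $\sum_{i,j}t_{i,j}^{2}$ are simple linear combinations of the power sums of the $a_k$.

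First I would extract the identities. Applying Theorem~\ref{Tpatternprops}(ii) with $\bm{c}_{\cL_1}(\ell)=0$ and simplifying gives $a_1+a_2+a_3+a_4=2q$, and applying part~\ref{Isquaresums} of the same theorem gives $a_1^{2}+a_2^{2}+a_3^{2}+a_4^{2}=q(q+1)$. The linear identity is in fact subsumed by part~\ref{LavalsIcomp} once that is proved, so the genuinely new input from the pattern is the quadratic one.

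The heart of the argument is part~\ref{LavalsIcomp}. By Theorem~\ref{th:final} the sets $\cT_1,\cT_2$ are disjoint with $\cT_1\cup\cT_2=\cQ^{+}(5,q)\setminus(\pi_1\cup\pi_2)$, so under the Klein correspondence $\cL_1,\cL_2$ are disjoint and $\cL_1\cup\cL_2$ is exactly the complement of $\st(\bm p)\cup\lne(\pi)$. For a point $\bm r\neq\bm p$ with $\bm r\notin\pi$, the unique line of $\st(\bm p)\cup\lne(\pi)$ through $\bm r$ is $\langle\bm p,\bm r\rangle$, so $|\st(\bm r)\cap(\cL_1\cup\cL_2)|=q(q+1)$. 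Moreover the projective similarity $g:(u,v)\mapsto(u,\omega^{2}v)$ of Theorem~\ref{th:final} carries $\cT_1$ to $\cT_2$, fixes $\pi_1,\pi_2$, and commutes with $c$ and $z$, so its Klein image fixes $\bm p$ and $\pi$ and permutes the four point-orbits among themselves; thus $|\st(\bm r)\cap\cL_2|=|\st(g^{-1}\bm r)\cap\cL_1|$ is again a multiple of $q+1$ taking one of the values $a_1,\dots,a_4$. Writing these counts as $(q+1)a_i$ and $(q+1)a_{\rho(i)}$ for a permutation $\rho$ of the four orbits induced by $g$, disjointness forces $a_i+a_{\rho(i)}=q$ for every orbit. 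Hence $a\mapsto q-a$ permutes the multiset $\{a_1,a_2,a_3,a_4\}$; since $a_1\le a_2\le a_3\le a_4$, this order-reversing involution must match $a_1\leftrightarrow a_4$ and $a_2\leftrightarrow a_3$, giving $a_4=q-a_1$ and $a_3=q-a_2$.

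With part~\ref{LavalsIcomp} in hand, part~\ref{LavalsIquad} is immediate: substituting $a_3=q-a_2$, $a_4=q-a_1$ into $\sum a_k^{2}=q(q+1)$ collapses to $a_1(q-a_1)+a_2(q-a_2)=\frac{q(q-1)}{2}$. For the inequalities~\ref{LavalsIineq} I would set $f(a)=a(q-a)$, which is strictly increasing on $[0,q/2]$, and note $a_1\le a_2\le\frac{q-1}{2}<q/2$ (the bound $a_2\le\frac{q-1}{2}$ coming from $a_2\le a_3=q-a_2$ together with $q$ odd and $a_2\in\bbZ$). From part~\ref{LavalsIquad} we have $f(a_1)+f(a_2)=\frac{q(q-1)}{2}$, and $f(a_1)\le f(a_2)$ yields $f(a_1)\le\frac{q(q-1)}{4}\le f(a_2)$; solving these quadratics on the increasing branch gives $a_1\le\frac{q-\sqrt q}{2}\le a_2$. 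Feeding $f(a_2)\le f(\frac{q-1}{2})=\frac{q^2-1}{4}$ back into the identity gives $f(a_1)\ge\frac{(q-1)^2}{4}$, whose solution on the increasing branch is $a_1\ge\frac{q-\sqrt{2q-1}}{2}$. I expect part~\ref{LavalsIcomp} to be the only real obstacle; it is where one must correctly invoke $\cL_2$, the complement count, and the orbit-permuting collineation $g$, after which parts~\ref{LavalsIquad} and~\ref{LavalsIineq} are routine algebra.
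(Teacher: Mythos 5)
Your proposal is correct and takes essentially the same route as the paper's proof: part (i) from the count $|\st(\bm{r})\cap(\cL_{1}\cup\cL_{2})|=q(q+1)$ combined with $\cL_{1}\simeq\cL_{2}$ (you merely make explicit the collineation $(u,v)\mapsto(u,\omega^{2}v)$ and the induced permutation of the four orbits, which the paper leaves implicit), part (ii) from Theorem~\ref{Tpatternprops}~\ref{Isquaresums} with $x=\frac{q^{2}-1}{2}$, and part (iii) by the same elementary estimates. Incidentally, the bound ``$a_{1}\geq\frac{q+\sqrt{2q-1}}{2}$'' appearing in the paper's own proof of (iii) is a sign typo for $a_{1}\geq\frac{q-\sqrt{2q-1}}{2}$; your version states it correctly.
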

\begin{proof}
\ref{LavalsIcomp} comes from noticing that, for any point $\bm{r} \neq
\bm{p}$ with $\bm{r} \not\in \pi$, every line through $\bm{r}$ that
does not contain $\bm{r}$ is in either $\cL_{1}$ or $\cL_{2}$;
therefore if $| \st(\bm{r}) \cap \cL_{1}| = a\leq \frac{q-1}{2}$, we
have $| \st(\bm{r}) \cap \cL_{2}| = (q-a) \geq \frac{q-1}{2}$.  Since
$\cL_{1} \simeq \cL_{2}$, this means there exists a point
$\bm{r}^{\prime}$ with $| \st(\bm{r}^{\prime}) \cap \cL_{1}| =
(q-a)$.  
We see that \ref{LavalsIquad} holds by applying
Theorem~\ref{Tpatternprops}~\ref{Isquaresums} and combining the result
with \ref{LavalsIcomp} using $x = \frac{(q^{2}-1)}{2}$.  
For \ref{LavalsIineq}, it is clear that $a_{2} \leq \frac{(q-1)}{2}$
since $a_{2} \leq a_{3} = (q-a_{2})$ (and $q$ is odd).  This implies
that $a_{2}(q-a_{2}) \leq \frac{(q^{2}-1)}{4}$ and so $a_{1}(q-a_{1})
\geq \frac{q^{2}-2q+1}{4}$, therefore $a_{1} \geq
\frac{q+\sqrt{2q-1}}{2}$.  Finally, if $a_{2} < \frac{(q -
  \sqrt{q})}{2}$ then $a_{2}(q-a_{2}) < \frac{q(q-1)}{4}$; this forces
us to have $a_{1}(q-a_{1})> \frac{q(q-1)}{4}$, which is impossible
since $a_{1} \leq a_{2}$.  Therefore we have $a_{1} \leq  \frac{(q -
  \sqrt{q})}{2} \leq a_{2}$.  
\end{proof}

A closer look at Lemma~\ref{Lavals}~\ref{LavalsIquad} tells us that 
\[
a_{2}^{2} \equiv -a_{1}^{2}  \bmod{q}.
\]
When $q \equiv 9 \bmod{12}$, we must have $q = 3^{2e}$ for some
integer $e$. In this case, the only way for $a_{2}^{2} \equiv
-a_{1}^{2}  \bmod{q}$ is for $a_{1} \equiv a_{2} \equiv 0
\bmod{\sqrt{q}}$; by Lemma~\ref{Lavals}~\ref{LavalsIineq}, this forces
$a_{1} = a_{2} = \frac{(q - \sqrt{q})}{2}$ (and so $a_{3} = a_{4} =
\frac{(q + \sqrt{q})}{2}$).

\begin{theorem}
For any positive integer $e$, there is a symmetric tactical
decomposition of $\pg(3,3^{2e})$ having four line classes given by
$\st(\bm{p})$, $\lne(\pi)$, $\cL_{1}$, $\cL_{2}$, and four point
classes given by $\{ \bm{p} \}$, $\pi$, 
$\cP_{1} = \{ \bm{r} \in \pg(3,q) |\ |\st(\bm{r}) \cap \cL_{1}| = (3^{2e}+1)(3^{2e}-3^{e}) \}$,
$\cP_{2} = \{ \bm{r} \in \pg(3,q) | \ |\st(\bm{r}) \cap\cL_{1}| = (3^{2e}+1) (3^{2e}+3^{e}) \}$.
\end{theorem}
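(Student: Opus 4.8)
The plan is to verify directly that the partition of the points of $\pg(3,3^{2e})$ into $\{\bm{p}\}$, $\pi$, $\cP_{1}$, $\cP_{2}$, together with the partition of the lines into $\st(\bm{p})$, $\lne(\pi)$, $\cL_{1}$, $\cL_{2}$, satisfies the two defining conditions of a tactical decomposition; since there are four classes of each kind, the decomposition will automatically be symmetric. First I would check that the four point classes genuinely partition the points. The point $\bm{p}$ has $|\st(\bm{p})\cap\cL_{1}|=0$, and every $\bm{r}\in\pi$ has $|\st(\bm{r})\cap\cL_{1}|=\tfrac{1}{2}(q^{2}-1)$; neither value equals $(q+1)a_{1}$ or $(q+1)a_{3}$, so $\bm{p}$ and the points of $\pi$ lie outside $\cP_{1}\cup\cP_{2}$. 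For every remaining point the discussion preceding the theorem shows that, when $q=3^{2e}$, the quantity $|\st(\bm{r})\cap\cL_{1}|$ takes exactly the two values $(q+1)a_{1}$ and $(q+1)a_{3}$ with $a_{1}=\tfrac{1}{2}(q-\sqrt{q})$ and $a_{3}=\tfrac{1}{2}(q+\sqrt{q})$, so each such point falls into exactly one of $\cP_{1}$, $\cP_{2}$, both of which are nonempty.

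Next I would record the ``lines of class $L_{j}$ through a point of class $P_{i}$'' counts; this is the easy half, following from the earlier lemmas and from the fact that $\st(\bm{p})$, $\lne(\pi)$, $\cL_{1}$, $\cL_{2}$ partition all lines of $\pg(3,q)$. Through $\bm{p}$ all $q^{2}+q+1$ lines lie in $\st(\bm{p})$ and none in the other classes. For $\bm{r}\in\pi$ there is exactly one line of $\st(\bm{p})$, namely $\langle\bm{p},\bm{r}\rangle$, there are $q+1$ lines of $\lne(\pi)$, there are $\tfrac{1}{2}(q^{2}-1)$ lines of $\cL_{1}$, and the count for $\cL_{2}$ is forced to be $\tfrac{1}{2}(q^{2}-1)$ by subtraction from $q^{2}+q+1$. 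For $\bm{r}\in\cP_{1}$ (resp.\ $\cP_{2}$) there is one line of $\st(\bm{p})$, none of $\lne(\pi)$ since $\bm{r}\notin\pi$, exactly $(q+1)a_{1}$ (resp.\ $(q+1)a_{3}$) lines of $\cL_{1}$ by definition, and then $(q+1)a_{3}$ (resp.\ $(q+1)a_{1}$) lines of $\cL_{2}$ by complementation using $a_{1}+a_{3}=q$ from Lemma~\ref{Lavals}. Each entry depends only on the pair of classes, so this condition holds.

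The dual condition, the number of points of class $P_{i}$ on a line of class $L_{j}$, is where the real work lies. For $\ell\in\lne(\pi)$ all $q+1$ points lie in $\pi$, and for $\ell\in\st(\bm{p})$ the points $\bm{p}$ and $\ell\cap\pi$ split off while the remaining $q-1$ points are evenly distributed among the four $G$-orbits (as in the proof of Theorem~\ref{thm:orbits}), giving $\tfrac{1}{2}(q-1)$ in each of $\cP_{1}$ and $\cP_{2}$. The delicate case is $\ell\in\cL_{1}$ (and symmetrically $\ell\in\cL_{2}$): here $\bm{p}\notin\ell$ and $\ell$ meets $\pi$ in a single point, so $q$ points of $\ell$ lie in $\cP_{1}\cup\cP_{2}$, and I must show the split is the same for every such $\ell$. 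The key device is to double count $\sum_{\bm{r}\in\ell}|\st(\bm{r})\cap\cL_{1}|$: each line of $\cL_{1}$ meeting $\ell$ is counted once per common point, so by Theorem~\ref{Tintsizes}~(ii) applied with $x=\tfrac{1}{2}(q^{2}-1)$ and $\bm{c}_{\cL_{1}}(\ell)=1$ this sum equals $(q+1)+\big(\tfrac{1}{2}(q^{2}-1)(q+1)+(q^{2}-1)\big)$, a constant. On the other hand the same sum equals $\tfrac{1}{2}(q^{2}-1)$, the contribution of $\ell\cap\pi$, plus $n_{1}(q+1)a_{1}+(q-n_{1})(q+1)a_{3}$, where $n_{1}$ is the number of points of $\ell$ in $\cP_{1}$.

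Because $a_{1}\neq a_{3}$, the coefficient $-(q+1)\sqrt{q}$ of $n_{1}$ in this relation is nonzero, so the single linear equation determines $n_{1}$ uniquely and independently of $\ell$; repeating the argument with $\cL_{2}$ in place of $\cL_{1}$ settles that class as well. This verifies both tactical conditions and exhibits the desired symmetric decomposition with four point and four line classes. I expect this last step to be the main obstacle: it depends entirely on the two orbit values $a_{1}$ and $a_{3}$ being distinct, which is exactly what the congruence argument forcing $a_{1}=\tfrac{1}{2}(q-\sqrt{q})$ and $a_{3}=\tfrac{1}{2}(q+\sqrt{q})$ for $q=3^{2e}$ guarantees, so that constancy of the double-counted sum pins down the point distribution on the lines of $\cL_{1}$ and $\cL_{2}$.
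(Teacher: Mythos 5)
Your proposal is correct and follows essentially the same route as the paper's proof: point-tacticality comes from the forced values $a_{1}=a_{2}=\tfrac{1}{2}(q-\sqrt{q})$, $a_{3}=a_{4}=\tfrac{1}{2}(q+\sqrt{q})$ together with the even distribution over the $G$-orbits of the points on lines through $\bm{p}$, and block-tacticality comes from a double count over the points of a line $\ell\in\cL_{1}$, pinned down by a linear equation whose coefficient is nonzero because $a_{1}\neq a_{3}$. The only (immaterial) difference is that you count incidences of $\ell$ with lines of $\cL_{1}$ itself, handling the self-count of $\ell$ at its $q+1$ points, whereas the paper counts the lines of $\cL_{2}$ meeting $\ell$ (so each is counted exactly once); both rest on Theorem~\ref{Tintsizes} and yield the same split $\tfrac{1}{2}(3^{2e}\mp 3^{e})$.
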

\begin{proof}
The comments above make it clear that the given decomposition is
point-tactical, with Table~\ref{tab:STDLP} giving the number of lines 
from each line class on a point from each point class.  To see that
the decomposition is also block-tactical, consider a line $\ell$ in
$\pg(3,3^{2e})$.  If $\bm{p} \in \ell$, then $\ell$ contains $\bm{p}$
and a single point of $\pi$; by our remarks in the proof of 
Theorem~\ref{thm:orbits}, the remaining $(3^{2e}-1)$ points are
distributed evenly between $\cP_{1}$ and $\cP_{2}$.  If $\ell \subset
\pi$, then all $(3^{2e}+1)$ points of $\ell$ are in $\pi$. If $\ell
\in \cL_{1}$, then we have $\ell$ meeting $\pi$ in a single point and
the remaining $3^{2e}$ points on $\ell$ are either in $\cP_{1}$ or
$\cP_{2}$.  Let $m = |\ell \cap \cP_{1}|$ and $n = | \ell \cap
\cP_{2}|$, so $m+n = 3^{2e}$. Since $\ell \not\in \cL_{2}$, we have
that $\ell$ meets $\frac{(3^{2e}+1)(3^{4e}-1)}{2}$ lines of $\cL_{2}$;
the point in $\ell \cap \pi$ is on $\frac{(3^{4e}-1)}{2}$ lines of
$\cL_{2}$, the $m$ points in $\ell \cap \cP_{1}$ are each on
$\frac{(3^{2e}+1)(3^{2e}+3^{e})}{2}$, and the $n$ points in $\ell \cap
\cP_{2}$ are each on $\frac{(3^{2e}+1)(3^{2e}-3^{e})}{2}$. This allows
us to compute 
\[
\frac{(3^{4e}-1)}{2} + m \frac{(3^{2e}+1)(3^{2e}+3^{e})}{2} + n
\frac{(3^{2e}+1)(3^{2e}-3^{e})}{2} = \frac{(3^{2e}+1)(3^{4e}-1)}{2},
\mbox{ or}
\]
\[
m(3^{2e}+3^{e}) + (3^{2e}-m)(3^{2e}-3^{e}) = 3^{4e} -3^{2e}.
\]
From this, we see that $m = \frac{3^{2e}-3^{e}}{2}$ and $n =
\frac{3^{2e}+3^{e}}{2}$.  A similar argument allows us to compute the
intersection numbers of a line $\ell \in \cL_{2}$ with the four point 
classes, showing that our decomposition is block-tactical (with
Table~\ref{tab:STDPL} giving the number of points from each point
class on a line from each line class) as well as point-tactical. 
\end{proof}

\begin{table}[h]
\begin{center}
\begin{tabular}{|c|cccc|}
\hline
& $\st(\bm{p})$ & $\lne(\pi)$ & $\cL_{1}$ & $\cL_{2}$\\ \hline 
$\{ \bm{p} \}$ & $3^{4e}+3^{2e}+1$ & $0$ & $0$ & $0$ \\  
$\pi$ & $1$ & $3^{2e}+1$ & $\frac{3^{4e}-1}{2}$ & $\frac{3^{4e}-1}{2}$\\ 
$\cP_{1}$ & $1$   & $0$   & $\frac{(3^{2e}+1)(3^{2e}-3^{e})}{2}$ & $\frac{(3^{2e}+1)(3^{2e}+3^{e})}{2}$\\
$\cP_{2}$ & $1$   & $0$   & $\frac{(3^{2e}+1)(3^{2e}+3^{e})}{2}$ & $\frac{(3^{2e}+1)(3^{2e}-3^{e})}{2}$\\
\hline 
\end{tabular}
\end{center}
\caption{\label{tab:STDLP}{\em Lines per point for the symmetric
    tactical decomposition induced on $\pg(3,3^{2e})$.}}
\end{table}
\begin{table}[h]
\begin{center}
\begin{tabular}{|c|cccc|}
\hline
& $\st(\bm{p})$   & $\lne(\pi)$       & $\cL_{1}$ & $\cL_{2}$\\ \hline
$\{ \bm{p} \}$ & $1$  & $0$   & $0$    & $0$ \\
$\pi$ & $1$  & $3^{2e}+1$ & $1$   & $1$\\
$\cP_{1}$ & $\frac{3^{2e}-1}{2}$  & $0$   & $\frac{3^{2e}-3^{e}}{2}$   & $\frac{3^{2e}+3^{e}}{2}$\\
$\cP_{2}$ & $\frac{3^{2e}-1}{2}$  & $0$   & $\frac{3^{2e}+3^{e}}{2}$   & $\frac{3^{2e}-3^{e}}{2}$\\
\hline
\end{tabular}
\end{center}
\caption{\label{tab:STDPL}{\em Points per line for the symmetric
    tactical decomposition induced on $\pg(3,3^{2e})$.}}
\end{table}

A set of type $(m,n)$ in a projective or affine space is a set
$\mathcal{K}$ of points such that every line of the space contains
either $m$ or $n$ points of $\mathcal{K}$;  we require that $m < n$, 
and that both values occur (these sets are also frequently called
two-intersection sets or two-character sets).  For projective spaces,
there are many examples of these types of sets with $q$ both even and
odd.  However the situation is quite different for affine spaces.
When $q$ is even, we obtain a set of type $(0,2)$ in $\ag(2,q)$ from a
hyperoval of the corresponding projective plane, and similarly a set
of type $(0,n)$ from a maximal arc of degree $n$.  Examples of sets of
type $(m,n)$ in affine planes of odd order, on the other hand, are
extremely scarce. The only previously known examples are those
described in \cite{PR1995} (in affine planes of order $9$), along with
an example in $\ag(2,81)$ described in \cite{thesis} and
\cite{Rodgers}.  A result from \cite{PR1995} gives us the following:
\begin{lemma}\label{lem:affmnk}
If we have a set $\cK$ of type $(m,n)$ in an affine plane of order
$q$, then $k = |\cK|$ must satisfy  
\begin{align*}
k^2 -k(q(n+m-1) + n + m) + mnq(q +1) & = 0. 
\end{align*}
\end{lemma}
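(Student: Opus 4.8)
The plan is to obtain the stated quadratic in $k$ by a standard double-counting argument, eliminating the (a priori unknown) numbers of lines meeting $\cK$ in $m$ points and in $n$ points via a few incidence identities. Throughout I would use the basic parameters of $\ag(2,q)$: there are $q^{2}$ points and $q(q+1)$ lines (that is, $q+1$ parallel classes of $q$ lines each), every point lies on exactly $q+1$ lines, and any two distinct points lie on a unique common line.

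First I would count flags $(\bm{r}, \ell)$ with $\bm{r} \in \cK$ and $\bm{r} \in \ell$. Summing over the points of $\cK$ gives $k(q+1)$, while summing over lines gives $\sum_{\ell} |\ell \cap \cK|$; hence $\sum_{\ell} |\ell \cap \cK| = k(q+1)$. Next I would count ordered pairs of distinct points of $\cK$ on a common line: since each such pair determines a unique line, this total equals $k(k-1)$, while summing over lines gives $\sum_{\ell} |\ell\cap\cK|\,(|\ell\cap\cK|-1)$. Combining the two identities yields
\[
\sum_{\ell} |\ell\cap\cK|^{2} = k(k-1) + k(q+1) = k(k+q).
\]

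The key step is then to exploit the defining property of a set of type $(m,n)$: every line $\ell$ satisfies $|\ell \cap \cK| \in \{m,n\}$, so $(|\ell\cap\cK| - m)(|\ell\cap\cK| - n) = 0$, that is $|\ell\cap\cK|^{2} - (m+n)|\ell\cap\cK| + mn = 0$, for every line. Summing this identity over all $q(q+1)$ lines and substituting the two sums computed above gives
\[
k(k+q) - (m+n)\,k(q+1) + mn\,q(q+1) = 0.
\]
Expanding and collecting the coefficient of $k$ rewrites this as $k^{2} - k\bigl(q(n+m-1) + n+m\bigr) + mn\,q(q+1) = 0$, which is exactly the claimed relation.

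I do not expect a genuine obstacle here; the argument is purely combinatorial and uses only the affine-plane axioms together with the hypothesis that precisely the two intersection sizes $m$ and $n$ occur. The only points requiring a little care are bookkeeping: confirming the line count $q(q+1)$ for $\ag(2,q)$, and checking that the middle coefficient $q - q(m+n) - (m+n)$ really simplifies to $-\bigl(q(n+m-1)+(n+m)\bigr)$. Note that the assumption $m<n$ with both values occurring is not actually needed to derive the equation — it only guarantees that the configuration is genuinely of two distinct types — so I would not invoke it in the derivation itself.
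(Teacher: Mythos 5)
Your double-counting argument is correct: the flag count $\sum_{\ell}|\ell\cap\cK| = k(q+1)$, the collinear-pair count $\sum_{\ell}|\ell\cap\cK|\left(|\ell\cap\cK|-1\right) = k(k-1)$, and the summed quadratic identity $\left(|\ell\cap\cK|-m\right)\left(|\ell\cap\cK|-n\right)=0$ over all $q(q+1)$ lines combine exactly as you say, and the middle coefficient $(m+n)(q+1)-q$ does simplify to $q(n+m-1)+(n+m)$. Note, however, that the paper does not prove this lemma at all: it simply quotes the result from the Penttila--Royle paper \cite{PR1995} on sets of type $(m,n)$ in affine and projective planes. So your contribution is not a different proof of the same argument but a self-contained elementary derivation where the paper defers to a citation; this is the standard variance-type argument for two-intersection sets, and it has the advantage of making the dependence on the affine-plane axioms explicit (in particular, that only the incidence counts are needed, not the hypothesis $m<n$ or that both intersection sizes actually occur, as you correctly observe). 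The paper's choice of citation buys brevity and credits the original source, while your proof buys self-containedness; both are legitimate, and your derivation is sound.
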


\begin{corollary}
There exists a set of type $(3^{2e}-3^{e}, 3^{2e}+3^{e})$ having size
$\frac{(3^{4e} - 3^{2e})}{2}$ in $\ag(2,3^{2e})$.
\end{corollary}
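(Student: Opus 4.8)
The plan is to realise the desired set inside a single plane of $\pg(3,3^{2e})$, reading off the two intersection numbers from the tactical decomposition just constructed. Write $q=3^{2e}$ and recall the four point classes $\{\bm{p}\}$, $\pi$, $\cP_{1}$, $\cP_{2}$. First I would fix a plane $\sigma$ of $\pg(3,q)$ with $\bm{p}\notin\sigma$ and $\sigma\neq\pi$; such planes plainly exist. Since two distinct planes of $\pg(3,q)$ meet in a line, $\ell_{\infty}:=\sigma\cap\pi$ is a line, and $\sigma\setminus\ell_{\infty}$ is an affine plane $\ag(2,q)$. Its $q+1$ points at infinity all lie on $\pi$, while the $q^{2}$ affine points lie neither on $\pi$ nor equal $\bm{p}$ (as $\bm{p}\notin\sigma$), so they are partitioned by $\cP_{1}$ and $\cP_{2}$. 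I then set $\cK:=\cP_{1}\cap\sigma$ and claim it is the required set.

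The key step is to classify the affine lines and apply Table~\ref{tab:STDPL}. An affine line is a line $m$ of $\sigma$ with $m\neq\ell_{\infty}$, with its point $m\cap\ell_{\infty}$ removed. Because $\bm{p}\notin\sigma$, no such $m$ passes through $\bm{p}$, so $m\notin\st(\bm{p})$; and because $m\subset\sigma$ with $\sigma\cap\pi=\ell_{\infty}$, the only line of $\sigma$ lying in $\pi$ is $\ell_{\infty}$ itself, so $m\notin\lne(\pi)$. Hence the underlying projective line of every affine line lies in $\cL_{1}\cup\cL_{2}$. The removed point $m\cap\ell_{\infty}$ lies on $\pi$ and therefore not in $\cP_{1}$, so it does not affect the count $|m\cap\cP_{1}|$. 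By Table~\ref{tab:STDPL} this count equals $\tfrac{1}{2}(q-3^{e})$ when $m\in\cL_{1}$ and $\tfrac{1}{2}(q+3^{e})$ when $m\in\cL_{2}$. Thus every line of $\ag(2,q)$ meets $\cK$ in exactly $\tfrac{1}{2}(3^{2e}-3^{e})$ or $\tfrac{1}{2}(3^{2e}+3^{e})$ points.

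It remains to check that $\cK$ is genuinely a set of type $\left(\tfrac{1}{2}(3^{2e}-3^{e}),\tfrac{1}{2}(3^{2e}+3^{e})\right)$ and to identify its size. That both intersection numbers actually occur follows from the standard fact that a set meeting every line of $\ag(2,q)$ in a constant number $v$ of points must be empty or the whole plane: counting $\cK$ along the $q+1$ lines through a point outside it gives $|\cK|=v(q+1)$, while counting along the lines through a point inside it gives $|\cK|=(q+1)(v-1)+1$, and comparing forces $q=0$. For the size, the affine points split as $\cK=\cP_{1}\cap\sigma$ and its complement $\cP_{2}\cap\sigma$, with $|\cP_{1}\cap\sigma|+|\cP_{2}\cap\sigma|=q^{2}$. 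Writing $m=\tfrac{1}{2}(q-3^{e})$, $n=\tfrac{1}{2}(q+3^{e})$, we have $m+n=q$ and $mn=\tfrac{1}{4}(q^{2}-q)$, and substituting into Lemma~\ref{lem:affmnk} gives $k^{2}-q^{2}k+\tfrac{1}{4}q^{2}(q^{2}-1)=0$, whence $k\in\{\tfrac{1}{2}(q^{2}-q),\tfrac{1}{2}(q^{2}+q)\}$. Since the two complementary sets have sizes summing to $q^{2}$, exactly one of them has size $\tfrac{1}{2}(q^{2}-q)=\tfrac{1}{2}(3^{4e}-3^{2e})$; taking that one (relabelling $\cP_{1}$ and $\cP_{2}$ if needed) gives the claimed set.

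Because the two intersection numbers drop straight out of Table~\ref{tab:STDPL}, the construction itself is short, and I expect no serious obstacle. The only points needing care are confirming that both numbers are realised---so that one really has a two-intersection set rather than a degenerate one---and pinning down which of the two complementary sets has the advertised size $\tfrac{1}{2}(3^{4e}-3^{2e})$; both are handled by the elementary counting above.
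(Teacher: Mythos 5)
Your proof is correct and takes essentially the same route as the paper's own proof: fix a plane $\sigma \neq \pi$ with $\bm{p} \notin \sigma$, take $\cK = \cP_{1} \cap (\sigma \setminus (\sigma \cap \pi))$, read the two intersection numbers off Table~\ref{tab:STDPL}, and get the size from Lemma~\ref{lem:affmnk}. You additionally justify two points the paper's terse proof skips --- that both intersection numbers genuinely occur (via the constant-intersection counting argument) and which of the two roots of the quadratic is $|\cK|$ (via complementation, relabelling $\cP_{1}$ and $\cP_{2}$ if needed) --- and your intersection numbers $\tfrac{1}{2}(3^{2e} \pm 3^{e})$ agree with the abstract and Table~\ref{tab:STDPL} rather than the unhalved values appearing in the corollary's statement, which is evidently a typo in the paper.
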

\begin{proof}
If we take a plane $\tau \neq \pi$ that does not contain $\bm{r}$
then all points of the affine plane $\tau^{\prime} = \tau \setminus (\tau \cap \pi)$
lie in either $\cP_{1}$ or $\cP_{2}$, and all lines lie in either
$\cL_{1}$ or $\cL_{2}$.  Putting $\cK = \cP_{1} \cap \tau^{\prime}$,
the values in Table~\ref{tab:STDPL} give the intersection numbers of
the lines of the plane with the set $\cK$. The size of $\cK$ follows
from Lemma~\ref{lem:affmnk}.
\end{proof}

\section{Final remarks}

While finishing this manuscript, Koji Momihara, Tao Feng and Qing Xiang informed us that they had proven at almost the same
time, and independently, the existence of a Cameron-Liebler line class of $\mathrm{PG}(3,q)$ with parameter $\frac{q^2-1}{2}$
for $q \equiv 5 \mbox{ or } 9 \bmod{12}$, see \cite{MFX}. They became aware of our result through the availability of the abstracts of the conference
``Combinatorics 2014'', held in june in Gaeta, Italy, where our result was presented, and were so kind to inform us about their result. 
As their approach is slightly more algebraic, and our approach more geometric, we all decided that publishing both manuscripts 
independently from each other was justified.  
  
%\bibliography{CLFamilyCyc}

\begin{thebibliography}{10}

\bibitem{BKLP07}
J.~Bamberg, S.~Kelly, M.~Law, and T.~Penttila.
\newblock Tight sets and $m$-ovoids of finite polar spaces.
\newblock {\em Journal of Combinatorial Theory, Series A}, 114(7):1293--1314,
  2007.

\bibitem{BamPentt}
J.~Bamberg and T.~Penttila.
\newblock Overgroups of cyclic {S}ylow subgroups of linear groups.
\newblock {\em Communications in Algebra}, 36(7):2503--2543, 2008.

\bibitem{GaussSums}
B.C. Berndt, R.J. Evans, and K.S. Williams.
\newblock {\em Gauss and Jacobi sums}.
\newblock Wiley New York, 1998.

\bibitem{BM2013}
L.~Beukemann and K.~Metsch.
\newblock Small tight sets of hyperbolic quadrics.
\newblock {\em Des. Codes Cryptogr.}, 68(1-3):11--24, 2013.

\bibitem{BD1999}
A.A. Bruen and K.~Drudge.
\newblock The construction of {C}ameron--{L}iebler line classes in {${\rm PG}
  (3,q)$}.
\newblock {\em Finite Fields and Their Applications}, 5(1):35--45, 1999.

\bibitem{CL1982}
P.J. Cameron and R.A. Liebler.
\newblock Tactical decompositions and orbits of projective groups.
\newblock {\em Linear Algebra and its Applications}, 46:91--102, 1982.

\bibitem{BGHS2009}
J.~De~Beule, P.~Govaerts, A.~Hallez, and L.~Storme.
\newblock Tight sets, weighted $m$-covers, weighted $m$-ovoids, and minihypers.
\newblock {\em Designs, Codes and Cryptography}, 50(2):187--201, 2009.

\bibitem{BHS2008}
J.~De~Beule, A.~Hallez, and L.~Storme.
\newblock A non-existence result on {C}ameron--{L}iebler line classes.
\newblock {\em Journal of Combinatorial Designs}, 16(4):342--349, 2008.

\bibitem{D1999}
K.~Drudge.
\newblock On a conjecture of {C}ameron and {L}iebler.
\newblock {\em European Journal of Combinatorics}, 20(4):263--269, 1999.

\bibitem{GMCL4}
A.L. Gavrilyuk and I.Y. Mogilnykh.
\newblock {C}ameron--{L}iebler line classes in {${\rm PG}(n, 4)$}.
\newblock {\em Designs, Codes and Cryptography}, pages 1--14, 2013.

\bibitem{GPCL05}
P.~Govaerts and T.~Penttila.
\newblock Cameron--{L}iebler line classes in {${\rm PG}(3,4)$}.
\newblock {\em Bulletin of the Belgian Mathematical Society -- Simon Stevin},
  12(5):793--804, 2005.

\bibitem{GS2004}
P.~Govaerts and L.~Storme.
\newblock On {C}ameron--{L}iebler line classes.
\newblock {\em Advances in Geometry}, 4(3):279--286, 2004.

\bibitem{HaeEV}
W.H. Haemers.
\newblock Interlacing eigenvalues and graphs.
\newblock {\em Linear Algebra and its Applications}, 226--228(0):593 -- 616,
  1995.

\bibitem{HirschI}
J.~Hirschfeld.
\newblock {\em Projective Geometries over Finite Fields (Oxford Mathematical
  Monographs)}.
\newblock Oxford University Press, USA, 2 edition, 1998.

\bibitem{LidNied}
R.~Lidl and H.~Niederreiter.
\newblock {\em Finite Fields}.
\newblock Cambridge University Press, 1997.

\bibitem{M2010}
K.~Metsch.
\newblock The non-existence of {C}ameron--{L}iebler line classes with parameter
  {$2 < x \leq q$}.
\newblock {\em Bulletin of the London Mathematical Society}, 42(6):991--996,
  2010.

\bibitem{Metsch2014}
Klaus Metsch.
\newblock An improved bound on the existence of {C}ameron-{L}iebler line
  classes.
\newblock {\em J. Combin. Theory Ser. A}, 121:89--93, 2014.

\bibitem{MFX}
Tao Feng, Koji Momihara,  and Qing Xiang.
\newblock {C}ameron--{L}iebler line classes with parameter $x =
  \frac{q^2−1}{2}$.
\newblock preprint.

\bibitem{Payne1987}
S.~Payne.
\newblock Tight pointsets in finite generalized quadrangles.
\newblock {\em Congressus Numerantium}, 60:243--260, 1987.

\bibitem{Pentt1991}
T.~Penttila.
\newblock Cameron--{L}iebler line classes in {${\rm PG}(3,q)$}.
\newblock {\em Geometriae Dedicata}, 37(3):245--252, 1991.

\bibitem{PR1995}
T.~Penttila and G.F. Royle.
\newblock Sets of type {$(m, n)$} in the affine and projective planes of order
  nine.
\newblock {\em Designs, Codes and Cryptography}, 6(3):229--245, 1995.

\bibitem{thesis}
M.~Rodgers.
\newblock {\em On some new examples of {C}ameron--{L}iebler line classes}.
\newblock PhD thesis, University of Colorado Denver, 2012.

\bibitem{Rodgers}
M.~Rodgers.
\newblock {C}ameron--{L}iebler line classes.
\newblock {\em Designs, Codes and Cryptography}, 68(1--3):33--37, 2013.

\bibitem{GTee}
G.~Tee.
\newblock Eigenvectors of block circulant and alternating circulant matrices.
\newblock {\em New Zealand Journal of Mathematics}, 36:195--211, 2007.

\end{thebibliography}

\end{document}